\newcommand{\NSFThree}{NSF Grant DMS-1509652}
\newcommand{\Z}{{\mathbb  Z}}
\DeclareMathOperator{\Hom}{Hom}
\newcommand{\res}{res}
\newcommand{\tr}{tr}
\newcommand{\m}[1]{{\protect\underline{#1}}}
\newcommand{\mM}{\m{M}}
\newcommand{\mR}{\m{R}}
\newcommand{\mB}{\m{B}}
\newcommand{\mA}{\m{A}}
\newcommand{\cc}[1]{\mathcal #1}
\newcommand{\cO}{\cc{O}}
\newcommand{\cP}{\cc{P}}
\newcommand{\Set}{\mathcal Set}
\newcommand{\Ninfty}{N_\infty}
\newcommand{\Tamb}{\mathcal Tamb}
\newcommand{\OTamb}{\cO\mhyphen\Tamb}
\newcommand{\OpRes}{i_{\cO}^{\cO'}\!}
\mathchardef\mhyphen=45
\newtheorem{theorem}{Theorem}[section]
\newtheorem{lemma}[theorem]{Lemma}
\newtheorem{corollary}[theorem]{Corollary}
\newtheorem{definition}[theorem]{Definition}
\newtheorem{proposition}[theorem]{Proposition}
\newtheorem{remark}[theorem]{Remark}
\newtheorem{notation}[theorem]{Notation}
\newcommand{\defemph}[1]{\textbf{#1}}
\begin{document}

\title[Tambara Right Adjoint]{The right adjoint to the equivariant operadic forgetful functor on incomplete Tambara functors}

\author[A.~J.~Blumberg]{Andrew~J. Blumberg}
\address{University of Texas \\ Austin, TX 78712}
\email{blumberg@math.utexas.edu}
\thanks{A.~J.~Blumberg was supported in part by NSF grant DMS-1151577}

\author[M.~A.~Hill]{Michael~A. Hill}
\address{University of California Los Angeles\\ Los Angeles, CA 90095}
\email{mikehill@math.ucla.edu}
\thanks{M.~A.~Hill was supported in part by {\NSFThree}}

\begin{abstract}
For $N_\infty$ operads $\cO$ and $\cO'$ such that there is an
inclusion of the associated indexing systems, there is a forgetful
functor from incomplete Tambara functors over $\cO'$ to incomplete
Tambara functors over $\cO$.  Roughly speaking, this functor forgets
the norms in $\cO'$ that are not present in $\cO$.  The forgetful
functor has both a left and a right adjoint; the left adjoint is an
operadic tensor product, but the right adjoint is more mysterious.  We
explicitly compute the right adjoint for finite cyclic groups of prime
order.
\end{abstract}

\maketitle

The complexity of equivariant stable homotopy theory is encoded in the
structure of the additive and multiplicative transfers that connect
the stable homotopy groups $\pi_*\big((-)^H\big)$ as the subgroup $H \subset
G$ varies.  The compatibilities between transfers can be encoded
operadically, using equivariant generalizations of the classical
$E_\infty$ operad, referred to as $\Ninfty$ operads~\cite{BHNinfty}.
At one extreme is the non-equivariant $E_\infty$ operad regarded as a
trivial $G$-operad; algebras have a coherently
commutative multiplication.  At the other is a ``genuine''
$G$-$E_\infty$ operad; algebras have transfers for all pairs of
subgroups.   In the category of based $G$-spaces, $N_\infty$ operads
structure different kinds of infinite loop spaces (or equivalently
connective $G$-spectra); the operadic transfers are the usual stable transfers.
In the category of $G$-spectra, $N_\infty$ operads structure variants
of $E_\infty$ ring spectra; the operadic transfers are the
Hill-Hopkins-Ravenel multiplicative norms.  

The operadic transfers of an $N_\infty$ algebra give rise to
interesting algebraic structures; we focus on the case of $N_\infty$
algebras in spectra.  Homotopy groups of genuine equivariant spectra
assemble into Mackey functors, and there is a symmetric monoidal
abelian category of Mackey functors. The zeroth homotopy group of an
algebra over a trivial $E_\infty$ operad is a commutative monoid
object in Mackey functors: a Green functor. In contrast, Brun showed
that the zeroth homotopy group of an algebra over a $G$-$E_\infty$
operad is a Tambara functor~\cite{Brun}, which loosely speaking is a
Green functor together with multiplicative transfers. Building on this
work, we defined for any $\Ninfty$ operad $\cO$ an $\cO$-Tambara
functor, showing that the zeroth homotopy group of an $\cO$-algebra is
naturally an $\cO$-Tambara functor~\cite{BHOTamb}. Conceptually, an
$\cO$-Tambara functor is a Green functor together with the norms
parameterized by $\cO$.

One of the most interesting aspects of the classification of $\Ninfty$
operads is that it is essentially algebraic.  Specifically, the
homotopy category of $\Ninfty$ operads is very simple: it is a finite
poset \cite[Theorem 3.24]{BHNinfty}, \cite{Rubin}, \cite[Corollary IV]{BonventrePer}, \cite[Section 4]{GutierrezWhite}.  The point is that
$N_\infty$ operads are completely described by algebraic data
(``indexing systems'') which records exactly which norms are present.
As a consequence of the study of $\cO$-Tambara functors, we have an
equivalent reformulation which makes this poset easier to digest.

\begin{proposition}[{\cite[Theorem 3.18]{BHOTamb}}]
The homotopy category of $\Ninfty$ operads is equivalent to the poset
of pullback stable, finite coproduct complete, wide (containing all
the objects) subcategories of the category $\Set^G$ of finite
$G$-sets. 
\end{proposition}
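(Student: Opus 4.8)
The plan is to build on the classification of $\Ninfty$ operads by \emph{indexing systems} \cite[Theorem 3.24]{BHNinfty}: the homotopy category of $\Ninfty$ operads is equivalent to the poset of indexing systems, where an indexing system $\mathcal I$ is a sub-coefficient-system of the coefficient system $H \mapsto \Set^H$ of finite $H$-sets (so, closed under restriction to subgroups and conjugation) that contains the trivial sets and is closed under isomorphism, passage to subobjects, finite coproducts, Cartesian products, and the ``self-induction'' operation (if $H/K \in \mathcal I(H)$ and $T \in \mathcal I(K)$ then $H \times_K T \in \mathcal I(H)$). Granting this, it suffices to produce an order isomorphism between the poset of indexing systems and the poset of wide, pullback stable, finite coproduct complete subcategories $\ccD \subseteq \Set^G$.

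I would first send an indexing system $\mathcal I$ to the wide subcategory $\ccD_{\mathcal I} \subseteq \Set^G$ whose morphisms are the maps $f \colon X \to Y$ all of whose fibers are admissible --- that is, for each $y \in Y$ the fiber $f^{-1}(y)$, regarded as a $\Stab(y)$-set, lies in $\mathcal I(\Stab(y))$. The first nontrivial check is that $\ccD_{\mathcal I}$ is closed under composition, and this is exactly where self-induction is forced into the picture: given composable $f, g \in \ccD_{\mathcal I}$ and a point $z$ of the target, write $g^{-1}(z) \cong \coprod_j \Stab(z)/L_j$ using admissibility; then $(g \circ f)^{-1}(z) \cong \coprod_j \bigl(\Stab(z) \times_{L_j} f^{-1}(y_j)\bigr)$ for points $y_j$ with $\Stab(y_j) = L_j$, and one concludes using closure under subobjects (for the orbits $\Stab(z)/L_j$), closure under restriction (so $f^{-1}(y_j) \in \mathcal I(L_j)$), self-induction, and closure under coproducts. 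Pullback stability is then immediate, since the fibers of a base change of $f$ are restrictions to subgroups of fibers of $f$; and finite coproduct completeness is immediate, since the fibers of a coproduct of maps, of a coproduct inclusion, and of a fold map are respectively fibers of the summands, one-point or empty sets, and trivial sets --- all admissible.

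In the other direction I would send a subcategory $\ccD$ to the indexing system $\mathcal I_{\ccD}$ declaring an $H$-set $T$ admissible precisely when the projection $G \times_H T \to G/H$ lies in $\ccD$. Each axiom for $\mathcal I_{\ccD}$ then unwinds into one of the conditions on $\ccD$: closure under restriction and conjugation follows from pullback stability (via the equivalence between $G$-sets over $G/K$ and $K$-sets); closure under subobjects follows from finite coproduct completeness (factor $G \times_H T' \to G \times_H T \to G/H$ through a summand inclusion); closure under Cartesian products follows from pullback stability via $G \times_H (T_1 \times T_2) \cong (G \times_H T_1) \times_{G/H} (G \times_H T_2)$; self-induction follows from closure under composition via the factorization $G \times_K T \to G/K \to G/H$, once one observes that ``$H/K \in \mathcal I_{\ccD}(H)$'' is literally the assertion that $G/K \to G/H$ lies in $\ccD$; and the trivial sets are admissible because $(G/H)^{\sqcup n} \to G/H$ is a fold map, which $\ccD$ contains by finite coproduct completeness.

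Finally I would verify that these two assignments are mutually inverse and order preserving. That $\mathcal I_{\ccD_{\mathcal I}} = \mathcal I$ is straightforward: the fiber of $G \times_H T \to G/H$ over the identity coset is $T$, and the remaining fibers are its conjugates, which are admissible iff $T$ is, since $\mathcal I$ is closed under conjugation. For $\ccD_{\mathcal I_{\ccD}} = \ccD$, decompose an arbitrary $f \colon X \to Y$ along the orbit decomposition $Y = \coprod_i G/H_i$ as $f = \coprod_i f_i$ with $f_i \colon G \times_{H_i} F_i \to G/H_i$, $F_i$ the fiber over a chosen point; restricting to a summand of $Y$ (pullback stability) and reassembling (finite coproduct completeness) shows $f \in \ccD$ iff every $f_i \in \ccD$ iff every $F_i$ is admissible in $\mathcal I_{\ccD}$ iff $f \in \ccD_{\mathcal I_{\ccD}}$. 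Both directions are clearly monotone for inclusion. I expect the bulk of the real work to be pinning down the axiom dictionary precisely --- in particular the equivalence between self-induction and closure under composition, and the fact that membership in an arbitrary $\ccD$ is detected one fiber at a time, which is precisely the place where pullback stability and finite coproduct completeness have to be used in tandem.
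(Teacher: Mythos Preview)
The paper does not actually prove this proposition: it is stated in the introduction as a citation of \cite[Theorem 3.18]{BHOTamb} and is used thereafter as background, so there is no ``paper's own proof'' to compare your attempt against. What you have written is a reasonable sketch of the argument one would expect to find in the cited reference, namely reducing to the indexing-system classification of \cite[Theorem 3.24]{BHNinfty} and then setting up the dictionary between indexing systems and wide, pullback-stable, finite-coproduct-complete subcategories of $\Set^G$.

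Your outline is essentially sound. The two nontrivial points you correctly identify are (i) that self-induction on the indexing-system side corresponds to closure under composition on the subcategory side, and (ii) that membership of an arbitrary map in $\ccD$ can be tested fiberwise, which is exactly where pullback stability and finite coproduct completeness interact. One small omission: in checking that $\mathcal I_{\ccD}$ is an indexing system you verify subobjects, restriction/conjugation, products, trivial sets, and self-induction, but you do not explicitly address closure under finite coproducts. This is easy --- $G\times_H(T_1\amalg T_2)\to G/H$ is $[f_1,f_2]$ for the two admissible maps $f_i\colon G\times_H T_i\to G/H$, and finite coproduct completeness of $\ccD$ (in the sense that it has finite coproducts computed as in $\Set^G$) gives exactly that such copairings lie in $\ccD$ --- but it should be stated. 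You might also be explicit about what ``finite coproduct complete'' entails for a wide subcategory (summand inclusions, fold maps, and copairings of $\ccD$-maps all lie in $\ccD$), since several of your steps use different pieces of this.
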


Because of this equivalence, we find it convenient to elide the
distinction between the operad and the subcategory of $\Set^G$. In
particular, we abuse notation by using symbols like inclusions for
operads and conversely referring to subcategories as ``operads''.

As we vary the operad, there are evident relationships between the
categories of algebras.  The construction of the forgetful functor is
clear.

\begin{definition}
If $\cO\subset\cO'$ is a pair of $\Ninfty$ operads, then there is an operadic forgetful functor
\[
{\OpRes}\colon\cO'\mhyphen\Tamb\to\OTamb
\]
which ``forgets the norms in $\cO'$ which are not in $\cO$''.
\end{definition}

Moreover, this forgetful functor participates in two adjunctions.

\begin{proposition}[{\cite[Proposition 5.16]{BHOTamb}}]
The forgetful functor ${\OpRes}$ has both left and right
adjoints:
\begin{center}
\begin{tikzcd}
\cO'\mhyphen\Tamb
	\arrow[r, "{{\OpRes}}" description]
&
\OTamb
	\arrow[l, "{\cO'\otimes_{\cO}(-)}"', bend right]
	\arrow[l, "{F_{\cO}(\cO',-)}", bend left]
\end{tikzcd}
\end{center}
\end{proposition}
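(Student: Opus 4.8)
The plan is to exhibit $\OpRes$ as a restriction functor between categories of product-preserving functors and then obtain the adjoints by tracking which (co)limits it preserves. First I would recall from \cite{BHOTamb} that for an $\Ninfty$ operad $\cO$ the category $\OTamb$ is equivalent to the category of product-preserving functors $\cP_\cO \to \Set$, where $\cP_\cO$ is the ``bispan'' (polynomial) category whose objects are the finite $G$-sets, whose morphisms are bispans $X \leftarrow A \to B \to Y$ with $\cO$-admissible exponential leg $A \to B$, and in which disjoint union of $G$-sets is the categorical product. An inclusion $\cO \subset \cO'$ induces an identity-on-objects, faithful, product-preserving functor $\iota\colon \cP_\cO \to \cP_{\cO'}$, and under these identifications $\OpRes$ is exactly restriction $\iota^*$ along $\iota$. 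Both $\OTamb$ and $\cO'\mhyphen\Tamb$ are categories of models of small finitary algebraic theories, hence locally presentable, and $\iota^*$ preserves sifted colimits (filtered colimits and reflexive coequalizers) because these are computed pointwise in $\Set$ on both sides; in particular $\iota^*$ is accessible.

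For the left adjoint, limits in $\OTamb$ and $\cO'\mhyphen\Tamb$ are likewise computed pointwise, and restricting pointwise data along a subcategory inclusion preserves limits, so $\iota^*$ is accessible and preserves all limits; the adjoint functor theorem then yields a left adjoint. (Equivalently, one can produce it directly as base change along the morphism of algebraic theories $\iota$, e.g.\ left Kan extension along $\iota$ followed by reflection onto product-preserving functors.) One identifies it with the operadic tensor product $\cO' \otimes_\cO (-)$.

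For the right adjoint it suffices, again by the adjoint functor theorem, to show $\iota^*$ preserves all small colimits. As every colimit is built from sifted colimits and coproducts, and $\iota^*$ already preserves sifted colimits, it remains only to show $\iota^*$ preserves coproducts --- and this is the one genuinely non-formal point, since coproducts of Tambara functors (whether over $\cO$ or over $\cO'$) are not computed pointwise. I would identify the coproduct of Tambara functors with the box product $\square$ equipped with its canonically induced norms: the argument that $S \square T$, with componentwise multiplication and the norms induced from those of $S$ and $T$, satisfies the universal property of the coproduct parallels the classical fact that $\otimes_\Z$ is the coproduct of commutative rings, with the Tambara exponential (distributivity) axioms forcing the norm of an arbitrary element to be determined by the two evident families of generators. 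Crucially, the underlying Green functor of $S \square T$ is the Mackey-level box product, which is independent of the operad, and its $\cO$-norms are precisely the $\cO$-admissible ones among its $\cO'$-norms; hence $\iota^*(S \square_{\cO'} T) \cong (\iota^* S) \square_\cO (\iota^* T)$ compatibly with the coproduct maps, so $\iota^*$ preserves binary and therefore all small coproducts. Combined with the preceding cases, $\iota^*$ preserves all colimits and so admits a right adjoint $F_\cO(\cO', -)$.

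The main obstacle is precisely this last step: constructing the box product of incomplete Tambara functors, verifying it computes the coproduct, and verifying that forgetting norms commutes with it. That the sub-operad structure is essential here --- rather than a formality --- is already visible in the classical analogue, where the forgetful functor from commutative rings to abelian groups carries the coproduct $\otimes_\Z$ to a non-coproduct and hence has no right adjoint.
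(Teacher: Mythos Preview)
This proposition is not proved in the present paper; it appears in the introduction as a citation of \cite[Proposition 5.16]{BHOTamb}, and the paper takes the existence of both adjoints as established background. There is therefore no proof here to compare your attempt against.

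For what it is worth, your strategy is sound. Realizing $\OpRes$ as restriction along an identity-on-objects, product-preserving functor between polynomial categories, local presentability and limit preservation give the left adjoint formally; for the right adjoint, reducing to preservation of sifted colimits and finite coproducts and identifying the latter with the box product is the correct move, and the point you flag as non-formal---that the box product of incomplete Tambara functors carries a canonical Tambara structure making it the coproduct, with underlying Green functor independent of the operad---is exactly where the substantive content lies and is what the cited reference establishes. It is also worth noting that the explicit formula for $F_{\cO}(\cO'\!,\m{R})(T)$ derived later in this paper (as $\cO$-Tambara maps out of $\OpRes\mA^{\cO'}\![x_T]$, with structure maps dual to the co-Tambara structure on the free objects) furnishes an alternative, constructive route to the right adjoint: one can take that formula as a definition and verify the adjunction directly from the Yoneda isomorphism, bypassing the adjoint functor theorem entirely.
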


The left adjoint $\cO'\otimes_{\cO}(-)$ is a familiar construction; it
can be described as an operadic tensor product.  Moreover, it is
conceptually simple: we formally put in any of the norms we are
missing.  The right adjoint is much stranger.  

Since Tambara functors are not yet a familiar object to many
topologists, our goal in this paper is to illustrate several aspects
of computation with them by working out explicit formulas describing
the right adjoint to the operadic forgetful functor ${\OpRes}$
from $\cO'$-Tambara functors to $\cO$-Tambara functors for $G=C_2$.

The strategy of analysis is formal: there are enough ``free''
functors, and this gives a canonical co-$\cO$-Tambara functor object
in $\cO$-Tambara functors. Maps out of this co-$\cO$-Tambara functor
to an $\cO$-Tambara functor $\m{R}$ then canonically recovers $\m{R}$
as an $\cO$-Tambara functor.

We begin in Section~\ref{sec:crash} with a quick review of incomplete
Tambara functors.  In Section~\ref{sec:freeab} we study the free
incomplete Tambara functor for any group $G$; we specialize to the
case $G = C_2$ in Section~\ref{sec:freeC2}.  Finally, we give explicit
formulas for the operadic right adjoint to the forgetful functor in
Section~\ref{sec:main}.

\section{A crash course in \texorpdfstring{$\cO$}{O}-Tambara functors}\label{sec:crash}

In this section, we quickly review the algebraic characterization of
incomplete Tambara functors we introduced in~\cite{BHOTamb}.  Let
$\cO$ be a wide, pullback stable, finite coproduct complete
subcategory of $\Set^G$.  We can associate to such a category a new
category of ``polynomials with exponents in $\cO$''.  

\begin{definition}\label{defn:Polynomials}
Let $\cP^G_{\cO}$ denote the category of polynomials with exponents in $\cO$ in $G$-sets. The objects are finite $G$-sets, and the morphisms are isomorphism classes of diagrams
\[
S\xleftarrow{f} U\xrightarrow{g} V\xrightarrow{h} T,
\]
where $g\in\cO$ and where two such diagrams are isomorphic if we have a commutative diagram
\[
\xymatrix@R=.1\baselineskip{
{} & {U'}\ar[r]^{g'}\ar[dd]_{\cong}\ar[ld]_{f'} & {V'}\ar[dd]^{\cong}\ar[rd]^{h'} & {} \\
{S} & {} & {} & {T.} \\
{} & {U}\ar[r]_{g}\ar[lu]^{f} & {V}\ar[ru]_{h} & {}
}
\]
\end{definition}

When $\cO$ is simply all of $\Set^G$, the category of polynomials with
exponent in $\cO$ is precisely the category of Tambara functors.  When
$\cO$ is a proper subcategory, the fact that construction of
$\cP^G_{\cO}$ actually forms a category is not immediately obvious; we
now explain the composition, giving a digest form of~\cite[Section
  2.2]{BHOTamb}.  To describe the composition, it is useful to isolate
a generating collection of morphisms.

\begin{definition}\label{defn:TNR}
Let $f\colon S\to T$ be a map of finite $G$-sets. Then let
\begin{align*}
R_f&:=[T\xleftarrow{f} S\xrightarrow{=} S\xrightarrow{=}S] \\
N_f&:=[S\xleftarrow{=} S\xrightarrow{f} T\xrightarrow{=}T] \\
T_f&:=[S\xleftarrow{=} S\xrightarrow{=} S\xrightarrow{f} T]
\end{align*}
\end{definition}

Any polynomial with exponent in $\Set^G$ can be written as a composite
of these basic generators: 
\[
T_h\circ N_g\circ R_f=[S\xleftarrow{f} U\xrightarrow{g} V\xrightarrow{h} T].
\]
(This decomposition motivated Tambara's initial name for these as TNR
functors~\cite{Tambara}.)

Furthermore, we have a series of commutation relations that allow us
to turn any composite into one of this form.

\begin{proposition}
$R$ gives a contravariant functor from $\Set^G$ into $\cP^G_{\cO}$. $N$ gives a covariant functor from $\cO$ to $\cP^G_{\cO}$, and $T$ give covariant functor from $\Set^G$.
\end{proposition}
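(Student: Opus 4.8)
The plan is to check directly that $R$, $N$, and $T$ each preserve identities and composition, reading off the composition law of $\cP^G_\cO$ recalled in \cite[Section~2.2]{BHOTamb}. Preservation of identities is immediate: for a finite $G$-set $S$ the diagrams $R_{\id_S}$, $N_{\id_S}$, and $T_{\id_S}$ are all literally the diagram $[S\xleftarrow{=}S\xrightarrow{=}S\xrightarrow{=}S]$, which is the identity of $S$ in $\cP^G_\cO$; note $\id_S\in\cO$ since $\cO$ is wide, so $N_{\id_S}$ is legitimate.

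For composition the key observation is that in each of the three families we are composing ``degenerate'' polynomials in which at most one of the three structural maps is not an identity. Consequently the distributivity (exponential) step in the composition recipe is taken over an identity map and collapses, and the composite is computed by a single pullback against an identity map, that is, by nothing. Explicitly, for composable $f\colon S\to T$ and $g\colon T\to W$ in $\Set^G$, unwinding the composition formula gives
\[
R_f\circ R_g=[T\xleftarrow{f}S\xrightarrow{=}S\xrightarrow{=}S]\circ[W\xleftarrow{g}T\xrightarrow{=}T\xrightarrow{=}T]=R_{g\circ f},
\]
which is the contravariance of $R$, and dually $T_g\circ T_f=T_{g\circ f}$, which is the covariance of $T$; for $R$ and $T$ there is no side condition to verify, since a polynomial with identity exponent automatically lies in $\cP^G_\cO$. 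For composable $g\colon S\to T$ and $g'\colon T\to W$ in $\cO$, the same degeneration gives $N_{g'}\circ N_g=[S\xrightarrow{=}S\xrightarrow{g'\circ g}W\xrightarrow{=}W]=N_{g'\circ g}$; this is a morphism of $\cP^G_\cO$ precisely because $\cO$ is a subcategory and hence closed under composition, so $g'\circ g\in\cO$.

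I do not expect a genuine obstacle here; the work is entirely bookkeeping. The two points needing care are the variance conventions --- a polynomial $[S\leftarrow U\to V\to T]$ is a morphism $S\to T$, so $R_f$ reverses the direction of $f$ while $N_f$ and $T_f$ keep it --- and unwinding the composition formula of \cite[Section~2.2]{BHOTamb} far enough to see that it reduces to an identity pullback in each degenerate case. Alternatively, one may note that forgetting the constraint ``$g\in\cO$'' yields a faithful functor $\cP^G_\cO\to\cP^G_{\Set^G}$ (the compatibility of the two composition laws being built into the construction of \cite[Section~2.2]{BHOTamb}) carrying $R_f$, $T_f$, and $N_g$ to the classical generators; the three functoriality statements then descend from Tambara's original relations for TNR functors \cite{Tambara}.
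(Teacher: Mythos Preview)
Your proposal is correct. The paper itself offers no proof of this proposition: it is stated as part of the ``crash course'' recalling material from \cite[Section~2.2]{BHOTamb} and \cite{Tambara}, so your direct verification via the composition law (and the alternative reduction to Tambara's original relations) simply fills in what the paper leaves implicit.
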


\begin{proposition}
If we have a pullback diagram of finite $G$-sets
\[
\xymatrix{
{S'}\ar[r]^{f'}\ar[d]_{g'} & {T'}\ar[d]^{g} \\
{S}\ar[r]_{f} & {T,}
}
\]
then we have
\[
R_g\circ N_f=N_{f'}\circ R_{g'}\text{ and }R_g\circ T_f=T_{f'}\circ R_{g'}.
\]
\end{proposition}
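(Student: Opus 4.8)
The plan is to unwind the composition law in $\cP^G_{\cO}$ and exhibit both sides of each identity as one and the same isomorphism class of bispan. Write $S' = S \times_T T'$ with $g' \colon S' \to S$ and $f' \colon S' \to T'$ the projections; since the square is a pullback, $f'$ is a base change of $f$, hence lies in $\cO$ by pullback stability, while all the identity maps appearing below lie in $\cO$ by wideness. Thus $[S\xleftarrow{g'} S'\xrightarrow{f'} T'\xrightarrow{=} T']$ and $[S\xleftarrow{g'} S'\xrightarrow{=} S'\xrightarrow{f'} T']$ are legitimate morphisms $S \to T'$ in $\cP^G_{\cO}$. The right-hand sides of the two identities are then immediate from the decomposition $T_h\circ N_g\circ R_f=[S\xleftarrow{f} U\xrightarrow{g} V\xrightarrow{h} T]$: taking $(f,g,h) = (g',f',\id)$ and using $T_{\id}=\id$ (functoriality of $T$) gives $N_{f'}\circ R_{g'}=[S\xleftarrow{g'} S'\xrightarrow{f'} T'\xrightarrow{=} T']$, while taking $(f,g,h)=(g',\id,f')$ and using $N_{\id}=\id$ (functoriality of $N$) gives $T_{f'}\circ R_{g'}=[S\xleftarrow{g'} S'\xrightarrow{=} S'\xrightarrow{f'} T']$.

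It remains to compute the left-hand sides directly from the composition recipe. Here the key observation is that $R_g=[T\xleftarrow{g} T'\xrightarrow{=} T'\xrightarrow{=} T']$ has identity norm- and transfer-legs, so composing any bispan $[S\xleftarrow{a} U\xrightarrow{b} V\xrightarrow{c} T]$ with $R_g$ on the left triggers no exponential (distributivity) step and is simply base change along $g$: one forms the pullback of $c$ along $g$, then the pullback of $b$ along the resulting map $V\times_T T' \to V$, and then postcomposes the new source leg with $a$, obtaining $[S\leftarrow U\times_V(V\times_T T')\to V\times_T T'\to T']$. Feeding in $N_f=[S\xleftarrow{=} S\xrightarrow{f} T\xrightarrow{=} T]$, the first pullback is trivial ($T\times_T T'\cong T'$) and the second is exactly the given square, so $R_g\circ N_f=[S\xleftarrow{g'} S'\xrightarrow{f'} T'\xrightarrow{=} T']$; feeding in $T_f=[S\xleftarrow{=} S\xrightarrow{=} S\xrightarrow{f} T]$, the first pullback is the given square and the second is trivial, so $R_g\circ T_f=[S\xleftarrow{g'} S'\xrightarrow{=} S'\xrightarrow{f'} T']$. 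Since morphisms in $\cP^G_{\cO}$ are isomorphism classes of diagrams and pullbacks are unique up to isomorphism, these coincide with the right-hand sides computed above.

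The one thing that needs care is the parenthetical claim that left composition with a pure restriction $R_g$ degenerates to iterated pullback. The general composition rule of $\cP^G_{\cO}$ (the digest of \cite[Section 2.2]{BHOTamb}) involves an exponential/dependent-product diagram, and I would verify that when the left factor has trivial norm- and transfer-legs this diagram collapses to the two pullbacks described above --- equivalently, that restriction commutes with transfers and with norms in the expected Beck--Chevalley fashion. Granting this, the proposition is a short diagram chase resting only on functoriality of $R$, $N$, $T$, the decomposition lemma, and pullback stability and wideness of $\cO$.
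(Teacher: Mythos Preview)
The paper does not actually prove this proposition: it appears in the ``crash course'' section as a review statement, with only the remark afterward that pullback stability of $\cO$ is what guarantees $f'\in\cO$. Your argument is correct and supplies the details the paper omits; the approach of writing both sides as explicit bispans via the $T_h\circ N_g\circ R_f$ decomposition and checking they coincide is exactly the standard one.

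Your closing caveat is warranted but easily discharged. In the general composition rule, composing with $R_g=[T\xleftarrow{g}T'\xrightarrow{=}T'\xrightarrow{=}T']$ on the left means one must pass $N_{=}$ and $T_{=}$ across the right factor. Since $N$ and $T$ are functorial and send identities to identities, the exponential diagram for $N_{\id}$ is the identity (the dependent product $\prod_{\id}$ is the identity functor on the slice), and the Beck--Chevalley step for $T_{\id}$ is likewise trivial. What remains is precisely the two pullbacks you describe: first pull the transfer leg of the right factor back along $g$, then pull the norm leg back along the resulting map. So the ``degeneration to iterated pullback'' is not an extra assumption but a consequence of functoriality already recorded in the paper.
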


Here is where the pullback stability of $\cO$ first appears: since $\cO$ is assumed to be pullback stable, $f$ being in $\cO$ implies that its pullback $f'$ also is.

The interchange of $N$ and $T$ is trickier. Recall that if $f\colon S\to T$ is a map of finite $G$-sets, then the pullback functor
\[
f^\ast\colon\Set^G_{\downarrow T}\to\Set^G_{\downarrow S}
\]
has a right adjoint: the dependent product $\prod_{f}$. Heuristically, this is the product of the fibers over the preimages of $f$, though we will never need the actual form.

\begin{definition}
An exponential diagram in $\Set^G$ is a diagram (isomorphic to one) of the form
\[
\xymatrix{
{S}\ar[d]_{h} & {A}\ar[l]_g & {S\times_{T}\prod_h A}\ar[l]_-{f'}\ar[d]^{g'} \\
{T} & & {\prod_h A.}\ar[ll]^{h'}
}
\]
\end{definition}

\begin{proposition}\label{prop:NormofTransfer}
If we have an exponential diagram
\[
\xymatrix{
{S}\ar[d]_{g} & {A}\ar[l]_h & {S\times_{T}\prod_g A}\ar[l]_-{f'}\ar[d]^{g'} \\
{T} & & {\prod_g A,}\ar[ll]^{h'}
}
\]
then 
\[
N_g\circ T_h=T_{h'}\circ N_{g'}\circ R_{f'}.
\]
\end{proposition}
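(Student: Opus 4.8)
The plan is to pass to the model of $\cP^G_\cO$ by polynomial functors, in which the three generators $R$, $N$, $T$ become the three basic operations on slice categories. Recall from \cite[Section~2.2]{BHOTamb} that a morphism $S\to T$ of $\cP^G_\cO$ represented by $S\xleftarrow{f}U\xrightarrow{g}V\xrightarrow{h}T$ (with $g\in\cO$) is the same datum as the polynomial functor
\[
\Set^G_{\downarrow S}\xrightarrow{f^\ast}\Set^G_{\downarrow U}\xrightarrow{\prod_g}\Set^G_{\downarrow V}\xrightarrow{h_!}\Set^G_{\downarrow T};
\]
that two representing diagrams are isomorphic precisely when the associated functors are naturally isomorphic; and that composition of morphisms corresponds to composition of functors — the fact that such a composite is again representable by a diagram with middle leg in $\cO$ being exactly where pullback stability and finite coproduct completeness of $\cO$ enter. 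Under this dictionary $R_f\leftrightarrow f^\ast$, $T_f\leftrightarrow f_!$, and $N_g\leftrightarrow\prod_g$.

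Thus the asserted identity is equivalent to a natural isomorphism of functors $\Set^G_{\downarrow A}\to\Set^G_{\downarrow T}$,
\[
\prod_g\circ h_!\;\cong\; h'_!\circ\prod_{g'}\circ(f')^\ast,
\]
and I would produce the comparison as the canonical mate. The square whose two horizontal legs are $g'$ and $h'$ and whose two vertical legs are $g$ and $h\circ f'$ is a pullback — this is precisely what the object written $S\times_T\prod_g A$ denotes — so Beck--Chevalley gives $g^\ast\circ h'_!\cong h_!\circ f'_!\circ(g')^\ast$. Composing with the counit of $(g')^\ast\dashv\prod_{g'}$ and the counit of $f'_!\dashv(f')^\ast$ yields a natural transformation $g^\ast\circ h'_!\circ\prod_{g'}\circ(f')^\ast\to h_!$, whose adjunct under $g^\ast\dashv\prod_g$ is the comparison map in the display above. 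That this map is an isomorphism is a local assertion: it may be checked after base change along each orbit $G/H\to T$, where it collapses to the elementary distributive law identifying a product of fibrewise disjoint unions with the disjoint union, indexed by choice functions, of the corresponding products — carried out $H$-equivariantly. This is precisely Tambara's distributivity \cite{Tambara}; working throughout in the presheaf topos $\Set^G$, which is locally cartesian closed and extensive, is what keeps the bookkeeping of $G$-actions implicit.

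The one genuinely new point relative to \cite{Tambara} is the incompleteness: one must check that every arrow on the right-hand side is admissible in $\cP^G_\cO$. Here $g\in\cO$ by hypothesis; $g'$ is a base change of $g$ along $h'$, so it lies in $\cO$ by pullback stability; and $R_{f'}$ and $T_{h'}$ require no membership hypothesis at all. So both sides genuinely live in $\cP^G_\cO$, and since a morphism of $\cP^G_\cO$ is determined by its underlying morphism of $\cP^G_{\Set^G}$, it suffices to prove the identity in $\cP^G_{\Set^G}$ — which is what the mate argument does. A more hands-on alternative would be to expand $N_g\circ T_h$ directly via the explicit formula for the composite of two polynomials and match it termwise against the normal form $T_{h'}\circ N_{g'}\circ R_{f'}$; this is workable but considerably more error-prone, the real content in either route — and the main obstacle — being the verification of the equivariant distributive law.
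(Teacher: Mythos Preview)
The paper does not prove this proposition: it appears in the review Section~\ref{sec:crash} as background imported from \cite{Tambara} and \cite{BHOTamb}, and is stated without argument. The only remark the paper offers is the sentence immediately following the statement, that pullback stability of $\cO$ guarantees the subscript of every $N$ on the right-hand side lies in $\cO$ --- which is exactly the admissibility check you carry out in your final paragraph.

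Your argument via the polynomial-functor model and the mate/Beck--Chevalley construction is correct, and is essentially the modern conceptual proof of Tambara's distributive law. The mate you build is right: Beck--Chevalley on the pullback square with legs $g,h',g',p$ (where $p=h\circ f'$ is the projection $S\times_T\prod_gA\to S$) gives $g^\ast h'_!\cong h_!f'_!(g')^\ast$, and the two counits collapse this to the desired comparison, whose invertibility is indeed local over $T$ and reduces to the ordinary distributive law in a presheaf topos. One point of logical hygiene worth noting: in the paper's presentation (following Tambara), composition in $\cP^G_\cO$ is \emph{defined} combinatorially on bispans, and the exponential relation functions more as an ingredient of that definition than as a theorem derived from it. Your argument is therefore best read as verifying that the combinatorial composition agrees with composition of the associated polynomial functors on this pair of generators --- which is genuine content, and your proof establishes exactly that $\prod_g\circ h_!$ has normal form $h'_!\circ\prod_{g'}\circ(f')^\ast$. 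So there is no circularity, but the framing ``it suffices to prove the identity in $\cP^G_{\Set^G}$'' is slightly glib unless you have already established the equivalence of the two models; citing \cite[Section~2.2]{BHOTamb} for that, as you do, is the right move.
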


Again, pullback stability of $\cO$ is all that is needed to guarantee that the subscripts of any maps $N$ are all drawn from $\cO$. 

\begin{proposition}[{\cite[Proposition 2.12]{BHOTamb}}]
Disjoint union of finite $G$-sets is the categorical product in $\cP^G_{\cO}$.
\end{proposition}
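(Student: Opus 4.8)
The plan is to prove that $S_1\sqcup S_2$, equipped with the ``restriction along the summand inclusions'' morphisms, is the categorical product of $S_1$ and $S_2$ in $\cP^G_{\cO}$; the case of an arbitrary finite disjoint union then follows by the usual induction on the number of summands, the empty disjoint union being terminal (any polynomial with target $\emptyset$ is forced to have $U=V=\emptyset$, so $\Hom_{\cP^G_{\cO}}(X,\emptyset)$ is a single point). So write $i_j\colon S_j\hookrightarrow S_1\sqcup S_2$ for the summand inclusions, set $\pi_j:=R_{i_j}\colon S_1\sqcup S_2\to S_j$, and fix a finite $G$-set $X$. It suffices to show that the map
\begin{align*}
\Psi\colon\Hom_{\cP^G_{\cO}}(X,S_1\sqcup S_2)&\longrightarrow \Hom_{\cP^G_{\cO}}(X,S_1)\times\Hom_{\cP^G_{\cO}}(X,S_2),\\
\gamma&\longmapsto(\pi_1\circ\gamma,\,\pi_2\circ\gamma)
\end{align*}
is a bijection, since this is precisely the defining universal property of the product.

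The single computation that matters is this: for any polynomial $\gamma=[X\xleftarrow{f}U\xrightarrow{g}V\xrightarrow{h}S_1\sqcup S_2]$, the composite $\pi_1\circ\gamma$ equals the restricted polynomial $[X\xleftarrow{f_1}U_1\xrightarrow{g_1}V_1\xrightarrow{h_1}S_1]$, where $V_1:=h^{-1}(S_1)$, $U_1:=g^{-1}(V_1)$, and $f_1,g_1,h_1$ are the restrictions of $f,g,h$ (and symmetrically for $\pi_2$). To prove this I would write $\gamma=T_h\circ N_g\circ R_f$ using the decomposition recalled in Section~\ref{sec:crash}, commute $R_{i_1}$ past $T_h$ along the pullback square with upper-left corner $V_1=h^{-1}(S_1)$, then commute the resulting restriction past $N_g$ along the pullback square with upper-left corner $U_1=g^{-1}(V_1)$ (the map $g_1$ lying in $\cO$ by pullback stability of $\cO$), and finally absorb the leftover restriction into $R_f$ by the contravariant functoriality of $R$; the net result is $T_{h_1}\circ N_{g_1}\circ R_{f_1}$, as claimed.

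Granting this, the rest is formal. Define, for polynomials $\alpha=[X\xleftarrow{f_1}U_1\xrightarrow{g_1}V_1\xrightarrow{h_1}S_1]$ and $\beta=[X\xleftarrow{f_2}U_2\xrightarrow{g_2}V_2\xrightarrow{h_2}S_2]$, the ``gluing along the target''
\[
\Phi(\alpha,\beta):=[X\xleftarrow{f_1\sqcup f_2}U_1\sqcup U_2\xrightarrow{g_1\sqcup g_2}V_1\sqcup V_2\xrightarrow{h_1\sqcup h_2}S_1\sqcup S_2],
\]
which lands in $\cP^G_{\cO}$ because $\cO$ is finite coproduct complete, so $g_1\sqcup g_2\in\cO$; it is manifestly well defined on isomorphism classes. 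Then $\Psi\Phi=\id$ is immediate from the identity of the previous paragraph applied to $\gamma=\Phi(\alpha,\beta)$, since restricting a glued target diagram along $i_j$ returns its $j$th factor on the nose. For $\Phi\Psi=\id$, observe that an arbitrary $\gamma$ with target $S_1\sqcup S_2$ equals $\Phi$ of its two restrictions, via the canonical splittings $V\cong h^{-1}(S_1)\sqcup h^{-1}(S_2)$ over $S_1\sqcup S_2$ and $U\cong g^{-1}(h^{-1}(S_1))\sqcup g^{-1}(h^{-1}(S_2))$ over $V$. Hence $\Psi$ is a bijection and $(S_1\sqcup S_2,\pi_1,\pi_2)$ is the product.

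I expect the only genuine obstacle to be the bookkeeping in that computation: one must check that the two squares invoked really are pullbacks in $\Set^G$ and that the commutation relations of Section~\ref{sec:crash} (restriction past transfer, restriction past norm), together with the contravariant functoriality of $R$, compose to give $\alpha$ exactly rather than merely up to isomorphism. Everything else — the summand decompositions of $U$ and $V$, and the closure of $\cO$ under the pullbacks and coproducts used — follows immediately from the standing hypotheses that $\cO$ is wide, pullback stable, and finite coproduct complete.
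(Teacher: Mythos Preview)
Your argument is correct. The paper does not actually prove this proposition; it merely cites \cite[Proposition 2.12]{BHOTamb}, so there is no in-text proof to compare against. Your direct verification of the universal property---commuting $R_{i_j}$ past $T_h$ and $N_g$ along the evident pullback squares to identify $\pi_j\circ\gamma$ with the ``restricted'' polynomial, then exhibiting the gluing map $\Phi$ as an explicit inverse---is exactly the standard argument one would expect, and the hypotheses on $\cO$ (pullback stability for $g_j\in\cO$, finite coproduct completeness for $g_1\sqcup g_2\in\cO$) are invoked at precisely the right places. The only cosmetic point is that your closing paragraph is overly cautious: the two squares really are pullbacks in $\Set^G$ on the nose (preimage along a summand inclusion), and since morphisms in $\cP^G_{\cO}$ are already isomorphism classes of diagrams, ``equal up to isomorphism'' is the same as ``equal'', so there is no residual bookkeeping obstacle.
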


\begin{definition}[{\cite[Definition 4.1]{BHOTamb}}]\label{def:OTamb}
An $\cO$-semi-Tambara functor is a product preserving functor
$\cP^G_{\cO}\to\Set$. An $\cO$-Tambara functor is an
$\cO$-semi-Tambara functor that whose value at each finite $G$-set is
an abelian group. 
\end{definition}

Following Tambara's original argument, the category of $\cO$-Tambara
functors as described in Definition~\ref{def:OTamb} is equivalent to
the category of additive functors to abelian groups from the
``additive completion'' of $\cP^G_{\cO}$, i.e., the category obtained
by group-completing the hom monoids of $\cP^G_{\cO}$.  We will use
this equivalence in the remainder of the paper without comment.

The role that the integers plays for abelian groups and commutative rings is played by the Burnside Mackey functor in Mackey and Tambara functors.

\begin{definition}
The Burnside Mackey functor \(\mA\) assigns to each finite \(G\)-set \(T\) the Grothendieck group of the comma category of finite \(G\)-sets over \(T\). The transfer map associated to \(f\colon T\to T'\) is just the composition:
\[
[S\xrightarrow{g} T]\mapsto [S\xrightarrow{f\circ g} T'],
\] 
while the restriction is given by the pullback. The norms are given by the dependent product \(\Pi_{f}\).
\end{definition}

Unpacking this for orbits, we have that 
\(
\mA(G/H)
\)
is group completion of the category of finite \(H\)-sets (with respect to disjoint union). The restriction map is the obvious forgetful map, and the transfer is given by induction. Here the norm is given by coinduction. In particular, the group \(\mA(G/H)\) is the free abelian group generated by the isomorphism classes of orbits \(H/K\), and the multiplication is determined by Frobenius reciprocity:
\[
[H/K]\cdot[H/L]=Tr_{K}^{H} Res_{K}^{H}[H/L]=Tr_{L}^{H}Res_{L}^{H}[H/K].
\]

In later sections, we will restrict attention to \(C_{2}\), so we give a shorter name to the free orbit.
\begin{notation}
Let \(t\in \mA(C_{2}/C_{2})\) be the element \([C_{2}]\).
\end{notation}

\section{Free \texorpdfstring{$\cO$}{O}-Tambara functors}\label{sec:freeab}

In this section, we give explicit formulas for the free Tambara
functors associated to an indexing system $\cO$.  Here, we work with
an arbitrary finite group $G$.  The formulas established here will
subsequently provide key building blocks for deducing an explicit
expression for the right adjoint.

For any finite $G$-set $T$, we have a Tambara functor 
\[
\mA^{\cO}[x_T]\colon=\cP^{G}_{\cO}(T,-) 
\]
co-representing the functor $\m{R}\mapsto\m{R}(T)$ via the Yoneda Lemma:
\[
\OTamb(\mA^{\cO}[x_T],\mR)\cong \mR(T).
\]
In all that follows, when we say that an element in \(\mR\) is ``adjoint to a map from a free Tambara functor'', we mean via this Yoneda isomorphism.

We now unpack how naturality and the structure maps on $\m{R}$ give a dual structure on
$T\mapsto \mA^{\cO}[x_T]$.

Since for any Tambara functor $\m{R}$ and for any finite $G$-set $T$,
the value $\m{R}(T)$ is naturally a commutative ring, $\mA^{\cO}[x_T]$
has a canonical co-ring structure.  We begin by identifying several
distinguished elements in these \(\cO\)-Tambara functors. 

\begin{definition}
If \(T\) is a finite \(G\)-set, then let \(x_{T}\in \mA^{\cO}[x_{T}](T)\) denote the element represented by the polynomial
\[
[T\xleftarrow{1} T\xrightarrow{1} T\xrightarrow{1} T].
\]

If \(T=T_{0}\amalg T_{1}\), then for \(i=0,1\), let \(x_{T_{i}}\in
\mA^{\cO}[x_{T}](T_{i})\) be the element: 
\[
[T\leftarrow T_{i}\xrightarrow{1} T_{i}\xrightarrow{1} T_{i}].
\]
\end{definition}

When \(T=T'\amalg T'\), then we will use different lower-case Roman
letters to denote the two copies of \(x_{T}\).

\begin{remark}
The elements \(x_{T_{i}}\) are the restrictions of \(x_{T}\) to
\(T_{i}\) along the natural inclusions. These elements also freely
generate the Tambara functor; for any \(\cO\)-Tambara functor
\(\mR\), when $T = T_0 \amalg T_1$ we have a canonical isomorphism
\[
\mR(T)\cong \mR(T_{0})\times\mR(T_{1}).
\]
The elements \(x_{T_{i}}\) represent the inclusions of the summands.
\end{remark}

Having defined elements \(x_{T}\), we can now consider any combination of transfers, norms, or restrictions on them. In particular, we have any polynomial in the ordinary sense.

\begin{definition}\label{def:coRing}
Let the {\defemph{co-addition}} and {\defemph{co-multiplication}}
maps 
\[
\Delta_{+}\colon \mA^{\cO}[x_T]\to \mA^{\cO}[x_T,y_T]\text{ and }\Delta_{\times}\colon \mA^{\cO}[x_T]\to\mA^{\cO}[x_T,y_T],
\]
be the maps adjoint to the elements 
\begin{align*}
x_{T}+y_{T}&= [T\amalg T\xleftarrow{1} T\amalg T\xrightarrow{1} T\amalg T\xrightarrow{\nabla} T]\\
x_{T}\cdot y_{T}&= [T\amalg T\xleftarrow{1} T\amalg T\xrightarrow{\nabla} T\xrightarrow{1} T],
\end{align*}
where \(\nabla\) is the fold map, in $\mA^{\cO}[x_T,y_T](T)$. 

Let the {\defemph{co-additive and co-multiplicative counits}}
\[
\epsilon_+\colon\mA^{\cO}[x_T]\to\mA\text{ and }\epsilon_{\times}\colon\mA^{\cO}[x_T]\to\mA
\] 
be the maps adjoint to $0$ and $1$ respectively. 

Finally, let the {\defemph{co-additive co-inversion}}
\[
c\colon\mA^{\cO}[x_T]\to\mA^{\cO}[x_T]
\]
be the map adjoint to $-x_T$. 
\end{definition}

Summarizing how these operations fit together, we have the following proposition.

\begin{proposition}\label{prop:coRing}
The maps $\Delta_{+}$, $\epsilon_+$, $c$, $\Delta_{\times}$, and
$\epsilon_{\times}$ make $\mA^{\cO}[x_T]$ into a co-commutative
co-ring object in $\cO$-Tambara functors. 
\end{proposition}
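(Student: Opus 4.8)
The plan is to verify each of the co-ring axioms by passing through the Yoneda isomorphism $\OTamb(\mA^{\cO}[x_T],\mR)\cong\mR(T)$. The key observation is that, for any $\cO$-Tambara functor $\mR$, the set $\mR(T)$ is a commutative ring, and all of the co-operations on $\mA^{\cO}[x_T]$ are by definition adjoint to the elementary ring operations on the universal element $x_T$. Concretely, the Yoneda isomorphism is natural in $\mR$, so a map $\mA^{\cO}[x_T]\to\mA^{\cO}[x_T,y_T]$ is determined by an element of $\mA^{\cO}[x_T,y_T](T)$, and under this correspondence $\Delta_+$ goes to $x_T+y_T$ and $\Delta_\times$ to $x_T\cdot y_T$, while $\epsilon_+,\epsilon_\times,c$ go to $0,1,-x_T$ respectively. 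Thus proving a co-identity amounts to checking the corresponding identity among these distinguished polynomial elements in an appropriate free $\cO$-Tambara functor on several variables.

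First I would set up the dictionary precisely: an $n$-fold co-operation corresponds to an element of $\mA^{\cO}[x_T^{(1)},\dots,x_T^{(n)}](T)$, composition of co-operations corresponds to the evident substitution of the universal elements, and equality of two co-operations is equality of the corresponding elements after this substitution. The key point is that in $\mR(T)$ one has genuine commutative ring identities, so e.g. co-associativity of $\Delta_+$ is witnessed by the equality $(x_T+y_T)+z_T = x_T+(y_T+z_T)$ in $\mA^{\cO}[x_T,y_T,z_T](T)$; co-commutativity of $\Delta_+$ by $x_T+y_T = y_T+x_T$; the co-unit law by the fact that the element adjoint to $\epsilon_+$ is $0$, so $x_T + 0 = x_T$; the co-inverse law by $x_T + (-x_T) = 0 = \epsilon_+\circ(\text{unit})$; and similarly for the multiplicative structure and for co-commutativity of $\Delta_\times$. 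The compatibility between $\Delta_+$ and $\Delta_\times$ making the whole thing a co-ring is the distributivity law $x_T\cdot(y_T+z_T) = x_T\cdot y_T + x_T\cdot z_T$, again an identity in $\mA^{\cO}[x_T,y_T,z_T](T)$ because every value of every $\cO$-Tambara functor is a commutative ring and these identities hold naturally.

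The routine but necessary step is to confirm that these polynomial identities genuinely hold in the free $\cO$-Tambara functors, i.e., that the diagram-level descriptions of $x_T+y_T$, $x_T\cdot y_T$, etc. behave as expected under the composition in $\cP^G_{\cO}$ described in Section~\ref{sec:crash}. But this is immediate from the fact that $\mA^{\cO}[x_T] = \cP^G_{\cO}(T,-)$ is itself a legitimate $\cO$-Tambara functor (product-preserving functor $\cP^G_{\cO}\to\Set$ whose values are abelian groups), so its values are commutative rings and the universal elements satisfy exactly the relations forced by the ring axioms. In other words, one does not need to compute in $\cP^G_{\cO}$ at all: the co-ring axioms for $\mA^{\cO}[x_T]$ are the Yoneda-dual reflection of the ring axioms that hold in every $\mR(T)$, and the co-commutativity reflects the commutativity of addition and multiplication.

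The main obstacle is purely bookkeeping: one must be careful about which free Tambara functor each co-operation and each composite lands in (keeping track of the number of polynomial generators and their labels, as flagged in the remark about using distinct Roman letters for distinct copies of $x_T$), and one must check that the naturality squares of the Yoneda isomorphism are invoked correctly so that composites of adjoint maps correspond to substitutions of the corresponding elements. I expect no genuine mathematical difficulty: once the adjunction dictionary is in place, every co-ring axiom is the transpose of a commutative-ring axiom, and the verification is a finite check against the standard list (associativity, commutativity, unitality, inverses for $+$; associativity, commutativity, unitality for $\times$; distributivity), together with the observation that $c$ is a co-inverse precisely because $-x_T$ is an additive inverse in the ring $\mR(T)$.
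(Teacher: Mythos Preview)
Your proposal is correct and matches the paper's approach: the paper gives no explicit proof of this proposition, treating it as an immediate consequence of the Yoneda description $\mA^{\cO}[x_T]=\cP^G_{\cO}(T,-)$ and the fact that $\mR(T)$ is a commutative ring for every $\cO$-Tambara functor $\mR$. Your careful unpacking of the Yoneda dictionary (co-operations correspond to universal elements, composition to substitution, co-ring axioms to ring axioms) is exactly the content the paper leaves implicit.
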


\begin{remark}
We want to stress that in general, $\mA^{\cO}[x_T]$ is not flat over
$\mA$, so a reader versed in the Hopf-schools will at this point be
sorely disappointed. 
\end{remark}

More generally, we have a full co-Tambara functor structure on
$\mA^{\cO}[x_T]$, with these co-ring maps being subsumed by
co-transfer and co-norm maps and being linked by co-restriction maps. 

\begin{definition}\label{def:CoTNR}
Let $f\colon T\to S$ be a map of finite $G$-sets.   
Then let the {\defemph{co-restriction}} associated to $f$ 
\[
\Delta_{R_f}\colon \mA^{\cO}[x_T]\to\mA^{\cO}[x_S]
\]
to be the map adjoint to
\[
R_f(x_S)= [S\xleftarrow{f} T\xrightarrow{1} T\xrightarrow{1} T]\in\mA^{\cO}[x_S](T).
\]

Let the {\defemph{co-transfer}} associated to $f$
\[
\Delta_{T_f}\colon\mA^{\cO}[x_S]\to \mA^{\cO}[x_T]
\]
be the map adjoint to 
\[
T_f(x_T)=[T\xleftarrow{1} T\xrightarrow{1} T\xleftarrow{f}S]\in \mA^{\cO}[x_T](S). 
\]

Let the {\defemph{co-norm}} associated to $f$
\[
\Delta_{N_f}\colon\mA^{\cO}[x_S]\to\mA^{\cO}[x_T]
\]
be the map adjoint to 
\[
N_f(x_T)=[T\xleftarrow{1} T\xrightarrow{f} S\xrightarrow{1} S]\in\mA^{\cO}[x_T](S). 
\]
\end{definition}

The co-restriction map is a map of co-rings, since the dual statement
is universally true.  Since the transfer (norm) associated to the fold
map is the addition (multiplication), the maps $\Delta_{T_f}$ and
$\Delta_{N_f}$ subsume the co-addition and co-multiplication maps
described before. Additionally, naturality shows that these
co-transfer (co-norm) maps compose in the expected way. 

Finally, we note that the universal formulae expressing the norm of a transfer (Proposition~\ref{prop:NormofTransfer}) gives an identity
\[
\Delta_{T_{h}}\circ\Delta_{N_{g}}=\Delta_{R_{f'}}\circ\Delta_{N_{g'}}\circ \Delta_{T_{h'}},
\]
where \(f'\), \(g'\), and \(h'\) are as in Proposition~\ref{prop:NormofTransfer}. We will not need this relation, however.
These observations and
the Yoneda lemma thus imply the following result.

\begin{proposition}\label{prop:CoTambaraonFrees}
The assignment 
\[
\mA^{\cO}[x_{(-)}]:=T\mapsto \mA^{\cO}[x_T]
\]
together with the structure maps given by Definitions~\ref{def:coRing} and \ref{def:CoTNR} gives a co-$\cO$-Tambara functor object in $\cO$-Tambara functors.

For a fixed $\cO$-Tambara functor $\m{R}$, the $\cO$-Tambara functor 
\[
T\mapsto \OTamb\big(\mA^{\cO}[x_T],\m{R}\big)
\]
is canonically isomorphic to $\m{R}$.
\end{proposition}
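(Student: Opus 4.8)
The plan is to recognize this proposition as a repackaging of the (additive) Yoneda lemma for the category $\cP^{G}_{\cO}$ of polynomials, or rather for its additive completion, in which $\mA^{\cO}[x_{T}]=\cP^{G}_{\cO}(T,-)$ is the representable object and $\OTamb$ is the category of additive functors to $\Ab$. The one bit of notation I would fix at the outset is the dictionary relating a polynomial $\phi\colon A\to B$ in $\cP^{G}_{\cO}$, the natural transformation $\cP^{G}_{\cO}(B,-)\to\cP^{G}_{\cO}(A,-)$ it induces by precomposition, and — under the isomorphism $\OTamb(\mA^{\cO}[x_{A}],\mR)\cong\mR(A)$ — the map $\mA^{\cO}[x_{B}]\to\mA^{\cO}[x_{A}]$ adjoint to the class $\phi(x_{A})\in\mA^{\cO}[x_{A}](B)$ obtained by applying the structure map $\phi$ of the Tambara functor $\mA^{\cO}[x_{A}]$ to the tautological generator $x_{A}$.

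For the first assertion, I would observe that $T\mapsto\mA^{\cO}[x_{T}]$ is exactly the contravariant Yoneda embedding $(\cP^{G}_{\cO})^{op}\to\OTamb$ (on the additive completion, so as to accommodate the co-inversion $c$), and that under the dictionary above the structure maps of Definitions~\ref{def:coRing} and \ref{def:CoTNR} are precisely the images under this embedding of the generating morphisms of $\cP^{G}_{\cO}$: the maps $\Delta_{R_{f}}$, $\Delta_{T_{f}}$, $\Delta_{N_{f}}$ come from $R_{f}$, $T_{f}$, $N_{f}$, the maps $\Delta_{+}$ and $\Delta_{\times}$ from the fold transfer $T_{\nabla}$ and fold norm $N_{\nabla}$, and $\epsilon_{+},\epsilon_{\times},c$ from the remaining morphisms. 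Functoriality of the Yoneda embedding is automatic, and it records, after reversing arrows, exactly the relations defining $\cP^{G}_{\cO}$ — the commutation relations among $R$, $N$, and $T$, the pullback relations, and the norm-of-a-transfer identity of Proposition~\ref{prop:NormofTransfer} — which are the relations a co-$\cO$-Tambara functor object must satisfy; these were in fact verified piecemeal in the remarks preceding the statement. The only additional point is that disjoint unions of $G$-sets must be carried to coproducts in $\OTamb$, and this reduces via Yoneda and the additivity of Mackey functors, $\mR(T_{0}\amalg T_{1})\cong\mR(T_{0})\times\mR(T_{1})$, to the canonical isomorphism $\mA^{\cO}[x_{T_{0}\amalg T_{1}}]\cong\mA^{\cO}[x_{T_{0}}]\amalg\mA^{\cO}[x_{T_{1}}]$.

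For the second assertion, I would fix $\mR$ and note that for each finite $G$-set $T$ the required bijection $\OTamb(\mA^{\cO}[x_{T}],\mR)\cong\mR(T)$ is the Yoneda isomorphism already recorded above. What remains is naturality as $T$ ranges over $\cP^{G}_{\cO}$, where the left side carries the $\cO$-Tambara structure transported from the co-$\cO$-Tambara structure of the first assertion and the right side its given structure. By naturality of the Yoneda isomorphism in the representing variable, a polynomial $\phi\colon A\to B$ acts on $T\mapsto\OTamb(\mA^{\cO}[x_{T}],\mR)$ by precomposition with $\mA^{\cO}[x_{B}]\to\mA^{\cO}[x_{A}]$, and this transports under the bijection to $\mR(\phi)\colon\mR(A)\to\mR(B)$. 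Hence the bijections assemble into an isomorphism of product-preserving functors $\cP^{G}_{\cO}\to\Set$; since each $\OTamb(\mA^{\cO}[x_{T}],\mR)$ is an abelian group (being a hom-group in an additive category, equivalently via the co-group structure $\Delta_{+},\epsilon_{+},c$), this is an isomorphism of $\cO$-Tambara functors in the sense of Definition~\ref{def:OTamb}.

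I do not expect a genuine conceptual obstacle: the argument is formal once the Yoneda machinery is in place, as the phrase ``these observations and the Yoneda lemma thus imply'' suggests. The real work is bookkeeping — making the dictionary of the first paragraph precise, in particular checking that the ``adjoint to this element'' descriptions of the co-norm $\Delta_{N_{f}}$ and co-transfer $\Delta_{T_{f}}$ really are precomposition with $N_{f}$ and $T_{f}$ (co-restriction being formal, and co-addition and co-multiplication being the fold-map special cases), and then confirming that the $\cO$-Tambara structure on $T\mapsto\OTamb(\mA^{\cO}[x_{T}],\mR)$ is literally the one transported through the Yoneda bijection rather than merely isomorphic to it.
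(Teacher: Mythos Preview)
Your proposal is correct and takes essentially the same approach as the paper, which offers no formal proof but simply remarks that ``these observations and the Yoneda lemma thus imply the following result.'' You have faithfully unpacked exactly that: the co-Tambara structure is the Yoneda embedding applied to the generating morphisms $R_f$, $T_f$, $N_f$ (with the co-ring maps as fold-map special cases), and the second assertion is naturality of the Yoneda isomorphism in the representing variable.
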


In the case when $T$ is a trivial $G$-set, we simply obtain a
polynomial ring; this is easy to see as follows.

\begin{definition}
Let \(-\otimes\mA\) be the left adjoint to the functor which evaluates a Mackey functor on the finite \(G\)-set \(G/G\). 
\end{definition}

\begin{lemma}
If \(M\) is an abelian group, then \(M\otimes\mA\) is the Mackey functor
\[
T\mapsto M\otimes \mA(T).
\]
The structure maps are determined by those of \(\mA\).
\end{lemma}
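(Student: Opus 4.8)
The plan is to identify the left adjoint through its universal property, reducing everything to the single standard fact that $\mA$ corepresents evaluation at $G/G$. Recall that ($G$-)Mackey functors are the same thing as additive functors $\Burn^G\to\Ab$ on the Burnside category (finite $G$-sets, with morphisms the group completions of spans), and that under this identification the Burnside Mackey functor is the one corepresented by $G/G$: a span out of $G/G$ is just a finite $G$-set over $T$, so $\Burn^G(G/G,T)=\mA(T)$, with span composition recovering the transfer (post-composition) and restriction (pullback) maps. Hence, by Yoneda,
\[
\mathrm{ev}_{G/G}(\mN)=\mN(G/G)\cong\Hom_{\Mackey}(\mA,\mN)
\]
naturally in $\mN$, and its left adjoint $-\otimes\mA$ is the copower (tensoring over $\Ab$) by the object $\mA$ in the functor category $\Mackey$. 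Since colimits of Mackey functors are computed pointwise, this copower is computed pointwise too, which is both the asserted formula and the assertion that the structure maps are inherited from $\mA$. What remains is to make this concrete.

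To do so I would run a free-presentation argument. Write $L:=-\otimes\mA$. Being a left adjoint, $L$ preserves colimits, hence coproducts; together with $L(\Z)\cong\mA$ (immediate from the display above, comparing $\Hom_{\Mackey}(L(\Z),\mN)\cong\Hom_{\Ab}(\Z,\mN(G/G))\cong\mN(G/G)$) this gives $L(\Z^{(I)})=\bigoplus_I\mA$, whose value at $T$ is $\bigoplus_I\mA(T)=\Z^{(I)}\otimes_\Z\mA(T)$. Now present an arbitrary abelian group as $\Z^{(J)}\xrightarrow{\alpha}\Z^{(I)}\to M\to 0$. Applying $L$ and using that cokernels of Mackey functors are formed pointwise,
\[
L(M)(T)\cong\coker\bigl(\mA(T)^{(J)}\longrightarrow\mA(T)^{(I)}\bigr),
\]
where the displayed map is $\alpha\otimes_\Z\mA(T)$: $L$ is additive, hence $\Z$-linear on hom-groups, so $L(\alpha)$ is the integer matrix $\alpha$ acting through the $\Z$-module structure. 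By right exactness of $-\otimes_\Z\mA(T)$ this cokernel is $M\otimes_\Z\mA(T)$, naturally in $T$; and because $L(\alpha)\colon\mA^{(J)}\to\mA^{(I)}$ is a map of Mackey functors, the restriction and transfer maps induced on the cokernel $M\otimes\mA$ are exactly $M\otimes_\Z(-)$ applied to those of $\mA$, which is the final clause of the lemma.

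The one genuinely subtle point is the interaction between forming the left adjoint and evaluating at an arbitrary $G$-set $T$: the universal property of $-\otimes\mA$ controls only the value at $G/G$. This is precisely what the free-presentation argument handles, since left adjoints commute with the (pointwise) colimits that build both $L(M)$ and $T\mapsto M\otimes_\Z\mA(T)$ out of the generator $\mA=L(\Z)$; note also that every tensor product appearing is over $\Z$, using (as recorded in Section~\ref{sec:crash}) that each $\mA(G/H)$ is free abelian. Finally, the comparison isomorphism is induced by the identity on the presenting objects $\mA^{(I)}$ and $\mA^{(J)}$, so it is canonical and independent of the chosen presentation of $M$.
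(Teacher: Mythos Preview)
Your proof is correct and follows essentially the same approach as the paper: verify $L(\Z)\cong\mA$ from the defining adjunction, then use that left adjoints preserve colimits together with a free presentation of $M$ (the paper phrases this as ``choosing a free resolution of $M$ and viewing it as a reflexive coequalizer''). You are more explicit about the fact that colimits of Mackey functors are computed pointwise, which is the step that actually yields the formula $T\mapsto M\otimes\mA(T)$, but the underlying strategy is identical.
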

\begin{proof}
By definition, the left adjoint applied to \(\Z\) gives \(\mA\). Since left adjoints commute with colimits, the result follows from choosing a free resolution of \(M\) and viewing it as a reflexive coequalizer.
\end{proof}

\begin{proposition}
The functor \(-\otimes\mA\) is a strong symmetric monoidal functor.
\end{proposition}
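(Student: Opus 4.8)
The plan is to obtain the symmetric monoidal structure on $-\otimes\mA$ as the mate of a tautological lax symmetric monoidal structure on its right adjoint, and then to verify that the resulting structure maps are isomorphisms.

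The right adjoint is evaluation at $G/G$, $\mathrm{ev}_{G/G}\colon\Mackey\to\Ab$. I would first recall that the Burnside Mackey functor $\mA$ is the unit for the box product and is the representable Mackey functor at $G/G$ (its value at $T$ being the Grothendieck group of finite $G$-sets over $T$), so that by the Yoneda lemma $\mathrm{ev}_{G/G}=\Hom_{\Mackey}(\mA,-)$ is corepresented by the monoidal unit. As such it carries the standard lax symmetric monoidal structure: a pair $f\colon\mA\to M$, $g\colon\mA\to N$ is sent to $\mA=\mA\Box\mA\xrightarrow{f\Box g}M\Box N$, and the lax unit $\Z\to\mA(G/G)$ is the unit of the Burnside ring; associativity, unitality and symmetry of this structure follow from coherence in $\Mackey$. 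By doctrinal adjunction, the left adjoint $-\otimes\mA$ then acquires a canonical oplax symmetric monoidal structure, with structure maps
\[
\gamma_{M,N}\colon (M\otimes_{\Z}N)\otimes\mA\longrightarrow (M\otimes\mA)\Box(N\otimes\mA),
\qquad
\gamma_{0}\colon\Z\otimes\mA\longrightarrow\mA
\]
arising as mates, all coherence axioms being automatic.

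It then remains to check that $\gamma_{0}$ and the $\gamma_{M,N}$ are isomorphisms. For $\gamma_{0}$, the previous lemma already gives $\Z\otimes\mA\cong\mA$, and the unit axiom forces $\gamma_{0}$ to be precisely this identification. For $\gamma_{M,N}$, I would note that both $(M,N)\mapsto (M\otimes_{\Z}N)\otimes\mA$ and $(M,N)\mapsto(M\otimes\mA)\Box(N\otimes\mA)$ preserve colimits in each variable separately --- the former because $\otimes_{\Z}$ and the left adjoint $-\otimes\mA$ do, the latter because $-\otimes\mA$ does and the box product preserves colimits in each variable --- and that $\gamma$ is a natural transformation between them. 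Presenting an arbitrary abelian group as a cokernel of a map of free abelian groups and a free abelian group as a coproduct of copies of $\Z$, and using naturality, one reduces (fixing $N=\Z$ and varying $M$, then fixing $M$ and varying $N$) to the single case $\gamma_{\Z,\Z}\colon\mA\to\mA\Box\mA$. By the unit coherence of the oplax structure together with the fact that $\gamma_{0}$ is an isomorphism, $\gamma_{\Z,\Z}$ must be the canonical unitor, hence an isomorphism; therefore every $\gamma_{M,N}$ is an isomorphism and $-\otimes\mA$ is strong symmetric monoidal.

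The part I expect to take the most care is the bookkeeping around doctrinal adjunction: identifying $\mathrm{ev}_{G/G}$ with $\Hom_{\Mackey}(\mA,-)$ and tracing the mate construction closely enough to see that $\gamma_{\Z,\Z}$ really is the unit isomorphism of $\Mackey$, so that the abstract coherence collapses to a checkable statement. The remaining reductions are formal facts about colimit-preserving functors on $\Ab$.
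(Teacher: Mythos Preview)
Your argument is correct, but it follows a different route from the paper's. The paper proceeds by a single Yoneda computation: using that the box product is closed and that $\Hom(\mM,\m{N})\cong\m{\Hom}(\mM,\m{N})(G/G)$, it produces a chain of natural isomorphisms
\[
\Hom\big((M\otimes\mA)\Box(N\otimes\mA),\mB\big)\cong\cdots\cong\Hom\big((M\otimes N)\otimes\mA,\mB\big),
\]
each step being an instance of one of the two adjunctions or of the tensor--hom adjunction in $\Ab$, and then invokes Yoneda. Your approach instead packages the same adjunctions through doctrinal adjunction: you endow $\mathrm{ev}_{G/G}=\Hom_{\Mackey}(\mA,-)$ with its canonical lax monoidal structure, take mates to obtain an oplax structure on the left adjoint, and then check strongness by a colimit reduction to the case $M=N=\Z$. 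What your route buys is that the symmetric monoidal coherence axioms come for free from the mate calculus, whereas the paper's Yoneda argument leaves them implicit in the naturality of each step; what the paper's route buys is brevity, since the isomorphism drops out in one line without the separate reduction argument. Both use the closedness of $\Box$ in an essential way (you need it to know $\Box$ preserves colimits in each variable; the paper needs the internal $\m{\Hom}$ directly).
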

\begin{proof}
The symmetric monoidal product on Mackey functors is a closed symmetric monoidal structure. The internal \(\Hom\) is closely connected to the ordinary \(\Hom\) in Mackey functors:
\[
\Hom(\mM,\m{N})\cong \m{\Hom}(\mM,\m{N})(G/G).
\]

We then have
\begin{align*}
\Hom\big((M\otimes\mA)\Box (N\otimes\mA), \m{B}\big)&\cong
\Hom\big(M\otimes\mA, \m{\Hom}(N\otimes\mA,\m{B})\big) \\
&\cong \Hom\big(M,\m{\Hom}(N\otimes\mA,\m{B})(G/G)\big) \\
&\cong \Hom\big(M,\Hom(N\otimes\mA,\mB)\big) \\
&\cong \Hom\big(M,\Hom(N,\m{B}(G/G))\big) \\
&\cong \Hom\big(M\otimes N, \m{B}(G/G)\big) \\
&\cong \Hom\big((M\otimes N)\otimes \mA,\m{B}\big). \qedhere
\end{align*}
\end{proof}

\begin{corollary}\label{cor:FreeGreenFixed}
If \(T\) is a finite set with trivial \(G\)-action, then
\[
\mA^{\cO}[x_{T}]\cong \Z[t\mid t\in T]\otimes \mA.
\]
\end{corollary}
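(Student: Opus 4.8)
The plan is to verify corepresentability directly, leveraging that $-\otimes\mA$ is strong symmetric monoidal. Since $T$ carries the trivial action we may write $T=\coprod_{t\in T}G/G$, so Proposition~\ref{prop:CoTambaraonFrees} identifies $\mA^{\cO}[x_{T}]$ as corepresenting $\mR\mapsto\mR(T)\cong\prod_{t\in T}\mR(G/G)$; viewed as a functor valued in commutative rings this is $\mR\mapsto\Hom_{\Comm}\big(\Z[t\mid t\in T],\mR(G/G)\big)$. On the other hand $\Z[t\mid t\in T]=\Sym_{\Ab}\big(\bigoplus_{t\in T}\Z\big)$ is the free commutative ring on the set $T$, and since $-\otimes\mA$ is strong symmetric monoidal and preserves colimits it carries this to the free commutative monoid in Mackey functors on $\big(\bigoplus_{t\in T}\Z\big)\otimes\mA=\bigoplus_{t\in T}\mA$; since $\mA$ is the free Mackey functor on $G/G$, the latter is the free Mackey functor on the $G$-set $T$, so $\Z[t\mid t\in T]\otimes\mA$ is the free Green functor on $T$. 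Using the adjunction $-\otimes\mA\dashv\mathrm{ev}_{G/G}$ induced on commutative monoids, it therefore corepresents $\m{S}\mapsto\m{S}(G/G)^{T}=\m{S}(T)$ among Green functors.

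It then remains to promote this to a statement about $\cO$-Tambara functors: that $\Z[t\mid t\in T]\otimes\mA$ carries the free $\cO$-Tambara structure and that a map of Green functors from it to (the Green functor underlying) an $\cO$-Tambara functor $\mR$ automatically commutes with the co-norms. Here I would use that $\Z[t\mid t\in T]\otimes\mA$ is generated as a Green functor by the elements $t\otimes 1\in(\Z[t\mid t\in T]\otimes\mA)(G/G)$: their restrictions recover $t\otimes 1$ at every level, products give arbitrary monomials, and transfers of monomials span each $\Z[t\mid t\in T]\otimes\mA(G/H)$ by Frobenius reciprocity. The decisive point is that a generator sits over $G/G$, so any $G$-map $g\colon G/G\to V$ in $\cO$ is the inclusion of a fixed point, whence the co-norm $N_{g}$ of a generator is forced and already expressible through the Green structure and the unit (schematically $N_{g}(a)=1+T_{g}(a)-T_{g}(1)$); feeding this through the pullback relation for $N\circ R$ and the exponential-diagram relation for $N\circ T$ of Proposition~\ref{prop:NormofTransfer} rewrites $N_{g}$ of any element of the Green-generated subfunctor in terms of restrictions, transfers, sums, products, and co-norms of the generators, so agreement on generators propagates to everything.

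Granting this, $\Z[t\mid t\in T]\otimes\mA$ corepresents $\mR\mapsto\mR(T)$ on $\cO$-Tambara functors, and comparison with Proposition~\ref{prop:CoTambaraonFrees} together with the Yoneda lemma yields the isomorphism $\mA^{\cO}[x_{T}]\cong\Z[t\mid t\in T]\otimes\mA$ carrying $x_{t}$ to $t\otimes 1$; it is an isomorphism of co-rings by Proposition~\ref{prop:coRing}. I expect the real obstacle to be exactly the middle step: pinning down the $\cO$-Tambara (co-)norm structure on $\Z[t\mid t\in T]\otimes\mA$, i.e.\ checking carefully that for a trivial $G$-set the norm operations introduce nothing not already present, since this is precisely where the combinatorics of the generators lying over $G/G$ and of the $N$–$R$ and $N$–$T$ interchange relations must be controlled.
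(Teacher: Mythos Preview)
Your first paragraph is correct and is exactly the paper's argument: the corollary is recorded immediately after the proposition that $-\otimes\mA$ is strong symmetric monoidal, and the intended proof is precisely the adjunction
\[
\OTamb\big(\Z[t\mid t\in T]\otimes\mA,\m{S}\big)\cong \Hom_{\Comm}\big(\Z[t\mid t\in T],\m{S}(G/G)\big)\cong \m{S}(G/G)^{T}\cong \m{S}(T)
\]
for Green functors $\m{S}$, together with Yoneda. Nothing more is needed.

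The genuine problem is your second paragraph, where you try to promote the statement to arbitrary $\cO$. That extension is not just hard; it is false, and the paper itself exhibits the counterexample. For $G=C_{2}$ and $\cO'$ the complete indexing system, Lemma~\ref{lem:FreeTambaraonFixed} computes
\[
\mA^{\cO'}\![x_{\ast}](\ast)\cong \mA(\ast)[x_{\ast},n_{\ast}]/\big(t(x_{\ast}^{2}-n_{\ast})\big),
\]
which is \emph{not} isomorphic to $(\Z[x]\otimes\mA)(\ast)=\mA(\ast)[x]$: the class $n_{\ast}=N_{e}^{C_{2}}\big(res_{e}^{C_{2}}x_{\ast}\big)$ is a new generator, not a polynomial in $x_{\ast}$. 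So the corollary must be read with $\cO$ the trivial indexing system (and the paper uses it only in that sense; see the first line of \S\ref{sec:freeC2}).

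Your proposed mechanism for why norms should be ``already there'' breaks at exactly the spot you suspected. Your schematic identity $N_{g}(a)=1+T_{g}(a)-T_{g}(1)$ is fine for $g\colon G/G\to V$, but those are not the norms that matter. The dangerous ones are $N_{f}\circ R_{f}$ applied to a generator, with $f\colon G/H\to G/G$ nontrivial, and there is no relation that rewrites $N\circ R$ in Green-functor terms: the pullback relation runs the other way ($R\circ N\Rightarrow N'\circ R'$), and the exponential relation rewrites $N\circ T$, not $N\circ R$. Thus $N_{f}\big(R_{f}(x_{t})\big)$ is irreducible, which is precisely the element $n_{\ast}$ above. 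Drop the second step and your argument is complete for the case the corollary actually covers.
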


\section{Free \texorpdfstring{$C_{2}$}{C2} Green and Tambara functors}\label{sec:freeC2}

We now specialize to the case where $G = C_2$ and work out in detail
what the descriptions of the previous section look like.  The more
general case of $G = C_p$ is completely analogous.

\begin{notation}
Since there are only two subgroups of $C_2$, Mackey, Green, and Tambara
functors are completely determined by their values on $C_2/C_2$ and
$C_2/e$. We will refer to the value of a Mackey functor at $C_2/H$ as
its {\defemph{value at level $C_2/H$}}. We will refer to elements in
the value at $C_2/C_2$ as {\defemph{fixed}} and to elements in the
value at $C_2/e$ as {\defemph{underlying}}.
\end{notation}

There are only two indexing systems for $C_2$, since there are only 2
subgroups of $C_2$:
\begin{enumerate}
\item $\cO$ the trivial coefficient system, giving Green functors, and
\item $\cO'$ the complete coefficient system, giving Tambara functors.
\end{enumerate}

We begin with the free Green functor.  In the formulae below we adopt
the convention that for a finite $G$-set $T$, a subscript of $T$ on an
element will indicate that this is an element in $\m{R}(T)$.

\subsection{Free \texorpdfstring{\(C_{2}\)}{C2} Green Functors}
Corollary~\ref{cor:FreeGreenFixed} describes the free Green functor on a fixed generator. We need now only determine the free Green functor on an underlying generator.

\begin{lemma}\label{lem:FreeGreenC2}
We have isomorphisms of $C_2$-rings
\[
\mA^{\cO}[x_{C_2}](C_2/e)\cong \Z[x,\bar{x}],
\]
where \(\bar{x}\) is the Weyl conjugate of \(x\).

We have an isomorphism of rings
\begin{multline*}
\mA^{\cO}[x_{C_2}](C_2/C_2)\cong \\
\mA(C_2/C_2)\big[\{t_{i,j} | 0\leq i,  j\}\big]/\big(t_{i,j}=t_{j,i}, t_{0,0}=t, t_{i,j}\cdot t_{k,\ell}=t_{i+k,j+\ell}+t_{i+\ell, j+k}\big).
\end{multline*}
The restriction is determined by
\[
res_{e}^{C_2}(t_{i,j})=x^i\bar{x}^j+x^j\bar{x}^i,
\]
while the transfer of is given by
\[
tr_e^{C_2}(x^i\bar{x}^j)=t_{i,j}.
\]
\end{lemma}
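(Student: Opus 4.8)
Take $\cO$ to be the trivial coefficient system, so that $\cO$-Tambara functors are exactly $C_2$-Green functors. Then Proposition~\ref{prop:CoTambaraonFrees} says that $\mA^{\cO}[x_{C_2}]$ co-represents $\mR\mapsto\mR(C_2/e)$; in other words it is the free Green functor on a single underlying generator $x$. The plan is to write down an explicit Green functor $\m{S}$ whose two levels are the rings in the statement, check by hand that $\m{S}$ is a Green functor, and then verify that it has the universal property $\OTamb(\m{S},\mR)\cong\mR(C_2/e)$. Granting this, $\m{S}\cong\mA^{\cO}[x_{C_2}]$, and under the isomorphism the element $t_{i,j}$ corresponds to $\tr_e^{C_2}(x^i\bar{x}^j)$ (with $\bar{x}=w(x)$, where $w$ is the Weyl action); the transfer formula is then a tautology and the restriction formula is the standard $C_2$ double coset formula $\res_e^{C_2}\tr_e^{C_2}(u)=u+w(u)$.

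To build $\m{S}$, put $\m{S}(C_2/e)=\Z[x,\bar{x}]$ with $w$ swapping $x$ and $\bar{x}$, let $\m{S}(C_2/C_2)$ be the quotient ring in the statement, let $\res_e^{C_2}$ be the ring homomorphism extending $t\mapsto 2$ and $t_{i,j}\mapsto x^i\bar{x}^j+x^j\bar{x}^i$, and let $\tr_e^{C_2}$ be the additive map extending $x^i\bar{x}^j\mapsto t_{i,j}$. Verifying that $\m{S}$ is a Green functor reduces to: showing $\res_e^{C_2}$ is well defined, i.e.\ that the three families of defining relations of $\m{S}(C_2/C_2)$ (and $t^2=2t$) are sent to $0$ in $\Z[x,\bar{x}]$, a direct expansion; noting that $\tr_e^{C_2}$ is $w$-invariant by the relation $t_{i,j}=t_{j,i}$ and additive by construction; reading off the double coset formula from the definition of $\res_e^{C_2}$; and checking Frobenius reciprocity $\tr_e^{C_2}\big(u\cdot\res_e^{C_2}(a)\big)=\tr_e^{C_2}(u)\cdot a$, which for $u=x^i\bar{x}^j$ and $a=t_{k,\ell}$ is exactly the product relation $t_{i,j}t_{k,\ell}=t_{i+k,j+\ell}+t_{i+\ell,j+k}$. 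These checks are mechanical; the relations were reverse-engineered to make them hold.

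For the universal property, fix a Green functor $\mR$ and $r\in\mR(C_2/e)$. A Green functor map $\phi\colon\m{S}\to\mR$ with $\phi(x)=r$ is determined: it is the ring map with $\phi(\bar{x})=w(r)$ on underlying level, its restriction to $\mA\subseteq\m{S}$ is the canonical map $\mA\to\mR$, and $\phi(t_{i,j})=\phi\big(\tr_e^{C_2}(x^i\bar{x}^j)\big)=\tr_e^{C_2}\big(r^i w(r)^j\big)$ — and the $\mA$-part together with the $t_{i,j}$ generate $\m{S}(C_2/C_2)$. Conversely, these assignments do define a Green functor map: the underlying piece exists by freeness of the $C_2$-ring $\Z[x,\bar{x}]$; on the fixed level one extends the canonical map $\mA\to\mR$ to the polynomial ring by $t_{i,j}\mapsto\tr_e^{C_2}(r^i w(r)^j)$, and this descends to $\m{S}(C_2/C_2)$ because the defining relations are killed — the symmetry relation by $w$-invariance of transfers, $t_{0,0}=t$ because $[C_2]=\tr_e^{C_2}(1)$ in every Green functor, and the product relation by Frobenius reciprocity exactly as above; finally the map so defined commutes with $\res$ (double coset formula), with $\tr$ (by construction) and with $w$. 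This yields the natural bijection $\OTamb(\m{S},\mR)\cong\mR(C_2/e)$, hence $\m{S}\cong\mA^{\cO}[x_{C_2}]$. The one genuinely substantive point is the existence half here: one must confirm that the three listed relations hold in \emph{every} Green functor, not merely in $\m{S}$; once that is in hand the universal property pins down $\mA^{\cO}[x_{C_2}]$ on the nose, and in particular there is no separate ``are these all the relations'' question to settle — a question that a direct computation of $\cP^{C_2}_{\cO}(C_2,-)$, enumerating exponential diagrams into $C_2/e$ and $C_2/C_2$ and group-completing, would have to confront head-on.
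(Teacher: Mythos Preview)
Your proof is correct but follows a genuinely different strategy from the paper's. The paper computes $\mA^{\cO}[x_{C_2}]$ directly from its definition as $\cP^{C_2}_{\cO}(C_2,-)$: it analyzes a general polynomial $C_2\xleftarrow{f} S_0\xrightarrow{g} S_1\xrightarrow{h} T$, uses that the source is $C_2$ to force $S_0$ free, uses the Green-functor hypothesis (isotropy-preservation of $g$) to force $S_1$ free as well, and then reads off that every class at the fixed level is a transfer $\tr_e^{C_2}(x^i\bar{x}^j)$, with the symmetry relation coming from the ambiguity in the choice of representative and the product relation from Frobenius reciprocity. You instead build the target Green functor $\m{S}$ by hand and establish the universal property $\OTamb(\m{S},\mR)\cong\mR(C_2/e)$, which pins $\m{S}$ down as $\mA^{\cO}[x_{C_2}]$. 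Your route sidesteps the ``have we found all the relations'' question entirely (as you note), at the cost of needing to guess the answer in advance; the paper's route is more mechanical and shows \emph{where} the relations come from in the polynomial formalism, which is useful for the parallel computations in Lemmas~\ref{lem:FreeTambaraonFixed} and~\ref{lem:FreeTambaraonC2} that follow. Both are complete; yours is arguably cleaner as a stand-alone argument, while the paper's fits more naturally into the surrounding narrative built on Definition~\ref{defn:Polynomials}.
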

Recall that the element \(t\) represents the free orbit \([C_{2}]\) in \(\mA(C_{2}/C_{2})\).
\begin{proof}
The first part is simply the statement that when we restrict to \(C_{2}\)-rings, the free Green functor on a class \(x_{C_{2}}\) is the free commutative algebra in \(C_{2}\)-rings on a class \(x\). We can also express the generators easily in the language of polynomials from Definition~\ref{defn:Polynomials}:
\[
x=\big[C_{2}\xleftarrow{1}C_{2}\xrightarrow{1} C_{2}\xrightarrow{1} C_{2}\big]\text{ and }\bar{x}=\big[C_{2}\xleftarrow{1} C_{2}\xrightarrow{\gamma} C_{2} \xrightarrow{1} C_{2}\big].
\]
The equivalence relation on polynomials shows that any span like this will represent either \(x\) or \(\bar{x}\): we need only count the number of occurrences of \(\gamma\).

The elements of \(\mA^{\cO}[x_{C_{2}}](\ast)\) are polynomials
\begin{equation}\label{eqn:Polynomial}
C_{2}\xleftarrow{f} S_{0}\xrightarrow{g} S_{1}\xrightarrow{h} \ast.
\end{equation}
Since we are mapping to \(C_{2}\), the map \(f\) is of the form
\[
C_{2}\times f_{0}\colon C_{2}\times S'\to C_{2}\times \ast\cong C_{2},
\]
where \(S'\) has a trivial \(C_{2}\)-action. Since we are considering only Green functors, we know that the map \(g\) must preserve the stabilizers of points, and hence \(S_{1}\) must be free. This forces \(h\) to be the composite of iterated fold maps with the canonical map \(C_{2}\to\ast\), and since the fold map corresponds to the addition and the map \(C_{2}\to\ast\) to the transfer, we understand exactly what the map \(T_{h}\) gives.

The map \(g\) is more interesting. Our analysis of \(T_{h}\) allows us to work one orbit in \(S_{1}\) at a time, so without loss of generality, our polynomial is of the form
\[
C_{2}\xleftarrow{f} \coprod_{S'} C_{2}\xrightarrow{g} C_{2}\to\ast.
\]
On each summand, the map \(g\) is either the identity or multiplication by \(\gamma\), a generator of \(C_{2}\). Folding these all together gives the product, and using the identifications of \(x\) and \(\bar{x}\) above, we see that this polynomial represents
\[
tr_{e}^{C_{2}}(x^{i}\bar{x}^{j}),
\]
where \(i\) is the number of summands on which \(g\) is the identity and where \(j\) is the number on which it is multiplication by \(\gamma\). Any other representative of this polynomial will either be a rearranging of the summands (which clearly does not change the result) or composing each map with multiplication by \(\gamma\). This switches the roles of \(i\) and \(j\), giving the symmetry condition.

The final multiplicative relations are just Frobenius reciprocity.
\end{proof}

In words, everything at level $C_2/C_2$ besides the multiplicative unit is in the image of the transfer, and all of the multiplicative relations are determined by the Frobenius relation
\[
a\cdot tr_e^{C_2}(b)=tr_e^{C_2}\big(res_e^{C_2}(a)\cdot b\big).
\]

\begin{remark}
The kernel of the restriction map is the ideal induced from the augmentation ideal in the Burnside ring. If we consider the image of the restriction, then it is exactly the image of the trace map $x\mapsto x+\bar{x}$. In particular, we do not see the norm class $x\bar{x}$ in the image. In some sense, the free Tambara functor on an underlying generator below exactly fixes this problem.
\end{remark}

\subsection{Free \texorpdfstring{\(C_{2}\)}{C2} Tambara functors}\label{ssec:FreeTambara}
The free Tambara functor on one fixed generator can be built out of
the free Green functor by formally adding all of the missing norms. In
this case, the situation is greatly simplified: the underlying ring is
polynomial: 
\[
\mA^{\cO'}[x_\ast](C_2/e)=\Z[x].
\]
The norm is multiplicative, so the norm of any monomial of the form \(ax^{n}\) with \(a\in\Z\) is determined by the norm of \(x\) and by the norm of \(a\). The latter is given by the formulae for norms in the Burnside Tambara functor. For more general polynomials in \(x\), we use a universal formula. 

\begin{lemma}[{\cite{Tambara}, \cite[Theorem 3.5]{MazurArxiv}}]\label{lem:NormofSum}
Let $\m{R}$ be a $C_2$-Tambara functor, and let $a,b\in\m{R}(C_2/e)$. Then in $\m{R}(C_2/C_2)$ we have
\[
N_e^{C_2}(a+b)=N_e^{C_2}(a)+N_e^{C_2}(b)+tr_e^{C_2}(a\bar{b}).
\]
\end{lemma}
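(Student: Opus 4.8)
The plan is to prove this identity in an arbitrary $C_2$-Tambara functor $\m{R}$ by a direct manipulation of transfers and norms, the point being that $\m{R}(C_2/C_2)$ has no elementary description that exhibits the cross term, so we must extract it from the norm-of-transfer formula. First I would rewrite the left-hand side as a norm of a transfer. Writing $\nabla\colon C_2\amalg C_2\to C_2$ for the fold map and $a\amalg b$ for the element of $\m{R}(C_2\amalg C_2)\cong\m{R}(C_2/e)\times\m{R}(C_2/e)$ corresponding to the pair $(a,b)$, we have $a+b=T_\nabla(a\amalg b)$, so that
\[
N_e^{C_2}(a+b)=N_g\big(T_\nabla(a\amalg b)\big),
\]
where $g\colon C_2\to C_2/C_2$ is the unique $C_2$-map (for which $N_g=N_e^{C_2}$ and $T_g=tr_e^{C_2}$).

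Next I would feed the composite $N_g\circ T_\nabla$ into Proposition~\ref{prop:NormofTransfer}. This requires the exponential diagram built from $g$ and $\nabla$, whose crucial ingredient is the dependent product $\prod_g(C_2\amalg C_2)$: its underlying set is the set of sections of $\nabla$ over the fiber $g^{-1}(C_2/C_2)=C_2$, of which there are four, and a short inspection of the residual $C_2$-action shows that the two ``constant'' sections (the one landing entirely in the first copy of $C_2$, the one landing entirely in the second) are fixed while the other two are interchanged, so that
\[
\prod\nolimits_g(C_2\amalg C_2)\;\cong\;C_2/C_2\,\amalg\,C_2/C_2\,\amalg\,C_2/e.
\]
Then I would trace the three maps $f'$ (evaluation), $g'$ (projection), $h'$ (structure map to $C_2/C_2$) of the exponential diagram through this decomposition: over each of the two fixed sections, $g'$ restricts to the map $C_2\to C_2/C_2$, so $N_{g'}$ contributes a norm $N_e^{C_2}$ there — of $a$ over the first and of $b$ over the second, read off via $f'$ — whereas over the free orbit, $g'$ restricts to the fold map $C_2\amalg C_2\to C_2$, so $N_{g'}$ there is a product, and $f'$ sends one sheet to the first copy of $C_2$ and the other, with a Weyl twist, to the second, producing $a\bar b$. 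Finally $h'$ is the identity over the two $C_2/C_2$ summands and the unique map $C_2/e\to C_2/C_2$ over the last summand, so $T_{h'}$ is $\mathrm{id}+\mathrm{id}+tr_e^{C_2}$. Assembling, $N_gT_\nabla(a\amalg b)=T_{h'}N_{g'}R_{f'}(a\amalg b)=N_e^{C_2}(a)+N_e^{C_2}(b)+tr_e^{C_2}(a\bar b)$.

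The delicate step is the last identification: pinning down exactly which conjugate the evaluation map $f'$ introduces as the free orbit of $\prod_g(C_2\amalg C_2)$ threads through the two copies of $C_2$. This is precisely where the cross term $tr_e^{C_2}(a\bar b)$ — rather than $tr_e^{C_2}(ab)$ — is generated, and it is the natural place for a sign-or-conjugate bookkeeping slip; I would do this chase on explicit two-element fibers with fixed basepoint conventions. As a consistency check one can apply $res_e^{C_2}$ to the asserted formula: using $res_e^{C_2}N_e^{C_2}(a)=a\bar a$, $res_e^{C_2}\,tr_e^{C_2}(c)=c+\bar c$, and Frobenius reciprocity, the right-hand side becomes $a\bar a+b\bar b+a\bar b+\bar a b=(a+b)\overline{(a+b)}=res_e^{C_2}N_e^{C_2}(a+b)$, as it must; this check is reassuring but not a proof, since the restriction need not be injective on $\m{R}(C_2/C_2)$. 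Note finally that $tr_e^{C_2}(a\bar b)=tr_e^{C_2}(\bar a b)$ because transfers are Weyl-invariant, so the formula is visibly symmetric in $a$ and $b$.
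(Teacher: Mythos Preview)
Your argument is correct. The paper does not give its own proof of this lemma; it simply cites Tambara and Mazur, so there is no in-paper argument to compare against. Your route---rewriting $a+b$ as $T_\nabla(a\amalg b)$, applying Proposition~\ref{prop:NormofTransfer} to $N_g\circ T_\nabla$, and computing the dependent product $\prod_g(C_2\amalg C_2)\cong\ast\amalg\ast\amalg C_2$---is exactly the standard derivation of this formula and is what lies behind the cited references. The identification of the three pieces of $T_{h'}N_{g'}R_{f'}$ is correct, and your caution about the conjugate bookkeeping on the free-orbit summand is well placed but, as you observe, ultimately moot because $tr_e^{C_2}$ is Weyl-invariant.
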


Since the Weyl action in our case is trivial, in fact a simpler
formula holds.

\begin{proposition}\label{prop:normofsum}
Let $p(x)\in\Z[x]$ be a polynomial of degree at most $(k-1)$. Then in
the free Tambara functor on one fixed generator $\mA^{\cO'}\![x_\ast]$,
we have  
\[
N_e^{C_2}\big(ax^k+p(x)\big)=N_e^{C_2}(a) N_e^{C_2}(x)^k+N_e^{C_2}\big(p(x)\big)+t\cdot \big(ax^k\cdot p(x)\big).
\]
\end{proposition}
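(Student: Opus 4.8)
The plan is to deduce this from Lemma~\ref{lem:NormofSum} (the norm of a sum) together with the multiplicativity of the norm and Frobenius reciprocity; no new computation is really needed. First I would record the relevant consequence of triviality of the Weyl action: since $x$ is the restriction to $C_2/e$ of the fixed generator $x_\ast$, it is fixed by the conjugation action, hence so is every element of the subring $\Z[x]\subset\mA^{\cO'}\![x_\ast](C_2/e)$, and in particular $\overline{p(x)}=p(\bar{x})=p(x)$. Writing $u=ax^k$ and $v=p(x)$ and applying Lemma~\ref{lem:NormofSum} to the decomposition $ax^k+p(x)=u+v$ then yields
\[
N_e^{C_2}\big(ax^k+p(x)\big)=N_e^{C_2}(ax^k)+N_e^{C_2}\big(p(x)\big)+tr_e^{C_2}\big(ax^k\cdot p(x)\big),
\]
where the conjugate $\bar{v}$ appearing in the lemma has been replaced by $v$.

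Next I would rewrite the two remaining terms. Because the norm is multiplicative and unital, $N_e^{C_2}(ax^k)=N_e^{C_2}(a)\cdot N_e^{C_2}(x^k)=N_e^{C_2}(a)\cdot N_e^{C_2}(x)^k$, which is the first summand on the right-hand side of the Proposition. For the transfer term I would use $t=tr_e^{C_2}(1)$ and the fact that $ax^k\cdot p(x)$, being a polynomial in $x=res_e^{C_2}(x_\ast)$, lifts along the restriction map (by Lemma~\ref{lem:FreeGreenC2}, $\Z[x_\ast]$ maps onto $\Z[x]$); Frobenius reciprocity $tr_e^{C_2}\big(1\cdot res_e^{C_2}(w)\big)=tr_e^{C_2}(1)\cdot w=t\cdot w$ then identifies $tr_e^{C_2}\big(ax^k p(x)\big)$ with $t\cdot\big(ax^k\cdot p(x)\big)$ as written (the product being formed against the fixed lift, which is the same identification already implicit in the statement). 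Assembling the three pieces gives the claimed identity.

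Finally I would remark that the hypothesis $\deg p\le k-1$ plays no role in the derivation: the formula is simply the universal norm-of-a-sum identity applied to the ordered splitting ``leading term plus lower-order terms'', and the degree bound serves only to make that splitting canonical, so that the Proposition can be iterated to compute $N_e^{C_2}$ of an arbitrary polynomial by downward induction on degree (with the base cases $N_e^{C_2}(a)$ supplied by the Burnside-ring norm formulae). The only mildly delicate point is the bookkeeping in the transfer step — keeping track of which elements live at level $C_2/e$ versus $C_2/C_2$ and checking that the lift of $ax^k p(x)$ along $res_e^{C_2}$ is what makes $t\cdot(ax^k p(x))$ well defined — but this is routine given the explicit description of the restriction map in Lemma~\ref{lem:FreeGreenC2}.
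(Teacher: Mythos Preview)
Your argument is correct and is exactly the approach the paper intends: the proposition is stated without proof, prefaced only by ``Since the Weyl action in our case is trivial, in fact a simpler formula holds,'' so the derivation from Lemma~\ref{lem:NormofSum} via multiplicativity of the norm and Frobenius reciprocity is precisely what is meant. One small correction: your citation of Lemma~\ref{lem:FreeGreenC2} for the lift of $ax^k p(x)$ is misplaced, since that lemma concerns the free Green functor on an \emph{underlying} generator $x_{C_2}$, not on a fixed one; the lift you need is simply the observation (which you already made) that $x=res_e^{C_2}(x_\ast)$, so every element of $\Z[x]$ is a restriction.
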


We record all of this explicitly in the following lemma.

\begin{lemma}\label{lem:FreeTambaraonFixed}
We have an isomorphism of rings
\[
\mA^{\cO'}\![x_{\ast}](\ast)\cong \mA(\ast)[x_{\ast},n_{\ast}]/\big(t(x_{\ast}^{2}-n_{\ast})\big).
\]
We have an isomorphism of \(C_{2}\)-rings
\[
\mA^{\cO'}\![x_{\ast}](C_{2}/e)\cong \Z[x].
\]
The restriction map sends \(x_{\ast}\) to \(x\) and \(n_{\ast}\) to \(x^{2}\). The transfer is multiplication by \(t\). The norm is given by Proposition~\ref{prop:normofsum}, where \(N_{e}^{C_{2}}(x)=n_{\ast}\).
\end{lemma}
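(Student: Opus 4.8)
The plan is to reuse the polynomial bookkeeping from the proof of Lemma~\ref{lem:FreeGreenC2}: $\mA^{\cO'}[x_\ast](S) = \cP^{C_2}_{\cO'}(\ast, S)$ is the set of isomorphism classes of polynomials $\ast \xleftarrow{f} U \xrightarrow{g} V \xrightarrow{h} S$, the only difference from the Green case being that $\cO'$ is the complete indexing system, so the middle map $g$ is now allowed to be an \emph{arbitrary} map of finite $C_2$-sets. For $S = C_2/e$, the target is free, which forces $V$ and hence $U$ to be free; then $g$ and $h$ are built from fold maps and the conjugation, and $R_f$ contributes only copies of $x := \res_e^{C_2}(x_\ast)$. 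Exactly as in Lemma~\ref{lem:FreeGreenC2} this shows $\mA^{\cO'}[x_\ast](C_2/e)$ is the polynomial ring on $x$; as $x$ is the restriction of a fixed class it is Weyl-invariant, so this is $\Z[x]$ with trivial action, and $\res_e^{C_2}(x_\ast) = x$.

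For $S = \ast$, I would split the polynomial orbit by orbit. If $h$ maps an orbit of $V$ to $\ast$ isomorphically, the corresponding summand is $N_g R_f(x_\ast)$, a product over the orbits of $U$ of one factor $x_\ast$ for each fixed orbit and one factor $N_e^{C_2}(x)$ for each free orbit; writing $n_\ast := N_e^{C_2}(x)$, these contribute the monomials $x_\ast^i n_\ast^j$. If instead an orbit of $V$ is free, then $U$ is free over it, $N_g R_f(x_\ast)$ lies in $\Z[x]$, and $T_h = \tr_e^{C_2}$; since $x^k = \res_e^{C_2}(x_\ast^k)$, Frobenius reciprocity gives $\tr_e^{C_2}(x^k) = t\,x_\ast^k$ with $t = \tr_e^{C_2}(1)$. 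Hence $\mA^{\cO'}[x_\ast](\ast)$ is additively spanned by the $x_\ast^i n_\ast^j$ and the $t\,x_\ast^k$, so it is generated as a ring by $x_\ast$, $n_\ast$, $t$, with $t^2 = 2t$ coming from $\mA$; and the relation $t(x_\ast^2 - n_\ast) = 0$ is again Frobenius, since $\res_e^{C_2}(n_\ast) = x\bar x = x^2 = \res_e^{C_2}(x_\ast^2)$ forces $t n_\ast = \tr_e^{C_2}\res_e^{C_2}(n_\ast) = \tr_e^{C_2}\res_e^{C_2}(x_\ast^2) = t x_\ast^2$. This yields a ring surjection $\mA(\ast)[x_\ast, n_\ast]/\big(t(x_\ast^2 - n_\ast)\big) \twoheadrightarrow \mA^{\cO'}[x_\ast](\ast)$, and a short computation with the presentation shows the source has $\Z$-basis $\{x_\ast^i n_\ast^j\} \cup \{t\,x_\ast^k\}$, matching the spanning set just found.

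It remains to see this surjection is injective, which I would do by constructing the target directly and applying Yoneda. Using Tambara's description recalled in Lemma~\ref{lem:NormofSum} --- a $C_2$-Tambara functor is a $C_2$-Green functor with a unital multiplicative norm $N_e^{C_2}$ satisfying $\res_e^{C_2}N_e^{C_2}(a) = a\bar a$ and the norm-of-sum identity --- I would let $\m{Q}$ have underlying ring $\Z[x]$ with trivial action, fixed ring the presented ring, $\res_e^{C_2}$ sending $x_\ast \mapsto x$ and $n_\ast \mapsto x^2$, $\tr_e^{C_2}$ the additive map with $\tr_e^{C_2}(x^k) = t\,x_\ast^k$, and $N_e^{C_2}$ determined by $N_e^{C_2}(x) = n_\ast$, the Burnside formulas on $\Z$, and Proposition~\ref{prop:normofsum} on general polynomials. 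The Green-functor and Frobenius axioms are a direct check against the presentation (using $t n_\ast = t x_\ast^2$), the identity $\res_e^{C_2}N_e^{C_2}(x) = x^2$ holds by construction, and the norm-of-sum axiom is precisely Proposition~\ref{prop:normofsum}. Then $x_\ast \in \m{Q}(\ast)$ induces $\mA^{\cO'}[x_\ast] \to \m{Q}$ carrying $x_\ast, n_\ast, t, x$ to the same-named elements, and composing with the surjection above gives a ring endomorphism of $\m{Q}$ fixing all generators, hence the identity; thus the surjection is an isomorphism, and the statement at level $C_2/e$ is immediate. The main obstacle is the construction of $\m{Q}$ --- specifically, checking that $N_e^{C_2}$ is well-defined and multiplicative on $\Z[x]$, i.e.\ that iterating the norm-of-sum formula is consistent --- but this is exactly the content of Proposition~\ref{prop:normofsum} and Tambara's original computation, so no genuinely new input is needed.
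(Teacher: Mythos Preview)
Your polynomial bookkeeping at both levels is exactly the paper's argument: decompose $S_1$ into orbits, read off $x_\ast^i n_\ast^j$ from the fixed orbits and $t\,x_\ast^k$ from the free ones, and derive the relation $t(x_\ast^2-n_\ast)=0$ via $\tr\circ\res$. Where you diverge is injectivity. The paper simply notes that the monomials you have listed are pairwise non-isomorphic as polynomials, and since the hom-monoid in $\cP^{C_2}_{\cO'}$ is by construction the free commutative monoid on isomorphism classes of polynomials with connected $S_1$, its group completion is the free abelian group on those classes; that is the whole injectivity argument, in one line.

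Your alternative---building $\m{Q}$ by hand and using Yoneda to produce a section of the surjection---is logically sound and has the virtue of not relying on the reader knowing how the additive structure on $\cP^G_{\cO}$ arises. But it comes at real cost: verifying that your recursively defined $N_e^{C_2}$ on $\Z[x]$ is well-defined, multiplicative, and satisfies the exponential-diagram axiom is precisely the work that the polynomial-category framework packages for free, and your appeal to ``Tambara's original computation'' is doing more than you let on. There is also a whiff of circularity in invoking Proposition~\ref{prop:normofsum}, which is stated \emph{in} the free Tambara functor, to construct $\m{Q}$; you really want only Lemma~\ref{lem:NormofSum} (valid in any Tambara functor) plus the trivial Weyl action, and then the consistency check is genuinely yours to do. None of this is fatal, but the paper's route is both shorter and avoids the deferred verification entirely.
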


\begin{proof}
The statement about the underlying ring is immediate. Also immediate from the Yoneda lemma is that the class \(x_{\ast}\) is represented by the polynomial
\[
x_{\ast}=\ast\leftarrow \ast\to\ast\to\ast.
\]
This restricts to the class
\[
x=\ast\leftarrow C_{2}\xrightarrow{1} C_{2}\xrightarrow{1} C_{2}.
\]

Let \(n_{\ast}\) be the polynomial
\[
n_{\ast}=\ast\leftarrow C_{2}\to\ast\to\ast.
\]
When we restrict this to \(C_{2}\), we get
\[
\ast\leftarrow{C_{2}\amalg C_{2}}\xrightarrow{\nabla} C_{2}\xrightarrow{1} C_{2},
\]
which is the square of the class \(x\) (here we use that we are mapping to \(\ast\) on the left to show that the choice of ways to write \(C_{2}\times C_{2}\) as \(C_{2}\amalg C_{2}\) do not matter). The relation \(t(x_{\ast}^{2}-n_{\ast})\) actually follows from this:
\[
t(x_{\ast}^{2}-n_{\ast})=tr_{e}^{C_{2}}\big(res_{e}^{C_{2}}(x^{2}_{\ast}-n_{\ast})\big)=tr_{e}^{C_{2}}\big(0\big).
\]

We again argue the rest via polynomials. Elements of the free Tambara functors are polynomials
\[
\ast\xleftarrow{f} S_{0}\xrightarrow{g} S_{1}\xrightarrow{h}\ast.
\]
Consider an orbit decomposition of \(S_{1}\):
\[
S_{1}=\left(\coprod_{S_{1}'}\ast\right)\amalg \left(\coprod_{S_{1}''}C_{2}\right).
\]
On the summand indexed by \(S_{1}'\), the map \(T_{h}\) gives the ordinary addition, while on the summand indexed by \(S_{1}''\), it gives the transfer. It therefore again suffices to consider \(S_{1}\) a single orbit.

When \(S_{1}=\ast\), then we consider the decomposition of \(S_{0}\) into orbits:
\[
S_{0}=\left(\coprod_{k=1}^{i}C_{2}\right)\amalg \left(\coprod_{\ell=1}^{j}\ast\right),
\]
and our polynomial represents
\[
n_{\ast}^{i}x_{\ast}^{j}.
\]
Moreover, these are all distinct and linearly independent.

When \(S_{1}=C_{2}\), then we must have that \(S_{0}\) is a union of copies of \(C_{2}\), and the map is just composites of the fold and multiplication by \(\gamma\) maps. Here, since the last term in the polynomial is also \(\ast\), we have an isomorphism
\[
(\ast\leftarrow C_{2}\xrightarrow{1} C_{2}\to \ast)\cong (\ast\leftarrow C_{2}\xrightarrow{\gamma} C_{2}\to\ast).
\]
In particular, we may assume that the map \(S_{0}\to C_{2}\) is just an iterated fold map:
\[
\coprod_{k=1}^{i}C_{2}\to C_{2}.
\] 
The fold map gives the multiplication, so this polynomial is 
\[
tr_{e}^{C_{2}}(x^{i}).
\]
Since 
\[
x^{i}=res_{e}^{C_{2}} x_{\ast}^{i},
\]
the \(\mA\)-module structure allows us to rewrite this polynomial as
\[
t\cdot x_{\ast}^{i}.
\]
This gives the result.
\end{proof}

Finally, we describe the structure of the free Tambara functor on a
free generator.

\begin{lemma}\label{lem:FreeTambaraonC2}
The ring underlying the free Tambara functor \(\mA^{\cO'}\![x_{C_{2}}]\) is \(\Z[x,\bar{x}]\).

The fixed ring for \(\mA^{\cO'}\![x_{C_{2}}]\) is
\begin{multline*}
\mA^{\cO}[x_{C_{2}}][n]/ \big(t_{i,j}\cdot n-t_{i+1,j+1}\big)\\
\cong \Z[t]/(t^2-2t)\big[t_i \mid 0\leq i\big][n]/\big(t_0-t, t_i\cdot t_j-(t_{i+j}+n^j t_{i-j}), i\leq j\big).
\end{multline*}

The restriction map is that of \(\mA^{\cO}[x_{C_{2}}]\), together with 
\[
res_{e}^{C_{2}}n=x\bar{x}.
\]
The norms are determined by 
\[
N_{e}^{C_{2}}(x)=N_{e}^{C_{2}}(\bar{x})=n.
\]
\end{lemma}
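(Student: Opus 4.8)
The plan is to reuse the polynomial bookkeeping from the proofs of Lemma~\ref{lem:FreeGreenC2} and Lemma~\ref{lem:FreeTambaraonFixed}, now applied to the generator $x_{C_2}$, and to organize the answer around the single norm that $\cO'$ adds to $\cO$ for $G=C_2$, namely $N_e^{C_2}$. The guiding identity is that $res_e^{C_2}N_e^{C_2}(x)=x\bar x$, so Frobenius reciprocity converts the new norm class $n := N_e^{C_2}(x)$ into a multiplier that simultaneously shifts both indices of the transferred generators $t_{i,j}$ of the free Green functor.

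For the underlying ring, an element of $\mA^{\cO'}[x_{C_2}](C_2/e)$ is a $\Z$-linear combination of isomorphism classes of polynomials $C_2\xleftarrow{f}S_0\xrightarrow{g}S_1\xrightarrow{h}C_2/e$. Since the target $C_2/e$ is a free $C_2$-set, $S_1$ and hence $S_0$ are forced to be free, so every map appearing preserves stabilizers and already lies in $\cO$; thus the set of polynomials here is exactly the one computed for the free Green functor, and the first part of Lemma~\ref{lem:FreeGreenC2} applies verbatim to give $\mA^{\cO'}[x_{C_2}](C_2/e)\cong\Z[x,\bar x]$, with the restriction of $x$ inherited from $\mA^{\cO}[x_{C_2}]$.

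For the fixed ring one analyzes polynomials $C_2\xleftarrow{f}S_0\xrightarrow{g}S_1\xrightarrow{h}\ast$. Decomposing $S_1$ into orbits and using that $T_h$ splits as a sum over these orbits (a trivial orbit contributing ordinary addition, a free orbit contributing $tr_e^{C_2}$) reduces to the case that $S_1$ is a single orbit. If $S_1=C_2$, then $S_0$ is free, the maps $f$ and $g$ record a label in $\{1,\gamma\}$ on each copy of $C_2$, and exactly as in Lemma~\ref{lem:FreeGreenC2} the polynomial represents $tr_e^{C_2}(x^a\bar x^b)=t_{a,b}$. If $S_1=\ast$ the new phenomenon appears: $g\colon S_0\to\ast$ is now unconstrained, while $f\colon S_0\to C_2$ still forces $S_0=\coprod_k C_2$; since $N_g$ is the $k$-fold product of copies of $N_e^{C_2}$ and $N_e^{C_2}(\bar x)=N_e^{C_2}(x)$ by Weyl-invariance of the norm, the polynomial represents $N_e^{C_2}(x)^k$. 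Setting $n:=N_e^{C_2}(x)$, these $n^k$ ($k\ge 1$) are the only additively indecomposable classes not already present in the free Green functor, and distinct indecomposables (different $k$, different $(a,b)$ up to the symmetry $t_{a,b}=t_{b,a}$, and $S_1=\ast$ versus $S_1=C_2$) are non-isomorphic polynomials, hence $\Z$-linearly independent. This identifies the additive basis $\{1\}\cup\{n^k:k\ge1\}\cup\{t_{a,b}:a\ge b\ge 0\}$, so the fixed ring is generated over $\mA^{\cO}[x_{C_2}](\ast)$ by $n$.

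It remains to pin down the relations and the structure maps. The one relation beyond those of $\mA^{\cO}[x_{C_2}]$ comes from Frobenius reciprocity together with $res_e^{C_2}N_e^{C_2}(x)=x\bar x$:
\begin{multline*}
t_{i,j}\cdot n = tr_e^{C_2}(x^i\bar x^j)\cdot N_e^{C_2}(x)\\
= tr_e^{C_2}\big(x^i\bar x^j\cdot res_e^{C_2}N_e^{C_2}(x)\big)= tr_e^{C_2}(x^{i+1}\bar x^{j+1})=t_{i+1,j+1},
\end{multline*}
and comparing with the additive basis above shows these are the only relations, yielding the first presentation. Substituting $t_i:=t_{i,0}$ and $t_{i,j}=n^{\min(i,j)}t_{|i-j|}$ then rewrites the presentation of $\mA^{\cO}[x_{C_2}](\ast)$ from Lemma~\ref{lem:FreeGreenC2} into the second displayed form. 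Finally, the restriction map is the one for $\mA^{\cO}[x_{C_2}]$ supplemented by $res_e^{C_2}n=res_e^{C_2}N_e^{C_2}(x)=x\bar x$; and since the underlying ring $\Z[x,\bar x]$ is generated by $x$ under the ring operations and the Weyl action, multiplicativity of the norm together with the norm-of-sum formula of Lemma~\ref{lem:NormofSum} shows that all norms are determined by $N_e^{C_2}(x)=N_e^{C_2}(\bar x)=n$. I expect the main obstacle to be the $S_1=\ast$ case of the polynomial analysis: one must check carefully that the isomorphisms of polynomials (permutations of the copies of $C_2$ in $S_0$, twists of individual copies by $\gamma$, and postcomposition of $g$ with the automorphism of $\ast$) identify exactly the polynomials representing a common power $n^k$ and introduce no unexpected coincidences with the Green-functor classes $t_{a,b}$, so that the $n^k$ genuinely are new and the additive basis is as claimed; the relation bookkeeping and the passage between the two presentations are then routine.
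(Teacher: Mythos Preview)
Your proof is correct and follows essentially the same approach as the paper: both analyze polynomials $C_2\leftarrow S_0\to S_1\to\ast$ by reducing to single orbits of $S_1$, identify the $S_1=C_2$ case with the Green-functor classes $t_{i,j}$ and the new $S_1=\ast$ case with the powers $n^k$, and then read off the multiplicative relations from Frobenius reciprocity together with $res_e^{C_2}n=x\bar{x}$. Your write-up is in fact somewhat more explicit than the paper's (you spell out the additive basis, derive the relation $t_{i,j}\cdot n=t_{i+1,j+1}$ directly, and explain the passage to the second presentation), but the strategy is the same.
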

\begin{proof}
The underlying ring is the same as the ring underlying the free Green functor on a class \(x_{C_{2}}\), so this part is clear.

For the fixed ring, we again argue via polynomials. The class \(n\) is represented by
\[
n= \big(C_{2}\xleftarrow{1} C_{2}\to \ast\to\ast\big),
\]
which restricts to 
\[
\big(C_{2}\xleftarrow{\nabla} C_{2}\amalg C_{2}\xrightarrow{1\amalg \gamma} C_{2}\to C_{2}\big)=x\bar{x}
\]
By definition, the class \(n\) is the norm of \(x\) and \(\bar{x}\).

We now mirror the computations of Lemma~\ref{lem:FreeGreenC2}. Since the source is \(C_{2}\), any polynomial can be written as
\[
C_{2}\xleftarrow{C_{2}\times q} C_{2}\times S_{0}\xrightarrow{g} S_{1}\xrightarrow{h}\ast,
\]
where \(q\) is the canonical quotient. Since we are considering the free Tambara functor, we can have any isotropy for the points of \(S_{1}\), and again, the map \(T_{h}\) gives a sum of transfers, depending on the stabilizers of points in \(S_{1}\). We therefore again are reduced to checking on individual orbits.

If \(S_{1}=\ast\), then by definition, the polynomial is just \(n^{|S_{0}|}\). If \(S_{1}=C_{2}\), then the polynomial is exactly one of the ones considered in Lemma~\ref{lem:FreeGreenC2}. In particular, any class is a linear combination of an element of \(\mA^{\cO}[x_{C_{2}}](C_{2}/C_{2})\) and an element of  \(\Z[n]\), and these two rings sit as subrings. The products between them are given by Frobenius reciprocity, from which the stated result follows.
\end{proof}

\section{The operadic right adjoint}\label{sec:main}

Using our abstract analysis of the free functor, we can easily give an
abstract description of the right adjoint.  Let $\cO\subset \cO'$ be
indexing systems, and let 
\[
{\OpRes}\colon\cO'\mhyphen\Tamb\to\OTamb
\]
be the restriction functor. 

\begin{theorem}\label{prop:ValuesofRightAdjoint}
We have a natural isomorphism for any finite $G$-set $T$
\[
F_{\cO}(\cO'\!,\m{R})(T)\cong \OTamb\Big({\OpRes}\big(\mA^{\cO'}\![x_T]\big),\m{R}\Big).
\]
The structure maps in $\m{R}$ are dual to ${\OpRes}$ of the co-structure maps in $\mA^{\cO'}\![x_{(-)}]$.
\end{theorem}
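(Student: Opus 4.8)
The plan is to reduce the statement to two facts already established: the ``co-Yoneda'' isomorphism of Proposition~\ref{prop:CoTambaraonFrees}, applied to the indexing system $\cO'$, and the adjunction $\OpRes\leftadjoint F_{\cO}(\cO',-)$. Since $F_{\cO}(\cO',\m{R})$ is a $\cO'$-Tambara functor, Proposition~\ref{prop:CoTambaraonFrees} supplies a canonical isomorphism of $\cO'$-Tambara functors between $F_{\cO}(\cO',\m{R})$ and the assignment $T\mapsto\cO'\mhyphen\Tamb\big(\mA^{\cO'}[x_T],F_{\cO}(\cO',\m{R})\big)$, under which the $\cO'$-Tambara structure maps of $F_{\cO}(\cO',\m{R})$ correspond to the maps dual to the co-restriction, co-transfer, and co-norm maps of $\mA^{\cO'}[x_{(-)}]$. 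I would then invoke the adjunction isomorphism
\[
\cO'\mhyphen\Tamb\big(\mA^{\cO'}[x_T],\,F_{\cO}(\cO',\m{R})\big)\;\cong\;\OTamb\big(\OpRes\mA^{\cO'}[x_T],\,\m{R}\big),
\]
which is natural in $\m{R}$ and in its first argument, and compose it with the previous isomorphism to obtain the asserted natural isomorphism $F_{\cO}(\cO',\m{R})(T)\cong\OTamb(\OpRes\mA^{\cO'}[x_T],\m{R})$.

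For the final sentence of the theorem I would use the naturality of the adjunction isomorphism in its first argument. Given a co-structure map $\delta\colon\mA^{\cO'}[x_S]\to\mA^{\cO'}[x_T]$ of $\mA^{\cO'}[x_{(-)}]$, the naturality square for the adjunction identifies precomposition with $\delta$ on the left-hand side with precomposition with $\OpRes\delta$ on the right-hand side; chasing this through the two isomorphisms above shows that, after the identification, the $\cO'$-Tambara structure maps of $F_{\cO}(\cO',\m{R})$ are precisely the duals of $\OpRes$ applied to the co-structure maps of $\mA^{\cO'}[x_{(-)}]$. It is also worth recording---though it is not needed for this argument---that $\OpRes\mA^{\cO'}[x_{(-)}]$ is itself a co-$\cO'$-Tambara functor object in $\OTamb$, so the right-hand side is visibly a $\cO'$-Tambara functor: the co-structure maps are morphisms in $\cO'\mhyphen\Tamb$, hence map under the functor $\OpRes$ to morphisms in $\OTamb$ satisfying the same relations, and $\OpRes$ preserves the coproduct identifications $\mA^{\cO'}[x_{T\amalg S}]\cong\mA^{\cO'}[x_T]\sqcup\mA^{\cO'}[x_S]$ because, being left adjoint to $F_{\cO}(\cO',-)$, it preserves all colimits.

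The only real obstacle I anticipate is the bookkeeping just described: verifying that the composite is an isomorphism of $\cO'$-Tambara functors---not merely a levelwise isomorphism of abelian groups natural in $T$ along a single polynomial---and that the structure maps match as claimed. This is formal, following from the naturality of the adjunction counit together with Proposition~\ref{prop:CoTambaraonFrees}, and needs no computation beyond what the preceding sections provide; the isomorphism on underlying abelian groups is immediate from the Yoneda lemma and the defining adjunction.
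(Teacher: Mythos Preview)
Your proposal is correct and follows essentially the same argument as the paper: apply Proposition~\ref{prop:CoTambaraonFrees} to the $\cO'$-Tambara functor $F_{\cO}(\cO',\m{R})$ to identify its values with $\cO'\mhyphen\Tamb\big(\mA^{\cO'}[x_T],F_{\cO}(\cO',\m{R})\big)$, and then pass through the adjunction $\OpRes\dashv F_{\cO}(\cO',-)$, with the structure-map claim handled by naturality. Your additional observations about $\OpRes\mA^{\cO'}[x_{(-)}]$ being a co-$\cO'$-Tambara object in $\OTamb$ and about $\OpRes$ preserving coproducts are correct but, as you note, unnecessary for the argument.
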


\begin{proof}
This is the composite of the isomorphism in
Proposition~\ref{prop:CoTambaraonFrees} and the adjunction: 
\[
F_{\cO}(\cO'\!,\m{R})(T)\cong
\cO'\mhyphen\Tamb\big(\mA^{\cO}[x_T],F_{\cO}(\cO',\m{R})\big)\cong\OTamb\Big({\OpRes}\big(\mA^{\cO'}\![x_T]\big),\m{R}\Big). 
\]
The statement about structure maps is the second part of
Proposition~\ref{prop:CoTambaraonFrees}. 
\end{proof}

To illustrate how this works, we work out explicitly what happens in
the case $G=C_2$, the cyclic group of order $2$, using our analysis
from above.  The answer is again completely analogous for $G = C_p$, $p$ an
odd prime.  However, there is one point where the formulae are
significantly prettier for $p=2$ (indicated below), so we focus on
this case.

We describe the answer in the following two theorems.  The first
characterizes the values of the right adjoint in terms of certain
pullback diagrams of multiplicative monoids.  The second omnibus
theorem describes formulas for the various operations that are
constituents of the Tambara functor structure.
We prove these results via a series of propositions and corollaries
establishing various parts.  For expositional convenience, we indicate
in the statements the supporting results which prove the individual
pieces.

\begin{theorem}[Corollaries~\ref{cor:RightAdjointSet}
  and~\ref{cor:MultiplicativeStructure}]
Let $\m{R}$ be a Green functor.  Then we have pullback squares of multiplicative monoids
\[
\xymatrix{
{F_{\cO}\big(\cO'\!,\m{R}\big)(C_2/C_2)}\ar[r]\ar[d] & {\m{R}(C_2/C_2)}\ar[d]^{\res} \\
{\m{R}(C_2/C_2)}\ar[r]_{(\res)^2} & {\m{R}(C_2/e),}
}
\]
where $\res$ is the restriction and $(\res)^2$ is the composite of the squaring map with the restriction, and
\[
\xymatrix{
{F_{\cO}\big(\cO'\!,\m{R}\big)(C_2/e)}\ar[r]\ar[d] & {\m{R}(C_2/C_2)}\ar[d]^{\res} \\
{\m{R}(C_2/e)}\ar[r]_{x\mapsto x\cdot\bar{x}} & {\m{R}(C_2/e),}
}
\]
where here $\bar{x}$ denotes the Weyl conjugate of $x$ and $x\mapsto
x\cdot\gamma x$ is the map sending an element to the product over the
group of its Weyl conjugates.  The multiplication in each square is
coordinate-wise.
\end{theorem}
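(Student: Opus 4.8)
The plan is to combine Theorem~\ref{prop:ValuesofRightAdjoint} with the explicit presentations of the free $C_2$-Tambara functors from Section~\ref{sec:freeC2}. Since for $G=C_2$ the only proper indexing system $\cO$ gives Green functors, $\m{R}$ is a Green functor and Theorem~\ref{prop:ValuesofRightAdjoint} identifies $F_{\cO}(\cO'\!,\m{R})(C_2/H)$ with the set $\OTamb\big({\OpRes}(\mA^{\cO'}\![x_{C_2/H}]),\m{R}\big)$ of Green functor maps out of the Green functor underlying the free $\cO'$-Tambara functor on $x_{C_2/H}$. The first task is to describe such maps concretely, level by level, which yields the two pullback squares as diagrams of sets; the second task is to match the multiplicative monoid structure, which by Proposition~\ref{prop:CoTambaraonFrees} and Definition~\ref{def:coRing} is induced by the co-operations $\Delta_\times$ and $\epsilon_\times$ on $\mA^{\cO'}\![x_{C_2/H}]$.

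For the set-level statement I would proceed one orbit at a time. When $H=C_2$, Lemma~\ref{lem:FreeTambaraonFixed} presents ${\OpRes}(\mA^{\cO'}\![x_\ast])$ as the Green functor with underlying $C_2$-ring the polynomial ring $\Z[x]$ (trivial Weyl action), fixed ring $\mA(\ast)[x_\ast,n_\ast]/\big(t(x_\ast^2-n_\ast)\big)$, restriction $x_\ast\mapsto x$ and $n_\ast\mapsto x^2$, and transfer equal to multiplication by $t$. A Green functor map $\phi$ to $\m{R}$ is an $\mA$-algebra map on each level that is equivariant and commutes with restriction and transfer, and I would check that $\phi$ is determined by the pair $(\nu,\alpha):=(\phi(n_\ast),\phi(x_\ast))\in\m{R}(C_2/C_2)^2$: its underlying-level component is forced ($\Z[x]$ is generated by $x=\res x_\ast$, whose image must be $\res\alpha$), compatibility with the transfer is automatic from Frobenius reciprocity in $\m{R}$, and the lone relation $t(x_\ast^2-n_\ast)=0$ is implied by $\res\nu=(\res\alpha)^2$ because $t(\alpha^2-\nu)=tr_e^{C_2}\big((\res\alpha)^2-\res\nu\big)=tr_e^{C_2}(0)$. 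The only surviving constraint is $\res\nu=(\res\alpha)^2$, which is exactly the first pullback square (the top-right copy of $\m{R}(C_2/C_2)$ recording $\nu$ and the bottom-left copy recording $\alpha$). When $H=e$, Lemma~\ref{lem:FreeTambaraonC2} presents ${\OpRes}(\mA^{\cO'}\![x_{C_2}])$ with underlying ring $\Z[x,\bar x]$ (Weyl action swapping the generators), fixed ring the one displayed there, and $\res n=x\bar x$; using Lemma~\ref{lem:FreeGreenC2} to recognise the classes $t_{i,j}$ as the transfers $tr_e^{C_2}(x^i\bar x^j)$, I would show that a Green functor map $\phi$ is determined by $(u,\nu):=(\phi(x),\phi(n))\in\m{R}(C_2/e)\times\m{R}(C_2/C_2)$: equivariance forces $\phi(\bar x)=\bar u$, transfer-compatibility forces $\phi(t_{i,j})=tr_e^{C_2}(u^i\bar{u}^j)$, and every relation of the fixed ring --- in particular $t_{i,j}\,n=t_{i+1,j+1}$ and the products $t_it_j$ --- follows from Frobenius reciprocity once $\res\nu=u\bar{u}$. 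That identity, which is restriction-compatibility for $n$, is the only constraint, giving the second pullback square.

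For the multiplicative structure, I would note that the forgetful functor from commutative monoids to sets creates limits, so a pullback of monoids is the pullback of underlying sets with coordinatewise multiplication; it therefore suffices to show that under the bijections above the two projections out of $F_{\cO}(\cO'\!,\m{R})(C_2/H)$ --- namely $\phi\mapsto\phi(x_{C_2/H})$ and $\phi\mapsto\phi(n_\ast)$ (respectively $\phi\mapsto\phi(n)$) --- are monoid homomorphisms for the ring structure induced by $\Delta_\times$ and $\epsilon_\times$. By construction $\Delta_\times$ sends $x_{C_2/H}$ to $x_{C_2/H}\cdot y_{C_2/H}$ and $\epsilon_\times$ sends it to $1$; and since $n_\ast=N_e^{C_2}(\res x_\ast)$ and $n=N_e^{C_2}(x_{C_2})$ while $\Delta_\times$ and $\epsilon_\times$ are maps of $\cO'$-Tambara functors, multiplicativity of the norm gives $\Delta_\times(n_\ast)=n_\ast\cdot m_\ast$ (with $m_\ast:=N_e^{C_2}(\res y_\ast)$) and $\epsilon_\times(n_\ast)=1$, and likewise for $n$. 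These identities are unchanged by applying ${\OpRes}$, which alters no underlying map, so evaluation at $x_{C_2/H}$ and at $n_\ast$ (respectively $n$) is visibly a ring homomorphism; hence the multiplication and unit on $F_{\cO}(\cO'\!,\m{R})(C_2/H)$ are coordinatewise, as claimed.

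I expect the main obstacle to be the bookkeeping in the set-level identifications: one must verify that \emph{every} defining relation of the free Tambara functors is forced by restriction-compatibility --- so that the answer is the whole pullback rather than a proper submonoid --- and that \emph{no} additional freedom is hiding at the underlying level. Both of these come down to Frobenius reciprocity and the Mackey double-coset formula in $\m{R}$. A secondary subtlety, which the argument above is arranged to handle, is that after applying ${\OpRes}$ the class $n_\ast$ (respectively $n$) is no longer literally a norm, so one has to be sure its image under ${\OpRes}(\Delta_\times)$ and ${\OpRes}(\epsilon_\times)$ is still given by the norm formulas; this holds precisely because those are identities already valid in $\mA^{\cO'}\![x_{C_2/H}]$ and hence survive forgetting the norm.
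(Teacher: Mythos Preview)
Your proposal is correct and uses the same ingredients as the paper: Theorem~\ref{prop:ValuesofRightAdjoint} together with the presentations of Lemmas~\ref{lem:FreeTambaraonFixed} and~\ref{lem:FreeTambaraonC2}, and the norm-multiplicativity argument for $\Delta_\times$. The one organizational difference is worth noting. Where you parametrize Green functor maps out of ${\OpRes}\mA^{\cO'}\![x_{C_2/H}]$ directly and then verify by hand that every relation (the $t(x_\ast^2-n_\ast)$ relation, the $t_{i,j}n=t_{i+1,j+1}$ relations, etc.) is forced by the single restriction constraint, the paper instead recognizes ${\OpRes}\mA^{\cO'}\![x_{C_2/H}]$ as a pushout of free Green functors (Propositions~\ref{prop:FixedATamb} and~\ref{prop:UnderlyingATamb}): roughly, $\mA^{\cO}[x_{C_2/H}]$ and $\mA^{\cO}[n_\ast]$ glued along $\mA^{\cO}[y_{C_2}]$ via the maps encoding exactly your constraint. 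Applying $\OTamb(-,\m{R})$ to a pushout of representables yields a pullback of the represented sets immediately, which eliminates the relation-by-relation bookkeeping you flag as the main obstacle in your final paragraph. Your direct verification amounts to a proof of those pushout squares; the paper's framing buys you that all defining relations are automatically handled once the pushout is established, and it makes the multiplicative statement (your third paragraph, the paper's Proposition~\ref{prop:CpCoMultiplication}) a one-line observation that the structure maps in the pushout are monomial and hence compatible with $\Delta_\times$.
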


Since the values of \(F_{\cO}\big(\cO'\!,\m{R}\big)(C_{2}/H)\) are pullbacks, we will name elements as ordered pairs, the first element of which will always come from the top right set in the square and the second will come from the bottom left set. 

\begin{theorem}
If \(\mR\) is a commutative \(C_{2}\)-Green functor, then the Tambara structure on \(F_{\cO}\big(\cO'\!,\m{R}\big)\) is given by the following series of results

\begin{enumerate}

\item (Corollary~\ref{cor:FixedAddition}) The addition on $F_{\cO}(\cO'\!,
  \m{R})(C_2/C_2)$ is given by
\[
(n,x)+(n',x')=(n+n'+t\cdot x\cdot x',x+x')
\]
\vspace{5 pt}
\item (Corollary~\ref{cor:UnderlyingAddition}) The addition on $F_{\cO}(\cO'\!,\m{R})(C_2/e)$ is given by
\[
(n,x)+(n',x')=\big(n+n'+\tr(x\cdot\bar{x}'),x+x'\big)
\]
\vspace{5 pt}
\item (Corollary~\ref{cor:RightAdjointNorm}) The norm map 
\[
N_e^{C_2} \colon F_{\cO}(\cO'\!,\m{R})(C_2/e) \to F_{\cO}(\cO'\!,\m{R})(C_2/C_2)
\]
is specified by
\[
(n,x)\mapsto (n^2,n).
\]
\vspace{5 pt}
\item (Corollary~\ref{cor:RightAdjointRestriction}) The restriction 
\[
\res_e^{C_2} \colon F_{\cO}(\cO'\!,\m{R})(C_2/C_2) \to F_{\cO}(\cO'\!,\m{R})(C_2/e)
\]
is given by
\[
(n,x)\mapsto\big(n,\res(x)\big).
\]
\vspace{5 pt}
\item (Corollary~\ref{cor:RightAdjointTransfer}) The transfer 
\[
\tr_e^{C_2} \colon F_{\cO}(\cO'\!,\m{R})(C_2/e) \to F_{\cO}(\cO'\!,\m{R})(C_2/C_2)
\]
is specified by
\[
(n,x)\mapsto \big(2n+\tr(x^2),\tr(x)\big).
\]
\end{enumerate}
\end{theorem}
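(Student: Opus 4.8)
The plan is to reduce every one of the five assertions to a Yoneda computation with the co-$\cO'$-Tambara structure on $\mA^{\cO'}\![x_{(-)}]$. By Theorem~\ref{prop:ValuesofRightAdjoint}, an element of $F_{\cO}\big(\cO'\!,\m{R}\big)(C_2/H)$ is exactly a map of Green functors ${\OpRes}\big(\mA^{\cO'}\![x_{C_2/H}]\big)\to\m{R}$, and each constituent of the Tambara structure on $F_{\cO}\big(\cO'\!,\m{R}\big)$ is precomposition with ${\OpRes}$ of the corresponding co-structure map of $\mA^{\cO'}\![x_{(-)}]$ from Definitions~\ref{def:coRing} and~\ref{def:CoTNR}. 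Using the presentations of the free functors in Lemmas~\ref{lem:FreeTambaraonFixed} and~\ref{lem:FreeTambaraonC2}, I would first note that such a Green functor map is pinned down by the images of the two fixed-ring generators $x_{C_2/H}$ and $n$ (written $x_\ast$, $n_\ast$ when $H=C_2$): the images of all the $t_i$, of $\bar{x}$, etc.\ are forced by equivariance and by compatibility with transfers and restrictions. Matching those images against the two legs of the relevant pullback square of the first theorem then identifies the ordered pair $(n,x)$ with (image of the norm generator, image of $x_{C_2/H}$), which is the bookkeeping convention the statement adopts.

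Next I would go through the operations one at a time. Each co-structure map is prescribed in Definitions~\ref{def:coRing}--\ref{def:CoTNR} only through its value on $x_{C_2/H}$, so the work is to propagate that value to the norm generator, using that these co-structure maps are honest $\cO'$-Tambara maps (so they commute with norm, restriction, and transfer before one applies ${\OpRes}$) together with the norm-of-a-sum formula of Lemma~\ref{lem:NormofSum}; after that one precomposes with the classifying map(s) of the given element(s) and reads off coordinates. Concretely: co-addition sends $x_\ast\mapsto x_\ast+y_\ast$, so $n_\ast\mapsto N_e^{C_2}\big(\res(x_\ast)+\res(y_\ast)\big)=n_\ast^{(x)}+n_\ast^{(y)}+\tr\big(\res(x_\ast)\res(y_\ast)\big)$ by Lemma~\ref{lem:NormofSum} (the Weyl twist is trivial on the restriction of a fixed class), and precomposition plus Frobenius reciprocity, rewriting $\tr\big(\res(x_\ast)\res(x'_\ast)\big)$ as $t\cdot x\cdot x'$, gives item (1). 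Item (2) is the same computation for $x_{C_2}\mapsto x_{C_2}+y_{C_2}$, where now the cross term is the genuinely Weyl-twisted $\tr(x\bar{x}')$. Co-restriction sends $x_{C_2}\mapsto\res(x_\ast)$ and $n\mapsto N_e^{C_2}\big(\res(x_\ast)\big)=n_\ast$, giving item (4). Co-transfer sends $x_\ast\mapsto\tr(x_{C_2})$, hence $n_\ast\mapsto N_e^{C_2}\big(\res(\tr(x_{C_2}))\big)=N_e^{C_2}(x_{C_2}+\bar{x}_{C_2})=2n+\tr(x_{C_2}^2)$ by Lemma~\ref{lem:NormofSum}, giving item (5). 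Co-norm sends $x_\ast\mapsto N_e^{C_2}(x_{C_2})=n$, so by compatibility with restriction $\res(x_\ast)\mapsto\res(n)=x_{C_2}\bar{x}_{C_2}$ and $n_\ast=N_e^{C_2}\big(\res(x_\ast)\big)\mapsto N_e^{C_2}(x_{C_2}\bar{x}_{C_2})=n^2$ by multiplicativity of the norm, giving item (3).

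The one genuinely non-formal step — where I expect the difficulty to be — is precisely that the co-structure maps come to us only through their value on $x_{C_2/H}$: determining their effect on the norm generator forces one to invoke Lemma~\ref{lem:NormofSum}, to keep careful track of the Weyl action (trivial when the argument is the restriction of a fixed class, as in items (1), (4), (5), but not in item (2)), and then to re-express the resulting transfers of restrictions via Frobenius reciprocity to land on the clean formulas in the statement. Everything else is formal manipulation of the adjunction of Theorem~\ref{prop:ValuesofRightAdjoint} and the presentations in Lemmas~\ref{lem:FreeTambaraonFixed} and~\ref{lem:FreeTambaraonC2}, so once this bookkeeping is carried out, each of the five corollaries drops out.
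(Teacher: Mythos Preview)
Your proposal is correct and follows essentially the same route as the paper: identify elements via Theorem~\ref{prop:ValuesofRightAdjoint} with Green functor maps out of ${\OpRes}\mA^{\cO'}\![x_{C_2/H}]$, observe that such a map is determined by the images of $x_{C_2/H}$ and the norm generator $n_\ast$, and then compute each co-structure map on $n_\ast$ using Lemma~\ref{lem:NormofSum} and multiplicativity/Weyl-invariance of the norm. The only cosmetic difference is that the paper packages the ``determined by two generators'' observation as explicit pushout squares of free Green functors (Propositions~\ref{prop:FixedATamb} and~\ref{prop:UnderlyingATamb}) before dualizing, whereas you work directly from the presentations in Lemmas~\ref{lem:FreeTambaraonFixed} and~\ref{lem:FreeTambaraonC2}; the computations themselves are identical.
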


\begin{remark}
The formula for the norm perhaps best exemplifies the weirdness of the right adjoint. Here, the coordinate actually coming from \(\mR(C_{2}/e)\) (the element \(x\) above) is completely ignored in forming the norm.
\end{remark}

To prove this theorem, we repeatedly apply
Theorem~\ref{prop:ValuesofRightAdjoint}, which tells us that we need
only determine the restriction to Green functors of the two free Tambara functors described in Section~\ref{ssec:FreeTambara}: 
\[
{\OpRes}\mA^{\cO'}\![x_{\ast}]\text{ and }{\OpRes}\mA^{\cO'}\![x_{C_2}],
\]
together with the co-Tambara functor structure maps
\[
\Delta_{+}, \Delta_{\times}, \Delta_{r}, \Delta_{t}, \text{ and }\Delta_n,
\]
where $r$, $t$, and $n$ are the restriction, transfer, and norm
associated to $C_2\to\ast$.  To this end, we apply the work of Section~\ref{sec:freeC2}.

Since the free Green functor on two fixed generators has fixed points
\[
\mA(C_{2}/C_{2})[x,n]
\]
and underlying 
\[
\Z[x,n],
\]
we see the the restriction of \(\mA^{\cO'}\![x_{\ast}]\) to a Green
functor is just the quotient of the free Green functor on two
generators by the Green ideal generated by the class \(x^{2}-n\). This
gives the following proposition. 

\begin{proposition}\label{prop:FixedATamb}
We have a pushout diagram of commutative Green functors
\begin{center}
\begin{tikzcd}
\mA^{\cO}[y_{C_2}]
	\arrow[r, "R_n"]
	\arrow[d, "R_x"']
	&
\mA^{\cO}[n_{\ast}]
	\arrow[d]
	\\
\mA^{\cO}[x_{\ast}]
	\arrow[r]
	&
i^{\cO'}_{\cO} \mA^{\cO'}\![x_{\ast}],
\end{tikzcd}
\end{center}
where $R_n$ is the map adjoint to the element
\[
res_e^{C_2} n_\ast\in \mA^{\cO}[n_\ast](C_2/e)
\]
and where $R_x$ is the map adjoint to the element
\[
res_e^{C_2}(x_{\ast}^2)=\big(res_e^{C_2}x_{\ast}\big)^2\in \mA^{\cO}[x_{\ast}](C_2/e).
\]
In the underlying Tambara functor structure, we have
\[
n_\ast=N_e^{C_2} \big(res_e^{C_2} x_\ast\big).
\]
\end{proposition}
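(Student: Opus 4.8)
The plan is to verify the pushout property directly from its universal characterization: a commuting square of commutative Green functors is a pushout exactly when, for every Green functor $\m{S}$, applying $\Green(-,\m{S})$ turns it into a pullback square of sets. Since $\cO$ is the trivial indexing system here, $\Green = \OTamb$, and Proposition~\ref{prop:CoTambaraonFrees} identifies $\Green(\mA^{\cO}[x_\ast],\m{S}) \cong \m{S}(\ast)$, $\Green(\mA^{\cO}[n_\ast],\m{S}) \cong \m{S}(\ast)$, and $\Green(\mA^{\cO}[y_{C_2}],\m{S}) \cong \m{S}(C_2/e)$. Under these identifications, precomposition with $R_x$ is the map $\m{S}(\ast)\to\m{S}(C_2/e)$, $a\mapsto \res_e^{C_2}(a)^2$ (because $R_x$ sends the universal underlying class to $\res_e^{C_2}(x_\ast^2) = (\res_e^{C_2}x_\ast)^2$), while precomposition with $R_n$ is $b\mapsto \res_e^{C_2}(b)$. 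So the relevant pullback is the set $\{(a,b)\in\m{S}(\ast)^2 : \res_e^{C_2}(a)^2 = \res_e^{C_2}(b)\}$, together with its two projections to $\m{S}(\ast)$.

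The key step is then to compute $\Green\big(i^{\cO'}_{\cO}\mA^{\cO'}\![x_\ast],\m{S}\big)$ from the presentation in Lemma~\ref{lem:FreeTambaraonFixed}: the underlying Green functor of $\mA^{\cO'}\![x_\ast]$ has fixed ring $\mA(\ast)[x_\ast,n_\ast]/\big(t(x_\ast^2-n_\ast)\big)$, underlying ring $\Z[x]$, restriction $x_\ast\mapsto x$, $n_\ast\mapsto x^2$, and transfer given by multiplication by $t$. Hence a Green-functor map to $\m{S}$ amounts to a choice of images $a,b\in\m{S}(\ast)$ of $x_\ast,n_\ast$ — the image of $x$ being forced to be $\res_e^{C_2}(a)$ — subject to the relations $t(a^2-b)=0$, $\res_e^{C_2}(b)=\res_e^{C_2}(a)^2$, and the transfer-compatibility $\tr_e^{C_2}\big(\res_e^{C_2}(a)^i\big)=t\cdot a^i$. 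Here I would use that in any Green functor $t\cdot y = \tr_e^{C_2}\res_e^{C_2}(y)$, since $t=\tr_e^{C_2}(1)$ and Frobenius reciprocity applies; this shows that both $t(a^2-b)=0$ and the transfer-compatibility identities follow automatically from the single equation $\res_e^{C_2}(b)=\res_e^{C_2}(a)^2$. So the set of maps is exactly $\{(a,b):\res_e^{C_2}(b)=\res_e^{C_2}(a)^2\}$, the composites with the maps from $\mA^{\cO}[x_\ast]$ and $\mA^{\cO}[n_\ast]$ being $a$ and $b$ respectively; naturality in $\m{S}$ is clear. This matches the pullback above and establishes the pushout.

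Equivalently — and this is the reformulation the calculation makes transparent — the pushout is the quotient of the coproduct $\mA^{\cO}[x_\ast]\Box_{\mA}\mA^{\cO}[n_\ast]\cong\mA^{\cO}[x_\ast,n_\ast]$ by the Green ideal generated at level $C_2/e$ by $\res_e^{C_2}(x_\ast^2)-\res_e^{C_2}(n_\ast)$. The step I expect to be the real obstacle is pinning down this Green ideal: because $y_{C_2}$ is a free (underlying) generator, the relation is imposed only at level $C_2/e$, and one must check both that its closure under transfer contributes exactly the principal ideal $\big(t(x_\ast^2-n_\ast)\big)$ at the fixed level — not $\big(x_\ast^2-n_\ast\big)$ — and that this is already closed under restriction, so no further relations appear; Frobenius reciprocity is what makes both checks go through, and it is exactly the reason the two collapsed relations in the previous paragraph are redundant. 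Finally, the last sentence of the proposition is immediate: Lemma~\ref{lem:FreeTambaraonFixed} specifies the norm on $\mA^{\cO'}\![x_\ast]$ by $N_e^{C_2}(x)=n_\ast$, and since $x=\res_e^{C_2}x_\ast$ this reads $n_\ast = N_e^{C_2}\big(\res_e^{C_2}x_\ast\big)$.
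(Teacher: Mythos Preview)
Your proposal is correct and ultimately rests on the same observation as the paper: the paper's entire argument is the sentence preceding the proposition, which notes (using Lemma~\ref{lem:FreeTambaraonFixed} and Corollary~\ref{cor:FreeGreenFixed}) that $i^{\cO'}_{\cO}\mA^{\cO'}\![x_\ast]$ is the free Green functor on two fixed generators modulo the Green ideal generated by $x^2-n$ at the underlying level --- exactly your second-paragraph reformulation. Your first paragraph's verification via $\Green(-,\m{S})$ is a more careful unpacking of this same fact through the universal property, and your Frobenius-reciprocity check that the relations $t(a^2-b)=0$ and transfer compatibility are forced by $\res(b)=\res(a)^2$ is precisely what makes the Green-ideal computation go through.
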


The analysis for the free Tambara functor on an underlying generator is similar. We again observe that although the fixed ring is quite messy, we have a simple ``generators and relations'' way to interpret this Green functor. We have a map of Green functors 
\[
\mA^{\cO}[x_{C_{2}},n_{\ast}]\to i^{\cO'}_{\cO}\mA^{\cO'}\![x_{C_{2}}]
\]
which is a surjection upon evaluating at any finite \(C_{2}\)-set. The complicated relations we see in the fixed ring are again in the image of the transfer and arise from the more basic formula
\begin{equation}\label{eqn:ResofNorm}
res_{e}^{C_{2}}n=x\bar{x},
\end{equation}
which is just the multiplicative double coset formula expressing the restriction of a norm. This observation gives the following proposition.

\begin{proposition}\label{prop:UnderlyingATamb}
We have a pushout diagram of commutative Green functors
\begin{center}
\begin{tikzcd}
\mA^{\cO}[y_{C_2}]
	\arrow[r, "R"]
	\arrow[d, "N"']
	&
\mA^{\cO}[n_{\ast}]
	\arrow[d]
	\\
\mA^{\cO}[x_{C_2}]
	\arrow[r]
	&
i^{\cO'}_{\cO} \mA^{\cO'}\![x_{C_2}],
\end{tikzcd}
\end{center}

where $R$ is the map adjoint to the element 
\[
res_{e}^{C_2} n_\ast\in \mA^{\cO}[n_{\ast}](C_2/e)
\]
and where $N$ is the map adjoint to the element
\[
x_{C_2}\cdot \bar{x}_{C_2}\in \mA^{\cO}[x_{C_2}](C_2/e),
\]
where $\bar{x}_{C_2}$ is the Weyl conjugate. In the underlying Tambara functor structure, we have
\[
n_\ast=N_{e}^{C_2} x_{C_2}.
\]
\end{proposition}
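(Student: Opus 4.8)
The plan is to identify the free Tambara functor $\mA^{\cO'}[x_{C_2}]$ as a pushout of free Green functors by exhibiting a surjection of Green functors from the left-hand pushout and checking that it induces an isomorphism level by level. First I would construct the comparison map: the Green functor $\mA^{\cO}[y_{C_2}]$ maps into $\mA^{\cO}[n_\ast]$ via $R$ adjoint to $\res^{C_2}_e n_\ast$ (which by Corollary~\ref{cor:FreeGreenFixed} and Lemma~\ref{lem:FreeGreenC2} we understand concretely), and into $\mA^{\cO}[x_{C_2}]$ via $N$ adjoint to $x_{C_2}\cdot\bar x_{C_2}\in\mA^{\cO}[x_{C_2}](C_2/e)$; the square commutes because both composites are adjoint to the same underlying element, namely $\res^{C_2}_e$ of the norm, using the multiplicative double-coset formula~\eqref{eqn:ResofNorm}. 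Then $\cO$-Tambara (= Green) adjunction produces a canonical map from the pushout to ${\OpRes}\mA^{\cO'}[x_{C_2}]$, classified by the fact that in the target, $n_\ast$ and $N^{C_2}_e x_{C_2}$ agree.

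The main work is surjectivity and injectivity of this map at levels $C_2/e$ and $C_2/C_2$. Underlying, both sides have ring $\Z[x,\bar x]$ by Lemma~\ref{lem:FreeTambaraonC2} and the computation of the free Green functor on two generators, and one checks the map is the identity on generators, hence an isomorphism. At the fixed level, I would use the explicit presentation of $\mA^{\cO'}[x_{C_2}](C_2/C_2)$ from Lemma~\ref{lem:FreeTambaraonC2}, namely $\mA^{\cO}[x_{C_2}][n]/(t_{i,j}n-t_{i+1,j+1})$, and match it against the pushout $\mA^{\cO}[x_{C_2}](C_2/C_2)\otimes_{\mA^{\cO}[y_{C_2}](C_2/C_2)}\mA^{\cO}[n_\ast](C_2/C_2)$. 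Here one reads off that $\mA^{\cO}[n_\ast](C_2/C_2)$ is generated over $\mA(C_2/C_2)$ by $n$ together with transferred classes $\tr(n^k)$, and that the tensor product over $\mA^{\cO}[y_{C_2}](C_2/C_2)$ precisely imposes $\tr(\res(y)^k\cdot(-)) \leftrightarrow$ the relation $t_{i,j}n = t_{i+1,j+1}$ via Frobenius reciprocity, since $\res y$ becomes $\res n = x\bar x$ on one side and $\res$ of the free Green generator on the other.

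The hard part will be verifying injectivity at the fixed level: one must show that the relations in the explicit presentation of $\mA^{\cO'}[x_{C_2}](C_2/C_2)$ from Lemma~\ref{lem:FreeTambaraonC2} are \emph{exactly} the relations coming from the pushout — that is, that the evident surjection $\mA^{\cO}[x_{C_2},n_\ast](C_2/C_2)\twoheadrightarrow\mA^{\cO'}[x_{C_2}](C_2/C_2)$ has kernel generated by $\res^{C_2}_e n_\ast - x_{C_2}\bar x_{C_2}$ as a Green ideal (equivalently, that no further relations appear after group-completion). I would argue this by tracking bases: Lemma~\ref{lem:FreeTambaraonC2} exhibits $\mA^{\cO'}[x_{C_2}](C_2/C_2)$ as an explicit quotient with a monomial-type basis, and the pushout presentation gives a spanning set indexed by pairs (monomial in $\mA^{\cO}[x_{C_2}]$, power of $n$) modulo the Frobenius-reciprocity identifications; a counting/normal-form argument shows these spanning sets biject, forcing the map to be an isomorphism. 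The analogous argument for Proposition~\ref{prop:FixedATamb} is strictly easier since there the free Green functor on two fixed generators has fixed ring simply $\mA(C_2/C_2)[x,n]$, so the pushout is literally the quotient by the single Green ideal $(x^2-n)$, matching Lemma~\ref{lem:FreeTambaraonFixed} directly.
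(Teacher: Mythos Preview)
Your approach is essentially the paper's own: the text preceding the proposition already observes that there is a surjection of Green functors $\mA^{\cO}[x_{C_2},n_\ast]\to i^{\cO'}_{\cO}\mA^{\cO'}[x_{C_2}]$ and that all of the relations in the fixed ring of Lemma~\ref{lem:FreeTambaraonC2} are transfers of multiples of the single underlying relation $\res_e^{C_2} n - x\bar x$, which is exactly the content of the pushout. Your outline fills in the details the paper leaves implicit.

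One small caution: writing the fixed level of the pushout as the ring-theoretic tensor product $\mA^{\cO}[x_{C_2}](C_2/C_2)\otimes_{\mA^{\cO}[y_{C_2}](C_2/C_2)}\mA^{\cO}[n_\ast](C_2/C_2)$ is not literally correct, since pushouts of Green functors (box products) do not in general commute with evaluation at a level. What you actually need, and what your subsequent argument in fact uses, is that the pushout is the quotient of $\mA^{\cO}[x_{C_2},n_\ast]$ by the Green ideal generated by the underlying element $\res_e^{C_2} n_\ast - x_{C_2}\bar x_{C_2}$; at the fixed level this ideal consists of transfers of multiples of that element, and Frobenius reciprocity then gives exactly the relations $t_{i,j}\cdot n - t_{i+1,j+1}$, matching Lemma~\ref{lem:FreeTambaraonC2}. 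With that rephrasing your basis-matching argument goes through as stated.
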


\begin{corollary}\label{cor:RightAdjointSet}
Let $\m{R}$ be a commutative Green functor. Then we have two pullback squares of sets
\begin{center}
\begin{tikzcd}
F_{\cO}\big(\cO'\!,\m{R}\big)(C_2/e)
	\arrow[r]
	\arrow[d]
	&
\m{R}(C_2/C_2)
	\arrow[d, "res_e^{C_2}"]
	\\
\m{R}(C_2/e)
	\arrow[r, "x\mapsto x\cdot \bar{x}"']
	&
\m{R}(C_2/e),
\end{tikzcd}
and
\begin{tikzcd}
F_{\cO}\big(\cO'\!,\m{R}\big)(C_2/C_2)
	\arrow[r]
	\arrow[d]
	&
\m{R}(C_2/C_2)
	\arrow[d, "res_e^{C_2}"]
	\\
\m{R}(C_2/C_2)
	\arrow[r, "x\mapsto (res_e^{C_2}x)^2"']
	&
\m{R}(C_2/e).
\end{tikzcd}
\end{center}
In particular for either subgroup $H\subset C_2$, the elements of $F_{\cO}\big(\cO'\!,\m{R}\big)(C_2/H)$ are certain ordered pairs
\[
(n,x_{C_2/H})\in \m{R}(C_2/C_2)\times\m{R}(C_2/H).
\]
\end{corollary}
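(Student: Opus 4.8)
The plan is to deduce Corollary~\ref{cor:RightAdjointSet} directly from Theorem~\ref{prop:ValuesofRightAdjoint} together with the two pushout descriptions in Propositions~\ref{prop:FixedATamb} and~\ref{prop:UnderlyingATamb}. The key observation is that the hom-functor $\OTamb(-,\m{R})$ out of the category of $\cO$-Tambara functors sends colimits in the first variable to limits of sets; in particular it sends the pushout square of commutative Green functors defining ${\OpRes}\mA^{\cO'}\![x_T]$ to a pullback square of sets. So the entire argument is: apply $\OTamb(-,\m{R})$ to each of the two pushout squares and then identify the three outer corners using Yoneda.

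First I would treat the underlying case, $T = C_2/e$, using Proposition~\ref{prop:UnderlyingATamb}. Applying $\OTamb(-,\m{R})$ to that pushout square turns it into a pullback square whose corners I identify via the Yoneda-type isomorphism $\OTamb(\mA^{\cO}[x_V],\m{R})\cong\m{R}(V)$ of Section~\ref{sec:freeab}: the upper-left corner becomes $F_{\cO}(\cO'\!,\m{R})(C_2/e)$ by Theorem~\ref{prop:ValuesofRightAdjoint}, the corner coming from $\mA^{\cO}[x_{C_2}]$ becomes $\m{R}(C_2/e)$, the corner coming from $\mA^{\cO}[n_\ast]$ becomes $\m{R}(C_2/C_2)$, and the corner $\mA^{\cO}[y_{C_2}]$ becomes $\m{R}(C_2/e)$. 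The two structure maps of the square are then read off by dualizing: the map adjoint to $\mathrm{res}_e^{C_2}n_\ast$ becomes, after applying $\OTamb(-,\m{R})$ and Yoneda, the map $\m{R}(C_2/C_2)\to\m{R}(C_2/e)$ that sends an element $m$ to $\mathrm{res}_e^{C_2}(m)$ — this is precisely the statement that $n_\ast = N_e^{C_2}(x_{C_2})$ restricts to $x_{C_2}\cdot\bar x_{C_2}$, i.e. equation~\eqref{eqn:ResofNorm} — while the map adjoint to $x_{C_2}\cdot\bar x_{C_2}$ becomes $x\mapsto x\cdot\bar x$ on $\m{R}(C_2/e)$. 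This yields exactly the first pullback square. The fixed case $T=\ast$ is identical in structure, using Proposition~\ref{prop:FixedATamb}: the map $R_x$ adjoint to $\big(\mathrm{res}_e^{C_2}x_\ast\big)^2$ dualizes to $x\mapsto\big(\mathrm{res}_e^{C_2}x\big)^2$ on $\m{R}(C_2/C_2)$, and the map $R_n$ adjoint to $\mathrm{res}_e^{C_2}n_\ast$ dualizes to $\mathrm{res}_e^{C_2}$ again. Finally, the ``in particular'' clause is immediate: a point of a pullback of sets $A\times_C B$ is literally an ordered pair $(a,b)\in A\times B$ with matching images, so elements of $F_{\cO}(\cO'\!,\m{R})(C_2/H)$ are ordered pairs $(n,x_{C_2/H})\in\m{R}(C_2/C_2)\times\m{R}(C_2/H)$ as claimed.

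The one point requiring genuine care — and the main obstacle — is justifying that $\OTamb(-,\m{R})$ carries these particular pushouts of Green functors to pullbacks of sets. It does so because it is a representable-style contravariant functor (a hom-functor out of a cocomplete category), so it sends all colimits to limits; but one must make sure the diagrams in Propositions~\ref{prop:FixedATamb} and~\ref{prop:UnderlyingATamb} really are pushouts in $\OTamb = \aC$-Tambara functors (equivalently in the additive-completion presentation used throughout), and that the maps labeled $R$, $N$, $R_x$, $R_n$ are the stated adjoint maps. Both of these are supplied by the cited propositions, so the remaining work is purely the bookkeeping of matching each corner and each arrow of the resulting pullback square against the claimed description — which is routine given the Yoneda identifications and the co-structure maps of Section~\ref{sec:freeab}. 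I would write this up as two short paragraphs, one per square, followed by the one-line deduction of the ordered-pair description.
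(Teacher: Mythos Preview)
Your proposal is correct and is exactly the argument the paper intends: the corollary is stated without proof precisely because it follows immediately from applying the contravariant hom-functor $\OTamb(-,\m{R})$ to the pushout squares of Propositions~\ref{prop:FixedATamb} and~\ref{prop:UnderlyingATamb}, together with the Yoneda identification from Theorem~\ref{prop:ValuesofRightAdjoint}. Your bookkeeping of the corners and structure maps is accurate, and your observation that $\cO$-Tambara functors here are exactly commutative Green functors resolves the only potential subtlety.
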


\begin{remark}
In both pushouts describing the restriction of the free Tambara functor, we have Green functor generators called \(n_{\ast}\). The apparent collision in notation is chosen to reflect that in both cases, this class is the norm of the underlying algebra generator, which is either \(res_{e}^{C_{2}}x_{\ast}\) or \(x_{C_{2}}\) (and its Weyl conjugate). We hope that this overloading of the notation will help reinforce the common role played by both \(n_{\ast}\).
\end{remark}

The comultiplication maps are easily determined in the pullback description: all of the structure maps in the pushouts for Propositions~\ref{prop:UnderlyingATamb} and \ref{prop:FixedATamb} commute with products.

\begin{proposition}\label{prop:CpCoMultiplication}
The co-multiplication map respects the pushout decompositions of Propositions~\ref{prop:UnderlyingATamb} and \ref{prop:FixedATamb}
\end{proposition}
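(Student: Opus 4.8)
The plan is to check, via the universal property of the pushouts, that in each case $\Delta_\times$ is the map of pushout squares induced by the evident co-multiplications on the three corners. First I would record the pushout decomposition of the \emph{target} of $\Delta_\times$: since $\mA^{\cO'}\![x_T,y_T]\cong\mA^{\cO'}\![x_T]\amalg\mA^{\cO'}\![x_T]$ and since ${\OpRes}$ has a right adjoint and therefore preserves colimits, ${\OpRes}\mA^{\cO'}\![x_T,y_T]$ is the coproduct of two copies of the pushout square of Proposition~\ref{prop:FixedATamb} (for $T=\ast$) or Proposition~\ref{prop:UnderlyingATamb} (for $T=C_2$); as a colimit of colimits this is again a pushout square, now of the ``doubled'' free Green functors, with structure maps the coproducts of $R_x,R_n$ (resp.\ $N,R$).

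The heart of the argument is the observation that the restriction of $\Delta_\times$ to each corner of the square is exactly the product co-multiplication of Definition~\ref{def:coRing} for that free Green functor, that is, the map adjoint to the product of its two generators. This is forced by the fact that $\Delta_\times$ is a morphism of incomplete Tambara functors: it commutes with restriction, with the Weyl conjugation $x\mapsto\bar x$, and --- for the $\cO'$-Tambara structure recorded in the two Propositions --- with the norm $N_e^{C_2}$, which is moreover multiplicative. On the polynomial generator $x_\ast$ (resp.\ $x_{C_2}$ and its conjugate $\bar x_{C_2}$) this is immediate. On the corner $\mA^{\cO}[n_{\ast}]$ one uses the identity $n_\ast=N_e^{C_2}(\res_e^{C_2}x_\ast)$ (resp.\ $n_\ast=N_e^{C_2}x_{C_2}$) together with multiplicativity of the norm to see that $\Delta_\times$ sends $n_\ast$ to the product of the two copies of $n_\ast$, so that it is again the product co-multiplication. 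The same computations show that the cube formed by $\Delta_\times$ together with the square's structure maps commutes, since $R_x,R_n$ (resp.\ $N,R$) are adjoint to elements built out of $\res_e^{C_2}$, $N_e^{C_2}$, and products, all of which $\Delta_\times$ preserves.

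With this in hand, the universal property of the pushout ${\OpRes}\mA^{\cO'}\![x_T]$ finishes the argument: the map to ${\OpRes}\mA^{\cO'}\![x_T,y_T]$ induced by the three corner co-multiplications agrees with $\Delta_\times$, because the two maps agree after precomposition with each of the two legs of the pushout --- which is exactly the generator chase just carried out. I expect the only real difficulty to be bookkeeping rather than conceptual: since every corner is a free Green functor and $\Delta_\times$ is a morphism of incomplete Tambara functors, all the relevant identities are forced, the single substantive input being the multiplicativity of $N_e^{C_2}$ used at the $n_\ast$-corner; one must, however, keep careful track of which of the two copies of each generator is which across the cube so as not to transpose factors. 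The analogous statements for $\Delta_+$, $\Delta_r$, $\Delta_t$, and $\Delta_n$ go through identically.
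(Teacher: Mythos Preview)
Your core argument is correct and is an elaboration of the paper's one-line proof: the paper simply observes that the norm commutes with products and that the structure maps $R_x,R_n,N,R$ in the two pushout squares are adjoint to \emph{monomials}, so everything commutes. Your version makes the same observation explicit by tracking the generators through the cube and invoking the universal property; the only substantive input, as you note, is the multiplicativity of $N_e^{C_2}$ at the $n_\ast$-corner.

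Your final sentence, however, is wrong and should be deleted. The analogous statements for $\Delta_+$ and $\Delta_t$ do \emph{not} go through identically, and in fact the conclusion fails for them: the co-addition and co-transfer do not respect the pushout decomposition in the coordinatewise sense. On the $n_\ast$-corner one must compute $N_e^{C_2}$ of a \emph{sum}, and Lemma~\ref{lem:NormofSum} produces the extra cross-terms $tr_e^{C_2}(y\bar z)$ (resp.\ $t\cdot yz$) that appear in Propositions~\ref{prop:UnderlyingCpCoAddition}, \ref{prop:FixedCpCoAddition}, and~\ref{prop:CpCoTransfer}. The entire reason the paper singles out $\Delta_\times$ in this proposition is that it is the \emph{only} co-structure map for which the monomial argument works cleanly; the remaining maps each require their own computation precisely because the norm is not additive.
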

\begin{proof}
The norm map commutes with products, and since all of the structure maps in the pushouts are monomials, they all commute.
\end{proof}

\begin{corollary}\label{cor:MultiplicativeStructure}
The pullback squares of sets in Corollary~\ref{cor:RightAdjointSet} is a pullback in multiplicative monoids: the multiplication maps are done coordinatewise.
\end{corollary}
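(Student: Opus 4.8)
The plan is to deduce this purely formally from Corollary~\ref{cor:RightAdjointSet} and Proposition~\ref{prop:CpCoMultiplication}, exploiting the fact that the forgetful functor from commutative monoids to sets creates limits, so that any limit of monoids is computed on underlying sets with operations taken coordinatewise. Thus, once we know that $F_{\cO}(\cO'\!,\m{R})(C_2/H)$, together with its multiplicative monoid structure, is the pullback \emph{in monoids} of the relevant diagram of monoid homomorphisms, the coordinatewise formula is automatic.

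First I would recall that, by Theorem~\ref{prop:ValuesofRightAdjoint}, the multiplicative monoid structure on $F_{\cO}(\cO'\!,\m{R})(C_2/H)$ is precisely the one dual to the co-multiplication $\Delta_\times$ on $\OpRes\mA^{\cO'}\![x_{C_2/H}]$, and that by Corollary~\ref{cor:RightAdjointSet} this co-ring is the pushout $P\sqcup_B Q$ of free Green functors displayed in Propositions~\ref{prop:FixedATamb} and~\ref{prop:UnderlyingATamb}. The next step is the observation that every structure map in those pushout squares — $R_x$, $R_n$, $R$, $N$ — is adjoint to a \emph{monomial} (respectively $(\res_e^{C_2}x_\ast)^2$, $\res_e^{C_2}n_\ast$, $\res_e^{C_2}n_\ast$, and $x_{C_2}\cdot\bar x_{C_2}$), hence is a co-multiplicative, co-unital map, even though it is not co-additive; dually, the legs of the two pullback squares of Corollary~\ref{cor:RightAdjointSet} are homomorphisms of multiplicative monoids. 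This is exactly why the multiplicative picture is clean while the additive picture carries correction terms. Proposition~\ref{prop:CpCoMultiplication} then says precisely that $\Delta_\times$ is the pushout of the evident co-multiplications on $P$, $B$, and $Q$; applying the contravariant functor $\OTamb(-,\m{R})$ turns this map of pushout squares into a map of pullback squares, and on the three outer corners $\OTamb(P,\m{R})$, $\OTamb(B,\m{R})$, $\OTamb(Q,\m{R})$ — which by Yoneda are $\m{R}(C_2/C_2)$, $\m{R}(C_2/e)$, and $\m{R}(C_2/H)$ with their honest ring multiplications — the induced map is the coordinatewise product $\m{R}(C)\times\m{R}(C)\to\m{R}(C)$. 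Functoriality of pullbacks then forces the induced multiplication on $F_{\cO}(\cO'\!,\m{R})(C_2/H)=\OTamb(P\sqcup_B Q,\m{R})$ to be $(n,x)\cdot(n',x')=(nn',xx')$, which is the assertion.

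I do not expect a genuine obstacle here: the only real work is bookkeeping, namely tracking the Yoneda identifications through Theorem~\ref{prop:ValuesofRightAdjoint} and matching the two pullback coordinates (the first always coming from the norm generator $n_\ast$, the second from $x_\ast$ or $x_{C_2}$) with the factors of $\m{R}(C_2/C_2)\times\m{R}(C_2/H)$. The one conceptual point worth isolating in the writeup is the contrast already flagged above: $\Delta_\times$ respects the pushout decompositions on the nose because restriction, squaring, and multiplication with the Weyl conjugate are all multiplicative, whereas the corresponding statement fails for $\Delta_+$ — which is exactly why the additive structure of $F_{\cO}(\cO'\!,\m{R})$ must be computed separately in the companion corollaries.
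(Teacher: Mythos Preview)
Your proposal is correct and follows the same route as the paper: the paper states Corollary~\ref{cor:MultiplicativeStructure} as an immediate dual consequence of Proposition~\ref{prop:CpCoMultiplication}, whose one-line proof is exactly your observation that the pushout structure maps are adjoint to monomials and hence commute with the co-multiplication. You have simply spelled out the dualization step (applying $\OTamb(-,\m{R})$ and using that limits of monoids are computed on underlying sets) in more detail than the paper does.
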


\begin{proposition}\label{prop:CpCoRestriction}
Recall that the corestriction is the map
\[
\mA^{\cO'}\![x_{C_2}]\to \mA^{\cO'}\![x_{\ast}]
\]
adjoint to $res_{e}^{C_2}(x_{\ast})$. In terms of the pushout decomposition, we have
\[
x_{C_2}\mapsto res_{e}^{C_2}(x_{\ast})\text{ and } n_\ast\mapsto n_{\ast}.
\]
\end{proposition}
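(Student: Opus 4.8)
The plan is to unwind the definition of the corestriction and then transport it through the two pushout presentations. First I would recall that, by Definition~\ref{def:CoTNR} applied to the unique map $f\colon C_2\to\ast$, the corestriction $\Delta_{R_f}\colon\mA^{\cO'}\![x_{C_2}]\to\mA^{\cO'}\![x_{\ast}]$ is the morphism of $\cO'$-Tambara functors adjoint to $R_f(x_\ast)=res_e^{C_2}(x_\ast)\in\mA^{\cO'}\![x_\ast](C_2/e)$; under the Yoneda identification this says exactly that $\Delta_{R_f}$ sends the universal class $x_{C_2}$ to $res_e^{C_2}(x_\ast)$. The substance of the proposition is then to read this off in the pushout presentations of Propositions~\ref{prop:UnderlyingATamb} and~\ref{prop:FixedATamb}.

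Next I would apply the forgetful functor $i^{\cO'}_{\cO}$ and use that, by Proposition~\ref{prop:UnderlyingATamb}, $i^{\cO'}_{\cO}\mA^{\cO'}\![x_{C_2}]$ is the pushout of $\mA^{\cO}[x_{C_2}]\xleftarrow{N}\mA^{\cO}[y_{C_2}]\xrightarrow{R}\mA^{\cO}[n_\ast]$, so that the Green functor map $i^{\cO'}_{\cO}\Delta_{R_f}$ is pinned down by the images of the two Green generators $x_{C_2}$ and $n_\ast$. The first is immediate: the coprojection identifies the free Green generator $x_{C_2}$ with the generator $x_{C_2}$ of the free Tambara functor, and $\Delta_{R_f}$ sends the latter to $res_e^{C_2}(x_\ast)$, which is precisely the named element of the target pushout in Proposition~\ref{prop:FixedATamb}. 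For $n_\ast$ I would chain three facts: Proposition~\ref{prop:UnderlyingATamb} identifies this Green class with $N_e^{C_2}x_{C_2}$ in the ambient Tambara functor; $\Delta_{R_f}$ is a morphism of $\cO'$-Tambara functors, hence commutes with the norm $N_e^{C_2}$ (which belongs to $\cO'$); and Proposition~\ref{prop:FixedATamb} identifies the generator $n_\ast$ of the target with $N_e^{C_2}\big(res_e^{C_2}(x_\ast)\big)$. Stringing these together yields $n_\ast\mapsto n_\ast$.

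The only real care needed — and what I expect to be the main, if mild, obstacle — is bookkeeping: several distinct classes are all written $x_{C_2}$ or $n_\ast$, living respectively in a free Green functor, a free Tambara functor, or its underlying Green functor, and one must be scrupulous about which coprojection matches which with which. A secondary point worth a sentence is that the claimed assignment is automatically compatible with the pushout square, since $i^{\cO'}_{\cO}\Delta_{R_f}$ already exists; concretely this reflects the Weyl-invariance of $res_e^{C_2}(x_\ast)$, which sends the class $x_{C_2}\bar{x}_{C_2}$ defining the leg $N$ to $\big(res_e^{C_2}(x_\ast)\big)^2$ defining the leg $R_x$. No genuinely difficult computation arises; once the definitions are aligned, both assignments are forced.
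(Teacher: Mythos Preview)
Your proposal is correct and follows exactly the approach the paper takes: the paper's proof is the single sentence ``This is immediate from the description of the elements $x_?$ and $n_\ast$ in Propositions~\ref{prop:UnderlyingATamb} and~\ref{prop:FixedATamb},'' and your write-up simply unpacks what ``immediate'' means---namely that $n_\ast$ is the norm of the generator on each side and the corestriction, being a map of $\cO'$-Tambara functors, commutes with that norm. Your additional remarks on bookkeeping and on compatibility with the pushout legs are accurate but go beyond what the paper records.
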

\begin{proof}
This is immediate from the description of the elements $x_?$ and $n_\ast$ in Propositions~\ref{prop:UnderlyingATamb} and \ref{prop:FixedATamb}.
\end{proof}

\begin{corollary}\label{cor:RightAdjointRestriction}
In the notation of Corollary~\ref{cor:RightAdjointSet}, the restriction map is given by
\[
res_e^{C_2}\big((n,x_\ast)\big)=(n,res_e^{C_2} x_\ast).
\]
\end{corollary}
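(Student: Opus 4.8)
The plan is to dualize Proposition~\ref{prop:CpCoRestriction}. By Theorem~\ref{prop:ValuesofRightAdjoint}, the restriction map on $F_\cO(\cO'\!,\m{R})$ is obtained by applying $\OTamb(-,\m{R})$ to $\OpRes$ of the co-restriction map of the co-$\cO$-Tambara functor $\mA^{\cO'}\![x_{(-)}]$; since restriction is already part of the underlying Green functor structure, $\OpRes$ leaves it untouched, so we are dualizing the co-restriction $\Delta_{r}\colon\mA^{\cO'}\![x_{C_2}]\to\mA^{\cO'}\![x_\ast]$ of Proposition~\ref{prop:CpCoRestriction}. First I would unwind the identifications supplied by Theorem~\ref{prop:ValuesofRightAdjoint} and Corollary~\ref{cor:RightAdjointSet}: an element $(n,x)$ of $F_\cO(\cO'\!,\m{R})(C_2/C_2)\cong\OTamb\big(\OpRes\mA^{\cO'}\![x_\ast],\m{R}\big)$ is, via the pushout presentation of Proposition~\ref{prop:FixedATamb}, exactly a map of Green functors $\phi$ with $n=\phi(n_\ast)$ and $x=\phi(x_\ast)$; similarly an element of $F_\cO(\cO'\!,\m{R})(C_2/e)\cong\OTamb\big(\OpRes\mA^{\cO'}\![x_{C_2}],\m{R}\big)$ is a map $\psi$ determined by the values $\psi(n_\ast)$ and $\psi(x_{C_2})$ under the pushout of Proposition~\ref{prop:UnderlyingATamb}.

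With these dictionaries in hand, the restriction of $(n,x)$ is by definition the composite $\phi\circ\Delta_{r}$, and I would read off its two defining values using Proposition~\ref{prop:CpCoRestriction}, which records that $\Delta_{r}$ sends $n_\ast\mapsto n_\ast$ and $x_{C_2}\mapsto res_e^{C_2}(x_\ast)$ in the pushout decomposition. Thus $(\phi\circ\Delta_{r})(n_\ast)=\phi(n_\ast)=n$, while $(\phi\circ\Delta_{r})(x_{C_2})=\phi\big(res_e^{C_2}(x_\ast)\big)=res_e^{C_2}\big(\phi(x_\ast)\big)=res_e^{C_2}(x)$, the middle equality holding because $\phi$ is in particular a map of Mackey functors and so commutes with the restriction. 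Hence $res_e^{C_2}\big((n,x)\big)=\big(n,res_e^{C_2}x\big)$, which is the assertion.

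I do not expect a genuine obstacle: the entire content is Proposition~\ref{prop:CpCoRestriction} together with the formal adjunction identifications, so the only care required is bookkeeping — keeping straight which coordinate of an ordered pair is named by the generator $n_\ast$ and which by $x_\ast$ or $x_{C_2}$ in each of the two pushout presentations, and using (as in the remark following Corollary~\ref{cor:RightAdjointSet}) that the class called $n_\ast$ in Proposition~\ref{prop:FixedATamb} is matched with the class called $n_\ast$ in Proposition~\ref{prop:UnderlyingATamb} by $\Delta_{r}$. As a consistency check one confirms that $\big(n,res_e^{C_2}x\big)$ really does lie in the pullback defining $F_\cO(\cO'\!,\m{R})(C_2/e)$: indeed $res_e^{C_2}(n)=(res_e^{C_2}x)^2=res_e^{C_2}(x)\cdot\overline{res_e^{C_2}(x)}$, since the Weyl action is trivial on elements restricted from $C_2/C_2$.
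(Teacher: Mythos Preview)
Your proposal is correct and is exactly the argument the paper intends: the corollary is stated without proof immediately after Proposition~\ref{prop:CpCoRestriction}, and your write-up simply makes explicit the dualization via Theorem~\ref{prop:ValuesofRightAdjoint} and the pushout identifications of Propositions~\ref{prop:FixedATamb} and~\ref{prop:UnderlyingATamb}. The consistency check at the end is a nice addition but not strictly needed.
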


\begin{proposition}\label{prop:CpCoNorm}
The co-norm map is the map
\[
\mA^{\cO'}[x_{\ast}]\to \mA^{\cO'}[x_{C_{2}}]
\]
adjoint to $N_e^{C_2} x_{C_2}$. Thus in terms of the pushout generators \(x_{\ast}\) and \(n_{\ast}\) of \(i_{\cO}^{\cO'}\mA^{\cO}[x_{\ast}]\) and \(x_{C_{2}}\) and \(n_{\ast}\) of \(i_{\cO}^{\cO'}\mA^{\cO}[x_{C_{2}}]\), we have
\begin{align*}
x_\ast&\mapsto n_\ast \\
n_\ast&\mapsto n_\ast^2.
\end{align*}
\end{proposition}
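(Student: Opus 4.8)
The plan is to read the co-norm off its Yoneda description and then propagate the two generator values through the ambient $\cO'$-Tambara structure. By Definition~\ref{def:CoTNR}, taking $f\colon C_2\to\ast$, the co-norm $\Delta_n=\Delta_{N_f}\colon\mA^{\cO'}\![x_\ast]\to\mA^{\cO'}\![x_{C_2}]$ is the map of $\cO'$-Tambara functors adjoint to the polynomial $N_f(x_{C_2})=[C_2\xleftarrow{1}C_2\xrightarrow{f}\ast\xrightarrow{1}\ast]\in\mA^{\cO'}\![x_{C_2}](\ast)$, and this polynomial is exactly the class $N_e^{C_2}(x_{C_2})$; this gives the first sentence of the proposition. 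Under the Yoneda isomorphism $\cO'\mhyphen\Tamb(\mA^{\cO'}\![x_\ast],-)\cong(-)(\ast)$ the generator $x_\ast$ is carried to this adjoint element, so $\Delta_n(x_\ast)=N_e^{C_2}(x_{C_2})$; by Lemma~\ref{lem:FreeTambaraonC2} this is the norm class $n$, which Proposition~\ref{prop:UnderlyingATamb} identifies with the pushout generator $n_\ast$ of $\OpRes\mA^{\cO'}\![x_{C_2}]$. Hence $x_\ast\mapsto n_\ast$.

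For the image of the other generator, I would use that $\Delta_n$, being a morphism of $\cO'$-Tambara functors, commutes with $N_e^{C_2}$ and $res_e^{C_2}$. By Proposition~\ref{prop:FixedATamb} the source generator $n_\ast\in\mA^{\cO'}\![x_\ast]$ equals $N_e^{C_2}\big(res_e^{C_2}x_\ast\big)$, so
\[
\Delta_n(n_\ast)=\Delta_n\big(N_e^{C_2}(res_e^{C_2}x_\ast)\big)=N_e^{C_2}\big(res_e^{C_2}\Delta_n(x_\ast)\big)=N_e^{C_2}\big(res_e^{C_2}n_\ast\big),
\]
where $n_\ast$ on the right-hand side denotes the target generator. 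By Proposition~\ref{prop:UnderlyingATamb} (equation~\eqref{eqn:ResofNorm}) we have $res_e^{C_2}n_\ast=x_{C_2}\cdot\bar{x}_{C_2}$, and then the multiplicativity of the norm together with the identity $N_e^{C_2}(x_{C_2})=N_e^{C_2}(\bar{x}_{C_2})=n_\ast$ from Lemma~\ref{lem:FreeTambaraonC2} give $\Delta_n(n_\ast)=N_e^{C_2}(x_{C_2})\cdot N_e^{C_2}(\bar{x}_{C_2})=n_\ast^2$.

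The one point needing care is exactly the step just used: after applying $\OpRes$ the map $\Delta_n$ is merely a morphism of Green functors, and the norm along $C_2\to\ast$ — together with the fixed-level relations it generates — does not exist at the Green-functor level, so one has to remember that $\Delta_n$ is the restriction of an honest $\cO'$-Tambara functor map in order to push the defining identities of Propositions~\ref{prop:FixedATamb} and~\ref{prop:UnderlyingATamb} through it. Once that is granted, the rest is a formal consequence of the Yoneda lemma and the basic properties of $N_e^{C_2}$ recorded in Section~\ref{sec:freeC2}, and I expect no further obstacle.
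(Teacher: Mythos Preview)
Your proposal is correct and follows essentially the same approach as the paper: the first formula is the Yoneda/definitional statement, and for the second you use that $\Delta_n$ is a map of $\cO'$-Tambara functors to rewrite $\Delta_n(n_\ast)=N_e^{C_2}\big(res_e^{C_2}n_\ast\big)=N_e^{C_2}(x_{C_2}\bar{x}_{C_2})$ and then invoke multiplicativity and Weyl invariance of the norm. Your explicit remark that one must remember the ambient $\cO'$-structure (rather than just the Green-functor restriction) to justify this step is a point the paper leaves implicit.
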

\begin{proof}
Only the second formula needs any justification; the first is essentially the definition. Since the co-norm is a map of Tambara functors, we know that we have
\[
n_\ast\mapsto N_e^{C_2} res_e^{C_2} (n_\ast)=N_e^{C_2}(x_{C_2}\bar{x}_{C_2}),
\]
where the last equality is again the multiplicative double coset formula for the restriction of a norm.
The result follows from multiplicativity and Weyl invariance of the norm.
\end{proof}

\begin{corollary}\label{cor:RightAdjointNorm}
In the notation of Corollary~\ref{cor:RightAdjointSet}, the norm map is given by
\[
N_e^{C_2}\big((n,x_{C_2})\big)=(n^2,n).
\]
\end{corollary}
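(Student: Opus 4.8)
The plan is to combine Theorem~\ref{prop:ValuesofRightAdjoint} with the explicit computation of the co-norm in Proposition~\ref{prop:CpCoNorm}; no genuinely new input is needed. By Theorem~\ref{prop:ValuesofRightAdjoint}, under the identifications
\[
F_{\cO}(\cO'\!,\m{R})(C_2/e)\cong \OTamb\big(\OpRes\mA^{\cO'}\![x_{C_2}],\m{R}\big)\qquad\text{and}\qquad F_{\cO}(\cO'\!,\m{R})(C_2/C_2)\cong \OTamb\big(\OpRes\mA^{\cO'}\![x_{\ast}],\m{R}\big),
\]
the norm map $N_e^{C_2}$ is precomposition with $\OpRes\Delta_{N_n}$, the restriction to Green functors of the co-norm $\Delta_{N_n}\colon\mA^{\cO'}\![x_{\ast}]\to\mA^{\cO'}\![x_{C_2}]$ associated to $C_2\to\ast$.

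First I would unwind the ordered-pair notation of Corollary~\ref{cor:RightAdjointSet} via the pushout descriptions of Propositions~\ref{prop:UnderlyingATamb} and~\ref{prop:FixedATamb}. A point of $F_{\cO}(\cO'\!,\m{R})(C_2/e)$ is a map $\varphi\colon\OpRes\mA^{\cO'}\![x_{C_2}]\to\m{R}$, determined by its values on the two pushout generators; in the ordered-pair notation its first coordinate is $\varphi(n_{\ast})\in\m{R}(C_2/C_2)$ and its second is $\varphi(x_{C_2})\in\m{R}(C_2/e)$. Likewise a point of $F_{\cO}(\cO'\!,\m{R})(C_2/C_2)$ is a map $\psi\colon\OpRes\mA^{\cO'}\![x_{\ast}]\to\m{R}$ with first coordinate $\psi(n_{\ast})$ and second coordinate $\psi(x_{\ast})$.

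Next I would simply compose. Given $(n,x_{C_2})$, i.e.\ a map $\varphi$ with $\varphi(n_{\ast})=n$ and $\varphi(x_{C_2})=x_{C_2}$, its norm is the map $\psi=\varphi\circ\OpRes\Delta_{N_n}$. By Proposition~\ref{prop:CpCoNorm}, $\OpRes\Delta_{N_n}$ carries the pushout generators by $x_{\ast}\mapsto n_{\ast}$ and $n_{\ast}\mapsto n_{\ast}^{2}$, so
\[
\psi(x_{\ast})=\varphi\big(\OpRes\Delta_{N_n}(x_{\ast})\big)=\varphi(n_{\ast})=n\qquad\text{and}\qquad\psi(n_{\ast})=\varphi(n_{\ast}^{2})=\varphi(n_{\ast})^{2}=n^{2},
\]
the last equality because $\varphi$ is a ring homomorphism at level $C_2/C_2$. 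Reading off these coordinates in the order prescribed by Corollary~\ref{cor:RightAdjointSet} yields $N_e^{C_2}\big((n,x_{C_2})\big)=(n^{2},n)$.

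There is no analytic difficulty: once Proposition~\ref{prop:CpCoNorm} is in hand, this is a one-line diagram chase along the adjunction of Theorem~\ref{prop:ValuesofRightAdjoint}. The main obstacle --- such as it is --- is purely bookkeeping: one must keep straight which pushout generator ($n_{\ast}$, $x_{\ast}$, or $x_{C_2}$) corresponds to which coordinate on which side of the isomorphisms of Corollary~\ref{cor:RightAdjointSet}, and verify that Proposition~\ref{prop:CpCoNorm} is indeed computing $\OpRes\Delta_{N_n}$ on exactly those generators. The phenomenon worth flagging, and the content of the remark after the omnibus theorem, is that the computation discards the second coordinate $x_{C_2}$ entirely --- the one piece genuinely recording an element of $\m{R}(C_2/e)$ plays no role in the norm.
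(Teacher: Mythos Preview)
Your proposal is correct and follows exactly the approach intended by the paper: the corollary is left without proof there precisely because it is the immediate dualization of Proposition~\ref{prop:CpCoNorm} via Theorem~\ref{prop:ValuesofRightAdjoint}, which is what you have written out in full. Your careful unwinding of which pushout generator corresponds to which coordinate is the only content, and you have it right.
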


It is much more interesting to determine the co-Mackey functor structure here. Here we have to use the formulae for the norm of a sum, since that shows up in the pushout decomposition. We begin with the coaddition. Here, the coaddition is determined by the maps of Tambara functors
\[
x_{C_2}\mapsto y_{C_2}+z_{C_2}\text{ and }x_{\ast}\mapsto y_{\ast}+z_{\ast}.
\]
For our analysis, we need the underlying map on Green functors given by the pushout squares in Propositions~\ref{prop:UnderlyingATamb} and \ref{prop:FixedATamb}. On the element with same name, the map is obvious. In both of the pushout squares, the elements $n_\ast$ are really norms of the corresponding elements $x_?$, so the image of these is determined by the universal formulae.

\begin{proposition}\label{prop:CpCoTransfer}
The co-transfer map is adjoint to $tr_{e}^{C_2} x_{C_2}$. In terms of the pushout generators given, this gives
\[
x_\ast\mapsto tr_{e}^{C_2} x_{C_2}\text{ and }n_\ast\mapsto 2n_\ast+tr_e^{C_2} (x_{C_2}^2).
\]
\end{proposition}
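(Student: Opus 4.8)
The plan is to follow the house style of Propositions~\ref{prop:CpCoRestriction} and~\ref{prop:CpCoNorm}: first identify the element the co-transfer is adjoint to, and then compute where it carries the two Green-functor generators $x_\ast$ and $n_\ast$ of $i^{\cO'}_{\cO}\mA^{\cO'}[x_\ast]$ that appear in the pushout of Proposition~\ref{prop:FixedATamb}. By Definition~\ref{def:CoTNR} applied to the unique map $f\colon C_2\to\ast$, the co-transfer is the map of $\cO'$-Tambara functors $\mA^{\cO'}[x_\ast]\to\mA^{\cO'}[x_{C_2}]$ adjoint to $T_f(x_{C_2})=tr_e^{C_2}(x_{C_2})$. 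Under the Yoneda isomorphism of Proposition~\ref{prop:CoTambaraonFrees}, a map of Tambara functors out of $\mA^{\cO'}[x_\ast]$ sends the universal class $x_\ast$ to exactly the element it is adjoint to, so $x_\ast\mapsto tr_e^{C_2}(x_{C_2})$ on the nose; this is the first formula, and since the right-hand side is already written in terms of the pushout generator $x_{C_2}$ and the transfer, nothing further is needed there. As Proposition~\ref{prop:FixedATamb} exhibits $i^{\cO'}_{\cO}\mA^{\cO'}[x_\ast]$ as a pushout of Green functors generated by $x_\ast$ and $n_\ast$, it remains only to compute the image of $n_\ast$.

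For that step I would use that the co-transfer is a map of $\cO'$-Tambara functors, hence commutes with $N_e^{C_2}$ and $res_e^{C_2}$. Proposition~\ref{prop:FixedATamb} records that in the ambient Tambara structure $n_\ast=N_e^{C_2}\big(res_e^{C_2}x_\ast\big)$, so its image is $N_e^{C_2}\big(res_e^{C_2}(tr_e^{C_2}x_{C_2})\big)$. The double coset formula for a transfer gives $res_e^{C_2}\big(tr_e^{C_2}(a)\big)=a+\bar a$, so the inner term equals $x_{C_2}+\bar x_{C_2}$, and I then expand $N_e^{C_2}(x_{C_2}+\bar x_{C_2})$ with the norm-of-a-sum formula (Lemma~\ref{lem:NormofSum}). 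The cross term is $tr_e^{C_2}\big(x_{C_2}\cdot\overline{\bar x_{C_2}}\big)=tr_e^{C_2}(x_{C_2}^2)$, while $N_e^{C_2}(x_{C_2})=N_e^{C_2}(\bar x_{C_2})=n_\ast$ by Proposition~\ref{prop:UnderlyingATamb} together with the Weyl invariance of the norm recorded in Lemma~\ref{lem:FreeTambaraonC2}. Summing these contributions gives $n_\ast\mapsto 2n_\ast+tr_e^{C_2}(x_{C_2}^2)$, the second formula.

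The only place where care is genuinely required is the bookkeeping at the interface between the Green-functor presentation and the Tambara structure: $n_\ast$ is a generator of the \emph{Green} functor $i^{\cO'}_{\cO}\mA^{\cO'}[x_\ast]$, but to track it one must reinterpret it as the norm $N_e^{C_2}(res_e^{C_2}x_\ast)$ inside the ambient $\cO'$-Tambara functor, commute the co-transfer past $N_e^{C_2}$ and $res_e^{C_2}$ \emph{there}, and only afterwards re-express the answer using the Green-functor pushout presentation of the target from Proposition~\ref{prop:UnderlyingATamb}. Beyond that the argument is a routine application of the double coset formula and Lemma~\ref{lem:NormofSum}, so I anticipate no real obstacle. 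It is worth remarking that this is precisely the computation that is cleaner for $p=2$: for a general $C_p$ one has $res_e^{C_p}\big(tr_e^{C_p}(a)\big)=\sum_{i=0}^{p-1}\gamma^i a$, and iterating the norm-of-a-sum formula over $p$ summands produces a considerably longer expression.
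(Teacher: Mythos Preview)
Your proposal is correct and follows essentially the same route as the paper: the first formula is declared immediate, and for $n_\ast$ you use that the co-transfer is a Tambara map to rewrite its image as $N_e^{C_2}\big(res_e^{C_2}tr_e^{C_2}x_{C_2}\big)=N_e^{C_2}(x_{C_2}+\bar x_{C_2})$ via the double coset formula, then expand with Lemma~\ref{lem:NormofSum} and the Weyl invariance $N_e^{C_2}(x_{C_2})=N_e^{C_2}(\bar x_{C_2})=n_\ast$. Your additional remarks on the Green-versus-Tambara bookkeeping and the $C_p$ comparison are accurate elaborations but go beyond what the paper records.
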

\begin{proof}
Only the second part of the formula requires any justification. Since the co-transfer is a map of Tambara functors, and since the class $n_\ast$ is the norm of the restriction of $x_\ast$, we conclude 
\[
n_\ast\mapsto N_e^{C_2} res_e^{C_2} tr_e^{C_2} x_{C_2}=N_e^{C_2}\big(x_{C_2}+\bar{x}_{C_2}\big).
\]
The formula then follows from the observation that both $x_{C_2}$ and $\bar{x}_{C_2}$ have the same norm: $n_\ast$ together with Lemma~\ref{lem:NormofSum}.
\end{proof}

\begin{corollary}\label{cor:RightAdjointTransfer}
In the notation of Corollary~\ref{cor:RightAdjointSet}, the transfer is given by
\[
tr_e^{C_2}\big((n,x_{C_2})\big)=\big(2n+tr_e^{C_2}(x_{C_2}^2),tr_e^{C_2} x_{C_2}\big).
\]
\end{corollary}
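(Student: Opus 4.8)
The plan is to obtain this as a short consequence of Theorem~\ref{prop:ValuesofRightAdjoint} together with the explicit co-transfer of Proposition~\ref{prop:CpCoTransfer}, read off through the pullback-of-sets dictionary of Corollary~\ref{cor:RightAdjointSet}. First I would spell out the two sets involved. By Theorem~\ref{prop:ValuesofRightAdjoint} a point of $F_{\cO}\big(\cO'\!,\m{R}\big)(C_2/e)$ is a map of Green functors $\phi\colon{\OpRes}\mA^{\cO'}\![x_{C_2}]\to\m{R}$, and by the pushout description of Proposition~\ref{prop:UnderlyingATamb} such a $\phi$ is determined by the pair $(n,x_{C_2}):=\big(\phi(n_\ast),\phi(x_{C_2})\big)\in\m{R}(C_2/C_2)\times\m{R}(C_2/e)$ subject to $res_e^{C_2}(n)=x_{C_2}\cdot\bar x_{C_2}$; this is precisely the parametrization in Corollary~\ref{cor:RightAdjointSet}. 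Likewise a point of $F_{\cO}\big(\cO'\!,\m{R}\big)(C_2/C_2)$ is a map $\psi\colon{\OpRes}\mA^{\cO'}\![x_\ast]\to\m{R}$, recorded by the images of the two pushout generators $n_\ast$ and $x_\ast$ of Proposition~\ref{prop:FixedATamb}, with the first entry of the resulting pair being $\psi(n_\ast)$.

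By the second clause of Theorem~\ref{prop:ValuesofRightAdjoint} the transfer $tr_e^{C_2}$ on $F_{\cO}\big(\cO'\!,\m{R}\big)$ is dual to ${\OpRes}$ of the co-transfer $\Delta_t\colon\mA^{\cO'}\![x_\ast]\to\mA^{\cO'}\![x_{C_2}]$, so $tr_e^{C_2}(\phi)=\phi\circ{\OpRes}(\Delta_t)$. To read off the pair attached to this composite I would substitute the formulas of Proposition~\ref{prop:CpCoTransfer}, namely $\Delta_t(x_\ast)=tr_e^{C_2}x_{C_2}$ and $\Delta_t(n_\ast)=2n_\ast+tr_e^{C_2}(x_{C_2}^2)$ as elements of ${\OpRes}\mA^{\cO'}\![x_{C_2}]$, and then apply $\phi$. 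Since $\phi$ is a map of Green functors it is a ring homomorphism at every level and commutes with the transfer, so the image of $x_\ast$ is $\phi\big(tr_e^{C_2}x_{C_2}\big)=tr_e^{C_2}\big(\phi(x_{C_2})\big)=tr_e^{C_2}(x_{C_2})$, while the image of $n_\ast$ is $2\phi(n_\ast)+tr_e^{C_2}\big(\phi(x_{C_2})^2\big)=2n+tr_e^{C_2}(x_{C_2}^2)$. Reading these back through Corollary~\ref{cor:RightAdjointSet} gives $tr_e^{C_2}\big((n,x_{C_2})\big)=\big(2n+tr_e^{C_2}(x_{C_2}^2),\,tr_e^{C_2}x_{C_2}\big)$, as claimed.

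I do not expect a genuine obstacle here: the substantive content — the coefficient $2$ and the term $tr_e^{C_2}(x_{C_2}^2)$, which arise from expanding $N_e^{C_2}\big(res_e^{C_2}tr_e^{C_2}x_{C_2}\big)=N_e^{C_2}(x_{C_2}+\bar x_{C_2})$ via the additive double coset formula and the norm-of-a-sum identity of Lemma~\ref{lem:NormofSum} — has already been absorbed into Proposition~\ref{prop:CpCoTransfer}. The only points that need care are bookkeeping: keeping straight which pushout generator indexes which coordinate under the convention that the first entry of a pair comes from the top-right factor of the defining square and the second from the bottom-left, and noticing that it is precisely the Green-functor (rather than merely ring) structure of $\phi$ that licenses pulling the transfer outside of $\phi$.
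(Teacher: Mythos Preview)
Your proposal is correct and is exactly the argument the paper intends: the corollary is stated immediately after Proposition~\ref{prop:CpCoTransfer} with no separate proof, so the paper is relying on precisely the dualization step you wrote out, reading the co-transfer formulas through the dictionary of Theorem~\ref{prop:ValuesofRightAdjoint} and Corollary~\ref{cor:RightAdjointSet}. Your bookkeeping on which generator lands in which coordinate and your use of the Green-functor structure of $\phi$ to commute it past $tr_e^{C_2}$ are both on point.
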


There are two co-additions: the underlying one (for $C_2/e$) and the fixed one (for $C_2/C_2$). These also follow immediately from Lemma~\ref{lem:NormofSum}.
\begin{proposition}\label{prop:UnderlyingCpCoAddition}
Consider the coaddition map
\[
\mA^{\cO}[x_{C_2}]\to\mA^{\cO}[y_{C_2},z_{C_2}].
\]
For the target, let $n_\ast^y$ and $n_\ast^z$ denote the corresponding norm classes in the pushout description. Then in terms of these pushout generators, the underlying coaddition is given by:
\begin{align*}
x_{C_2}&\mapsto y_{C_2}+z_{C_2} \\
n_\ast&\mapsto n_\ast^{y}+n_\ast^{z}+tr_e^{C_2} (y_{C_2}\bar{z}_{C_2}).
\end{align*}

\end{proposition}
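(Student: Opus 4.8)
The plan is to compute the image of the two pushout generators $x_{C_2}$ and $n_\ast$ under the coaddition map $\mA^{\cO}[x_{C_2}]\to\mA^{\cO}[y_{C_2},z_{C_2}]$, using the fact (Proposition~\ref{prop:UnderlyingATamb}) that $n_\ast$ is \emph{not} a new free generator but is literally the norm $N_e^{C_2}x_{C_2}$ in the underlying Tambara functor structure. Concretely, the coaddition is by definition adjoint to the element $y_{C_2}+z_{C_2}\in i^{\cO'}_{\cO}\mA^{\cO'}[y_{C_2},z_{C_2}](C_2/e)$; equivalently, as a map of Tambara functors $\mA^{\cO'}[x_{C_2}]\to\mA^{\cO'}[y_{C_2},z_{C_2}]$ it sends $x_{C_2}\mapsto y_{C_2}+z_{C_2}$, and after forgetting to Green functors this determines everything.

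First I would record that $x_{C_2}\mapsto y_{C_2}+z_{C_2}$ is immediate from the definition of the coaddition (this is the generator with "the same name"). Second, I would use that the map is a map of Tambara functors on the free Tambara functors \emph{before} forgetting, together with the identification $n_\ast = N_e^{C_2}x_{C_2}$ of Proposition~\ref{prop:UnderlyingATamb}, to compute
\[
n_\ast = N_e^{C_2}(x_{C_2}) \longmapsto N_e^{C_2}(y_{C_2}+z_{C_2}).
\]
Third, I would apply Lemma~\ref{lem:NormofSum} (the norm-of-a-sum formula) with $a=y_{C_2}$ and $b=z_{C_2}$ to expand
\[
N_e^{C_2}(y_{C_2}+z_{C_2}) = N_e^{C_2}(y_{C_2}) + N_e^{C_2}(z_{C_2}) + tr_e^{C_2}(y_{C_2}\bar{z}_{C_2}),
\]
and finally identify $N_e^{C_2}(y_{C_2})=n_\ast^y$ and $N_e^{C_2}(z_{C_2})=n_\ast^z$ via the corresponding instances of Proposition~\ref{prop:UnderlyingATamb} applied to the target $\mA^{\cO}[y_{C_2},z_{C_2}]$. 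This yields exactly the claimed formula $n_\ast\mapsto n_\ast^{y}+n_\ast^{z}+tr_e^{C_2}(y_{C_2}\bar{z}_{C_2})$.

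The only subtlety — and the step I would be most careful about — is the bookkeeping between the $\cO'$-Tambara level and the $\cO$-Green level: the norm $N_e^{C_2}$ does not exist in $\cO$-Tambara functors, so the formula is genuinely a statement about the Tambara functors $\mA^{\cO'}[\,\cdot\,]$ that one then pushes down along ${\OpRes}$, using that the elements $x_{?}$, $n_\ast$, and $tr_e^{C_2}(y_{C_2}\bar{z}_{C_2})$ all survive (indeed generate) the forgotten Green functors. There is nothing truly hard here; the real content is Lemma~\ref{lem:NormofSum}, which has already been established, so this proposition is essentially its translation into the pushout coordinates. The fixed coaddition stated implicitly alongside it will follow from the same argument together with Proposition~\ref{prop:FixedATamb} and Proposition~\ref{prop:normofsum} in place of Lemma~\ref{lem:NormofSum}.
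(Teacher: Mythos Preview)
Your proposal is correct and follows exactly the approach the paper takes: the paper simply remarks that on the generator $x_{C_2}$ the map is obvious, that $n_\ast$ is the norm of $x_{C_2}$ so its image is determined by the universal norm-of-a-sum formula, and then declares the result to follow immediately from Lemma~\ref{lem:NormofSum}. Your careful bookkeeping between the $\cO'$-Tambara and $\cO$-Green levels is a welcome elaboration of what the paper leaves implicit.
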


\begin{corollary}\label{cor:UnderlyingAddition}
In the notation of Corollary~\ref{cor:RightAdjointSet}, the underlying addition is given by
\[
(n,x_{C_2})+(n',x_{C_2}')=\big(n+n'+tr_e^{C_2}(x_{C_2}\bar{x}_{C_2}'),x_{C_2}+x_{C_2}'\big).
\]
\end{corollary}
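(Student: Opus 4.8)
The plan is to obtain the formula by dualizing Proposition~\ref{prop:UnderlyingCpCoAddition} through Theorem~\ref{prop:ValuesofRightAdjoint}. By that theorem, $F_{\cO}(\cO'\!,\m{R})(C_2/e)\cong\OTamb\big({\OpRes}\mA^{\cO'}\![x_{C_2}],\m{R}\big)$, and, as recorded in Theorem~\ref{prop:ValuesofRightAdjoint} (and Proposition~\ref{prop:CoTambaraonFrees}), the structure maps of $F_{\cO}(\cO'\!,\m{R})$ — in particular the ring addition at level $C_2/e$ — are dual to ${\OpRes}$ of the corresponding co-structure maps on $\mA^{\cO'}\![x_{(-)}]$, here the underlying co-addition
\[
\Delta_+\colon\mA^{\cO'}\![x_{C_2}]\to\mA^{\cO'}\![y_{C_2},z_{C_2}]
\]
of Definition~\ref{def:coRing}, where $\mA^{\cO'}\![y_{C_2},z_{C_2}]\cong\mA^{\cO'}\![y_{C_2}]\sqcup\mA^{\cO'}\![z_{C_2}]$ is the coproduct. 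Concretely, let $\phi$ and $\psi$ be the maps corresponding under Corollary~\ref{cor:RightAdjointSet} to the pairs $(n,x_{C_2})$ and $(n',x_{C_2}')$; in the pushout description of Proposition~\ref{prop:UnderlyingATamb} this means $\phi$ carries the pushout generators $n_\ast$ and $x_{C_2}$ to $n$ and $x_{C_2}$, and similarly for $\psi$. Then $(n,x_{C_2})+(n',x_{C_2}')$ is recorded by the pair of images of $n_\ast$ and $x_{C_2}$ under the composite
\[
{\OpRes}\mA^{\cO'}\![x_{C_2}]\xrightarrow{{\OpRes}\Delta_+}{\OpRes}\mA^{\cO'}\![y_{C_2},z_{C_2}]\xrightarrow{\langle\phi,\psi\rangle}\m{R},
\]
where $\langle\phi,\psi\rangle$ is induced by $\phi$ and $\psi$ via the coproduct decomposition, so that it sends $y_{C_2}\mapsto x_{C_2}$, $z_{C_2}\mapsto x_{C_2}'$, $n_\ast^y\mapsto n$, and $n_\ast^z\mapsto n'$.

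Next I would substitute the formulae of Proposition~\ref{prop:UnderlyingCpCoAddition}, which computes ${\OpRes}\Delta_+$ on the pushout generators as $x_{C_2}\mapsto y_{C_2}+z_{C_2}$ and $n_\ast\mapsto n_\ast^y+n_\ast^z+\tr_e^{C_2}(y_{C_2}\bar{z}_{C_2})$. Applying $\langle\phi,\psi\rangle$ — which is a map of Green functors, hence commutes with $\tr_e^{C_2}$ and, at level $C_2/e$, with the Weyl action, so that $\bar{z}_{C_2}\mapsto\bar{x}_{C_2}'$ — gives
\[
x_{C_2}\mapsto x_{C_2}+x_{C_2}',\qquad n_\ast\mapsto n+n'+\tr_e^{C_2}\big(x_{C_2}\bar{x}_{C_2}'\big),
\]
which is exactly the asserted formula for $(n,x_{C_2})+(n',x_{C_2}')$.

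The only care the argument demands is bookkeeping: tracking that in the pullback coordinates of Corollary~\ref{cor:RightAdjointSet} the first coordinate is the image of the norm generator $n_\ast$ and the second is the image of the underlying generator $x_{C_2}$, and that the coproduct decomposition of the free Tambara functor on two free generators is compatible with the two pushout decompositions (Proposition~\ref{prop:CpCoMultiplication}), so that the classes $n_\ast^y$ and $n_\ast^z$ in the target are genuinely the norms of $y_{C_2}$ and $z_{C_2}$ and land where claimed under $\langle\phi,\psi\rangle$. There is no remaining obstacle: the genuine algebraic content — in particular the transfer correction term $\tr_e^{C_2}(x_{C_2}\bar{x}_{C_2}')$ — has already been absorbed into Proposition~\ref{prop:UnderlyingCpCoAddition} via the norm-of-a-sum formula (Lemma~\ref{lem:NormofSum}), so no further computation is needed.
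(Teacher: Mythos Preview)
Your argument is correct and is exactly the unpacking of the duality in Theorem~\ref{prop:ValuesofRightAdjoint} that the paper leaves implicit: the corollary is stated without proof immediately after Proposition~\ref{prop:UnderlyingCpCoAddition}, and your write-up simply spells out how that proposition, read through the $\OTamb(-,\m{R})$ adjunction, yields the formula. One small bookkeeping quibble: the citation of Proposition~\ref{prop:CpCoMultiplication} for the compatibility of the two pushout descriptions in the coproduct $\mA^{\cO'}\![y_{C_2},z_{C_2}]$ is slightly off---that proposition concerns co-multiplication specifically---but the compatibility you need is already built into the very statement of Proposition~\ref{prop:UnderlyingCpCoAddition}, so nothing is missing.
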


\begin{proposition}\label{prop:FixedCpCoAddition}
Consider the coaddition map
\[
\mA^{\cO}[x_{\ast}]\to\mA^{\cO}[y_{\ast},z_{\ast}].
\]
For the target, let $n_\ast^y$ and $n_\ast^z$ denote the corresponding norm classes in the pushout description. Then in terms of these pushout generators, the underlying coaddition is given by:
\begin{align*}
x_{\ast}&\mapsto y_{\ast}+z_{\ast} \\
n_\ast&\mapsto n_\ast^{y}+n_\ast^{z}+t(y_{\ast}{z}_{\ast}),
\end{align*}
where $t\in\mA(C_2/C_2)$ is the class represented by the $C_2$-set $C_2$.
\end{proposition}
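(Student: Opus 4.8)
The plan is to mirror the proof of Proposition~\ref{prop:UnderlyingCpCoAddition}, now pushing the norm-of-a-sum formula through the pushout description of Proposition~\ref{prop:FixedATamb}. By Definition~\ref{def:coRing} the coaddition on $\mA^{\cO'}\![x_{\ast}]$ is the map of $\cO'$-Tambara functors $\mA^{\cO'}\![x_{\ast}]\to\mA^{\cO'}\![y_{\ast},z_{\ast}]$ adjoint to $y_{\ast}+z_{\ast}$; after applying ${\OpRes}$ it becomes the map of Green functors under consideration, which in the pushout coordinates of Proposition~\ref{prop:FixedATamb} sends the generator of the same name by $x_{\ast}\mapsto y_{\ast}+z_{\ast}$ essentially by definition. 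So the only content is the image of the class $n_{\ast}$.

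To compute that, I would invoke the last clause of Proposition~\ref{prop:FixedATamb}: in the unforgotten Tambara functor $\mA^{\cO'}\![x_{\ast}]$ one has $n_{\ast}=N_e^{C_2}\big(res_e^{C_2}x_{\ast}\big)$, and the coaddition is a map of $\cO'$-Tambara functors \emph{before} forgetting, hence commutes with $N_e^{C_2}$ and $res_e^{C_2}$. Therefore $n_{\ast}\mapsto N_e^{C_2}\big(res_e^{C_2}y_{\ast}+res_e^{C_2}z_{\ast}\big)$, and Lemma~\ref{lem:NormofSum} applied with $a=res_e^{C_2}y_{\ast}$ and $b=res_e^{C_2}z_{\ast}$ rewrites this as $n_{\ast}^{y}+n_{\ast}^{z}+tr_e^{C_2}\big(res_e^{C_2}y_{\ast}\cdot\overline{res_e^{C_2}z_{\ast}}\big)$, using $N_e^{C_2}\big(res_e^{C_2}y_{\ast}\big)=n_{\ast}^{y}$ and similarly for $z$.

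The remaining step — the only place the computation departs from that of Proposition~\ref{prop:UnderlyingCpCoAddition} — is to simplify the transfer term. Since $z_{\ast}$ is a fixed class, its restriction $res_e^{C_2}z_{\ast}$ is Weyl-invariant, so the conjugation bar drops out; then multiplicativity of restriction together with Frobenius reciprocity gives
\[
tr_e^{C_2}\big(res_e^{C_2}y_{\ast}\cdot res_e^{C_2}z_{\ast}\big)=tr_e^{C_2}\big(res_e^{C_2}(y_{\ast}z_{\ast})\big)=tr_e^{C_2}(1)\cdot(y_{\ast}z_{\ast})=t\cdot(y_{\ast}z_{\ast}),
\]
since $t=[C_2]=tr_e^{C_2}(1)$ in $\mA(C_2/C_2)$. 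Assembling the pieces yields $n_{\ast}\mapsto n_{\ast}^{y}+n_{\ast}^{z}+t(y_{\ast}z_{\ast})$, as claimed. I do not expect a genuine obstacle; the one subtlety worth stating carefully is that the identity $n_{\ast}=N_e^{C_2}res_e^{C_2}x_{\ast}$ and the compatibility of the coaddition with norms and restrictions both live in the $\cO'$-Tambara category, so the image of $n_{\ast}$ must be computed there and only afterwards transported along ${\OpRes}$.
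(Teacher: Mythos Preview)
Your proposal is correct and follows essentially the same approach as the paper. The paper's proof is terser, singling out only the one new ingredient beyond Proposition~\ref{prop:UnderlyingCpCoAddition}, namely the identity $tr_e^{C_2}res_e^{C_2}a=t\cdot a$; your argument spells out the surrounding steps (applying Lemma~\ref{lem:NormofSum} and using Weyl-invariance of $res_e^{C_2}z_{\ast}$) and arrives at the same identity in the same way.
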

\begin{proof}
The only surprising addition at this point is the element $t$ in the Burnside ring. This simply records a universal formula relating the restriction and transfer to the module structure over the Burnside Mackey functor:
\[
tr_e^{C_2} res_e^{C_2}a=t\cdot a.
\]
\end{proof}

\begin{corollary}\label{cor:FixedAddition}
In the notation of Corollary~\ref{cor:RightAdjointSet}, the fixed addition is given by
\[
(n,x_\ast)+(n',x_\ast')=\big(n+n'+tx_\ast\bar{x}_\ast', x_\ast+x'_\ast \big).
\]
\end{corollary}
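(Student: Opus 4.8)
The plan is to read the formula directly off the co-addition computed in Proposition~\ref{prop:FixedCpCoAddition}, transported across the identification of Theorem~\ref{prop:ValuesofRightAdjoint}. That theorem gives a natural isomorphism $F_{\cO}(\cO'\!,\m{R})(C_2/C_2)\cong\OTamb\big(i^{\cO'}_{\cO}\mA^{\cO'}[x_\ast],\m{R}\big)$ under which the abelian group structure on the left is dual to $i^{\cO'}_{\cO}$ of the co-addition $\Delta_{+}\colon\mA^{\cO'}[x_\ast]\to\mA^{\cO'}[y_\ast,z_\ast]$. Thus if $(n,x_\ast)$ and $(n',x'_\ast)$ correspond to maps $\phi,\psi\colon i^{\cO'}_{\cO}\mA^{\cO'}[x_\ast]\to\m{R}$, their sum corresponds to the composite of $i^{\cO'}_{\cO}\Delta_{+}$ with the map $i^{\cO'}_{\cO}\mA^{\cO'}[y_\ast,z_\ast]\to\m{R}$ which is $\phi$ on the $y$-variable and $\psi$ on the $z$-variable.

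First I would unwind the dictionary of Corollary~\ref{cor:RightAdjointSet} together with the pushout presentation of Proposition~\ref{prop:FixedATamb}: the map classifying $(n,x_\ast)$ sends the pushout generator $n_\ast$ to $n\in\m{R}(C_2/C_2)$ and the pushout generator $x_\ast$ to $x_\ast\in\m{R}(C_2/C_2)$, the pullback constraint $\res_e^{C_2}(n)=(\res_e^{C_2}x_\ast)^2$ being precisely the compatibility needed to define a map out of the pushout. Then I would substitute the formulas of Proposition~\ref{prop:FixedCpCoAddition}: the generator $x_\ast$ maps to $y_\ast+z_\ast$, hence to $x_\ast+x'_\ast$, which is the second coordinate of the sum; the generator $n_\ast$ maps to $n_\ast^{y}+n_\ast^{z}+t(y_\ast z_\ast)$, hence to $n+n'+t\,x_\ast x'_\ast$, which is the first coordinate. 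Matching these against the ordered-pair convention (first coordinate from the $n_\ast$-generator, second from the $x_\ast$-generator) gives the claimed formula.

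The one point worth a sentence is cosmetic: the statement is written with a Weyl conjugate $\bar{x}'_\ast$, for parallelism with the underlying addition of Corollary~\ref{cor:UnderlyingAddition}, but here $x_\ast,x'_\ast\in\m{R}(C_2/C_2)$, where the Weyl action is trivial, so $t\,x_\ast\bar{x}'_\ast=t\,x_\ast x'_\ast$. I do not expect a real obstacle in this corollary: all of the genuine content—the appearance of the error term $t\,x_\ast x'_\ast$—is already packaged in Proposition~\ref{prop:FixedCpCoAddition}, which itself rests on the universal norm-of-a-sum formula (Lemma~\ref{lem:NormofSum}) and the identity $\tr_e^{C_2}\res_e^{C_2}a=t\cdot a$; what remains here is the purely formal translation of that co-addition formula through the Yoneda/adjunction isomorphism of Theorem~\ref{prop:ValuesofRightAdjoint}.
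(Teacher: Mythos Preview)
Your proposal is correct and matches the paper's approach: the corollary is stated without proof, as an immediate dualization of Proposition~\ref{prop:FixedCpCoAddition} via Theorem~\ref{prop:ValuesofRightAdjoint}, and your write-up simply spells out that translation (including the harmless point that $\bar{x}'_\ast=x'_\ast$ at level $C_2/C_2$).
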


\bibliographystyle{amsalpha}

\bibliography{Noo}

\providecommand{\bysame}{\leavevmode\hbox to3em{\hrulefill}\thinspace}
\providecommand{\MR}{\relax\ifhmode\unskip\space\fi MR }
\providecommand{\MRhref}[2]{%
  \href{http://www.ams.org/mathscinet-getitem?mr=#1}{#2}
}
\providecommand{\href}[2]{#2}
\begin{thebibliography}{Tam93}

\bibitem[BH15]{BHNinfty}
Andrew~J. Blumberg and Michael~A. Hill, \emph{Operadic multiplications in
  equivariant spectra, norms, and transfers}, Adv. Math. \textbf{285} (2015),
  658--708.

\bibitem[BH18]{BHOTamb}
\bysame, \emph{Incomplete {T}ambara functors}, Alg. and Geom. Top. (2018).

\bibitem[BP17]{BonventrePer}
Peter Bonventre and Lu\'is~A. Pereira, \emph{Genuine equivariant operads},
  arxiv.org: 1707.02226, 2017.

\bibitem[Bru07]{Brun}
M.~Brun, \emph{Witt vectors and equivariant ring spectra applied to cobordism},
  Proc. Lond. Math. Soc. (3) \textbf{94} (2007), no.~2, 351--385. \MR{2308231}

\bibitem[GW17]{GutierrezWhite}
Javier~J. Guti\'errez and David White, \emph{Encoding equvariant commutativity
  via operads}, arxiv.org: 1707.02130, 2017.

\bibitem[Maz15]{MazurArxiv}
Kristen Mazur, \emph{An equivariant tensor product on {M}ackey functors},
  arxiv.org: 1508.04062, 2015.

\bibitem[Rub17]{Rubin}
Jonathan Rubin, \emph{On the realization problem for ${N}_{\infty}$ operads},
  arxiv.org: 1705.03585, 2017.

\bibitem[Tam93]{Tambara}
D.~Tambara, \emph{On multiplicative transfer}, Comm. Algebra \textbf{21}
  (1993), no.~4, 1393--1420. \MR{1209937 (94a:19002)}

\end{thebibliography}
\end{document}